\def\ov#1{{\overline{#1}}}
\def\wt#1{{\widetilde{#1}}}
\newcommand{\relint}{\operatorname{relint}}
\newcommand{\vol}{\operatorname{vol}}
\renewcommand{\and}{{\quad \text{ and } \quad}}
\newcommand{\hypo}{\operatorname{hypo}}
\newcommand{\CPA}{\operatorname{CPA}}
\newcommand{\PA}{\operatorname{PA}}
\newcommand{\Sk}{\operatorname{Sk}}
\newcommand{\SCPA}{\operatorname{SCPA}}
\newcommand{\WCPA}{\operatorname{WCPA}}
\newcommand{\Hom}{\operatorname{Hom}}
\newcommand{\id}{\operatorname{id}}
\newcommand{\cl}{{\operatorname{cl}}}
\newcommand{\hooklongrightarrow}{\lhook\joinrel\longrightarrow}
\newcommand{\Lip}{\operatorname{Lip}}
\def \B{\mathbb{B}}
\def \R{\mathbb{R}}
\def \Z{\mathbb{Z}}
\def\cV {{\mathcal V}}
\numberwithin{equation}{section}
\theoremstyle{definition}
\newtheorem{defn}[equation]{Definition}
\newtheorem{rem}[equation]{Remark}
\newtheorem{exmpl}[equation]{Example}
\theoremstyle{plain}
\newtheorem{lem}[equation]{Lemma}
\newtheorem{prop}[equation]{Proposition}
\newtheorem{thm}[equation]{Theorem}
\newtheorem{cor}[equation]{Corollary}
\newtheorem{prop-def}[equation]{Proposition-Definition}
\begin{document}

\title{Convex analysis on polyhedral spaces}

%---
\author[Botero]{Ana Mar\'ia Botero}
\address{Fachbereich Mathematik, Universit\"at
  Regensburg. Universit\"atsstr. 31, 93053 Regensburg, Deutschland}
\email{Ana.Botero@mathematik.uni-regensburg.de}
\urladdr{\url{https://homepages.uni-regensburg.de/~boa15169/}}

\author[Burgos Gil]{Jos\'e Ignacio Burgos Gil}
\address{Instituto de Ciencias Matem\'aticas (CSIC-UAM-UCM-UCM3).
  Calle Nicol\'as Ca\-bre\-ra~15, Campus UAB, Cantoblanco, 28049 Madrid,
  Spain} 
\email{burgos@icmat.es}
\urladdr{\url{http://www.icmat.es/miembros/burgos}}

\author[Sombra]{Mart{\'\i}n~Sombra} 
\address{Instituci\'o Catalana de Recerca
  i Estudis Avan\c{c}ats (ICREA). Passeig Llu{\'\i}s Companys~23,
  08010 Barcelona, Spain  \vspace*{-2.5mm}} 
\address{Departament de Matem\`atiques i
  Inform\`atica, Universitat de Barcelona. Gran Via 585, 08007
  Bar\-ce\-lo\-na, Spain} 
\email{sombra@ub.edu}
\urladdr{\url{http://www.maia.ub.edu/~sombra}}

\date{\today} \subjclass[2010]{Primary 26B25; Secondary 52B70, 14T05.}
\keywords{Convex analysis, polyhedral spaces, tropical geometry.}
\thanks{Botero was partially supported by the SFB Higher Invariants at
  the University of Regensburg. Burgos was partially supported by the
  MINECO research projects MTM2016-79400-P and PID2019-108936GB-C21
  and by the Severo Ochoa 
  programs for centers of excellence SEV-2015-0554 and
  CEX2019-000904-S (ICMAT Severo
  Ochoa).  Sombra was partially supported by the MINECO research
  projects MTM2015-65361-P and PID2019-104047GB-I00 and by the Mar{\'\i}a
  de Maeztu program for units of excellence MDM-2014-0445 (BGSMath
  Mar\'ia de Maeztu).}

\begin{abstract}
  We introduce notions of concavity for functions on balanced polyhedral
  spaces, and we show that concave functions on such spaces satisfy
  several strong continuity properties.
\end{abstract} 

\maketitle

\vspace{-8mm}

\tableofcontents

\section[Introduction]{Introduction}
\label{sec:introduction}

Convex analysis studies properties of convex functions and convex
sets. The notion of convexity is a simple and natural one which has
been increasingly important in both pure and applied mathematics.  One
of the main advantages of working with convex functions is that they
have nice topological properties. For instance, a convex function on a
open subset $U\subset\R^{n}$ is continuous on $U$, and Lipschitz
continuous on every compact subset of it. The aim of this article is
to transfer the notion of convexity of functions to balanced
polyhedral spaces, and to extend to this setting the strong continuity
properties of convex functions on open subsets of~$\R^{n}$.

There are algebraic objects to which one can naturally attach a
balanced polyhedral space. The results of this article can be applied
directly to such spaces, and we hope that this serves to a better
understanding of the algebraic objects involved. Important examples
arise in tropical geometry. Here, the procedure of tropicalization
attaches a tropical cycle to an algebraic cycle, and the tropical
cycle has a structure of a balanced polyhedral space \cite{AR}.

Related examples arise in the theory of toroidal embeddings. As a
particular case, to a pair $(X, D)$ consisting of an algebraic variety
$X$ and a simple normal crossings divisor $D$ on $X$ one can associate
its Clemens complex (also called the dual complex), which is also a
balanced polyhedral space \cite{GR, BB19:TbMA}.

In \cite{AR}, Allerman and Rau developed an intersection theory
between piecewise affine functions and tropical cycles. This theory
was recently extended by Gross to a tropical intersection theory on
weakly embedded conical polyhedral complexes associated to toroidal
embeddings, between \emph{combinatorially principal} piecewise affine
functions and tropical cycles \cite{GR}.

A \emph{balanced polyhedral space} $X$ is an (abstract) polyhedral
space of pure dimension $n$, which we equip with additional structure
that allows to define a balancing condition. More precisely, it can be
identified with a $5$-tuple $(X, \Pi, N, \iota, b)$, where $X$ is a
second countable $n$-dimensional topological space, $\Pi $ is a
polyhedral complex on $X$, $N$ is a Euclidean vector space, $\iota$ is
a continuous map $\iota \colon X \to N$ whose restriction to each
polyhedron $\sigma \in \Pi $ is injective and affine, and $b$ is an
$n$-dimensional Minkowski weight on $\Pi$ satisfying $b(\sigma) > 0$
for every $n$-dimensional cone $\sigma \in \Pi$. The map $\iota$ is
called the quasi-embedding and $b$ the balancing condition
(Definition~\ref{def:balanced}).

The additional structure considered in the present article differs
from the ones considered in \cite{AR} and \cite{GR} in the following
two aspects: on the one side, the authors in \emph{loc. cit.} consider
an integral structure whereas in this article, a Euclidean structure
is considered. This turns out to be more convenient when dealing with
convexity notions. On the other side, a tropical variety as in
\cite{AR} comes with an embedding into a real vector space, while a
weak embedding as in \cite{GR} does not need to be injective when
restricted to a cone. By contrast, a quasi-embedding does not need to
be globally injective but its restriction to every polyhedron is
assumed to be injective.  A consequence of this difference is that we
do not have to care about piecewise affine functions being
combinatorially principal when doing intersection theory. Indeed,
injectivity on each polyhedra implies that any piecewise affine
function on the complex is combinatorially principal.

We will work with \emph{concave} functions instead of
convex ones, because the applications we have in mind come from
toroidal geometry and positive divisors in this theory are described
by concave functions on balanced polyhedral spaces~\cite{BB19:TbMA}.
Indeed, the functions associated to positive divisors in toric geometry
are concave in the sense of convex analysis, although they are usually
called convex in the toric literature \cite{Fulton:itv}.

A first approach to concavity on polyhedral spaces is to declare that
a function $f$ on a balanced polyhedral space $X$ is concave if it is
the pullback by the quasi-embedding $\iota$ of a concave function on
the Euclidean space $N$. But this notion of concavity is not optimal
because it is not local (Example \ref{exm:8}).  The variant that
declares $f$ to be concave if it is locally of the above type, is not
stable under the operation of taking the infimum of a family of such
functions. More crucially, given a toroidal embedding, the piecewise
affine function on the Clemens complex associated to a nef toroidal
divisor on the algebraic variety is not necessarily concave in this
sense.

Hence it makes sense to explore other notions of concavity. In this
article, we give several notions of concavity in terms of convex
combinations.  A \emph{convex combination} of points in $X$ is a
triple
  \begin{displaymath}
   (x,\{x_{i}\}_{i\in I}, \{\nu _{i}\}_{i\in I})  
 \end{displaymath}
 where $I$ is a finite set, $x\in X$ is the central point, the
 $x_{i}$'s are also points in $ X$ and the $\nu _{i}$'s are
 nonnegative real numbers such that
 \begin{displaymath}
\sum_{i\in I} \nu _{i}=1 \and   \iota (x)=\sum_{i\in I}\nu _{i}\, \iota(x_{i}).
 \end{displaymath}
 A convex combination is \emph{polyhedral} if there is a polyhedral
 structure $\Pi $ on $X$ such that the central point $x$ belongs to a
 polyhedron $\tau $ and the remaining points belong to polyhedra
 having $\tau $ as a face.  Finally, a convex combination is
 \emph{balanced} if it is polyhedral and, roughly speaking, the
 location of the $x_{i}$'s is dictated by the balancing condition
 (Definition \ref{def:5}).

Then, a function $f$ on $X$ is  \emph{strongly
  concave} if for all convex 
combinations, the concavity condition
\begin{equation}
  \label{eq:34}
  f(x)\ge \sum_{i\in I}\nu _{i} \, f(x_{i})
\end{equation}
is satisfied. This is equivalent to asking that $f$ is the pullback by
the quasi-embedding $\iota$ of a concave function on the Euclidean
space $N$ (Proposition \ref{prop:17}).  The function $f$ is 
\emph{concave} if the concavity condition \eqref{eq:34} is satisfied
for all polyhedral convex combination, and it is \emph{weakly
  concave} if this concavity condition is only assumed to hold for
balanced convex combinations (Definition \ref{def:1}).

Clearly, strongly concave functions are concave and, in turn, concave
functions are weakly concave. The three notions of concavity are
different (Examples \ref{exm:4}, \ref{exm:7} and \ref{exm:9}) but,
when restricted to $\R^{n}$, they all agree with the usual one.
Moreover, in the piecewise affine case, these notions of concavity can
be reformulated in terms of intersection products with Minkowski
cycles (Proposition \ref{prop:17}).

The main results of this article show that weakly concave functions
(hence also concave and strongly concave functions) on a balanced polyhedral
space satisfy the same nice continuity properties satisfied by concave
functions on $\R^{n}$.

The first result in this direction is Theorem \ref{thm:2} that states
that, if $f$ is a weakly concave function on an open subset
$U\subset X$, then it is continuous. Even more, in Corollary
\ref{cor:lipschitz} we see that such a function is Lipschitz
continuous in every compact subset $K\subset U$. More precisely, the
Lipschitz constant of $f$ in $K$ can be bounded in terms of the
sup-norm of the function on a slightly bigger open subset
(Theorem~\ref{thm:3}).

Once we have proven Lipschitz continuity on compact subsets, we can
prove some strong uniformity and convergence results for families of
weakly concave functions (Theorems \ref{thm:conv1}, \ref{thm:4} and
\ref{thm:conv2}). For instance, Theorem \ref{thm:4} states that, if
$(f_{i})_{i\ge 0}$ is a sequence of weakly concave functions that
converge pointwise on a dense subset to finite limits, then the
sequence of functions converges pointwise to a weakly concave function
and the convergence is uniform on compact subsets.

As we mentioned earlier, our motivation comes from toroidal geometry.
In the subsequent article by the first and the second authors
\cite{BB19:TbMA}, the strong continuity properties of concave
functions in open subsets of polyhedral complexes are used to show
that the degree of a nef toroidal b-divisor is well-defined and to
prove a Hilbert-Samuel type theorem for such b-divisors.  However, we
believe that the results of this article are general enough to be of
interest in other areas of mathematics.

The article is organized as follows. In Section
\ref{sec:polyhedral-complexes} we gather several definitions
concerning polyhedral spaces, including the key notions of
quasi-embedded and of Euclidean polyhedral spaces and complexes. We
define morphisms between these spaces and show that they satisfy nice
topological properties (Proposition \ref{prop:8}).

In Section \ref{sec:mink-weights-inters} we introduce the space of
Minkowski weights on a Euclidean polyhedral complex (Definition
\ref{def:4}) and the space of Minkowski cycles on a Euclidean
polyhedral space (Definition \ref{def:14}). We also introduce
{balanced polyhedral spaces}, that is, Euclidean polyhedral spaces
endowed with a full-dimensional strictly positive Minkowski cycle
(Definition \ref{def:balanced}). We end this section by defining an
intersection product between Minkowski cycles and piecewise affine
functions, and we show that it is well-defined and symmetric
(Proposition \ref{prop:4}). This intersection product is the Euclidean
translation of that in \cite{Rau} and \cite{GR}.

In Section \ref{sec:conc-conv-funct} we discuss different notions of
concavity for piecewise affine functions related to preserving
the positivity of the intersection product. We end this section by
introducing the notions of regular polyhedral structures on a balanced
polyhedral space $X$ and of
strictly concave function. This is inspired by the
correspondence between normal fans of polytopes and projective toric
varieties. We then show the analogue of the toric Chow lemma from
toric geometry, providing existence of regular structures whenever $X$
satisfies a natural finiteness condition (Theorem~\ref{prop:projective}).

In Section \ref{sec:conc-funct-polyh} we discuss the different notions
of concavity of functions on $X$ that are not necessarily piecewise
affine, in terms of convex combinations of points. We show that in the
piecewise affine case, these notions agree with those introduced in
the previous section (Proposition \ref{prop:17}).

Finally, in Section \ref{sec:cont-prop} we prove the strong continuity
properties of weakly concave (and hence also of concave and strongly
concave) functions on a balanced polyhedral space.

\medskip \noindent {\bf Acknowledgments.} We are grateful to
  the anonymous referee for all her/his constructive remarks.

\section{Polyhedral spaces}
\label{sec:polyhedral-complexes}

In this section we gather several definitions concerning polyhedral
and complexes. For convex sets and functions we will use the notations
and definitions in \cite{ROCK}. In particular, the empty set
  is a face of every convex set.

\begin{defn} \label{def:10} Let $X$ be a second countable topological
  space.  A \emph{polyhedral structure} on $X$ is a pair
\[
\Pi = (\{\sigma_{\alpha}\}_{\alpha \in \Lambda},
  \{M_{\alpha} \}_{\alpha \in \Lambda})
\]
consisting of
\begin{enumerate}
\item \label{item:21} a collection
  $\{\sigma_{\alpha}\}_{\alpha \in \Lambda}$ of different closed
  subsets giving a locally finite covering of~$X$,
\item \label{item:22} a collection
  $\{M_{\alpha} \}_{\alpha \in \Lambda}$ where each $M_{\alpha}$ is a
  finite dimensional vector space of continuous $\R$-valued functions on
  $\sigma_{\alpha}$ such that, setting
  $N_{\alpha }= \Hom(M_{\alpha}, \R)$, the evaluation map
  \begin{displaymath}
\phi_{\alpha} \colon \sigma_{\alpha} \longrightarrow  N_{\alpha }
  \end{displaymath}
 sends $\sigma_{\alpha}$ homeomorphically onto a
    full-dimensional convex polyhedron of a hyperplane $H_{\alpha}$ of
    this dual vector space not passing through the origin.
\end{enumerate}
The closed subsets $\sigma_{\alpha}$ are the \emph{polyhedra} of
$\Pi$, and the preimages under $\phi_{\alpha}$ of the faces of
$\phi_{\alpha}(\sigma_{\alpha})$ are the \emph{faces} of
$\sigma _{\alpha }$. We assume that the pair $\Pi$ satisfies the
conditions:
\begin{enumerate}
  \setcounter{enumi}{2}
\item\label{item:evaluationmap} every face of a polyhedron
  $\sigma_{\alpha}$ of $\Pi$ is a polyhedron $\sigma_{\alpha'}$ of
  $\Pi$ for an index $\alpha' \in \Lambda$, and the corresponding
  vector space $M_{\alpha'}$ consists of the restriction to
  $\sigma_{\alpha'}$ of the functions in $M_{\alpha}$,
\item \label{item:20} every two polyhedra of $\Pi$  intersect
  in a common face (possibly the empty one). 
\end{enumerate}
  A polyhedral structure is \emph{finite} if its set of polyhedra is
  finite.
\end{defn}

For simplicity, we identify a polyhedral structure $\Pi $ with its
underlying set of polyhedra.  For a polyhedron $\sigma\in \Pi$ we
write
\begin{displaymath}
M_{\sigma}, \quad N_{\sigma}, \quad H_{\sigma} \and \phi_{\sigma}
\end{displaymath}
for its corresponding vector space, dual space, hyperplane and
evaluation map, respectively.

Identifying each $\sigma\in \Pi$ with its image in $N_{\sigma }$, we
can translate to it the objects and notions of polyhedra in vector
spaces. In particular, we denote by $\relint (\sigma ) $ the
\emph{relative interior} of $\sigma$, that is, the preimage under
$\phi_{\sigma }$ of the interior of the polyhedron
$\phi_{\sigma }(\sigma) \subset N_{\sigma }$.  Likewise the affine
structure of $N_{\sigma }$ gives an affine structure on $\sigma$, and
therefore we can talk about affine maps between polyhedra.

\begin{rem}
    The condition \eqref{item:22} in Definition \ref{def:10} implies
    that the space of affine functions on $\sigma$ coincides with
    $M_{\sigma}$.
  \end{rem}

\begin{defn}
  \label{def:16}
  Let $X$ be a second countable topological space and $\Pi, \Pi'$ 
  two polyhedral structures on $X$. Then $\Pi '$ is a \emph{subdivision} of $\Pi $, denoted by
  $\Pi '\ge \Pi$, if for every $\sigma' \in \Pi'$ there
  exists a $\sigma \in \Pi$ with
\begin{math}
  \sigma'\subset \sigma,
\end{math}
the inclusion being an affine map.  Two polyhedral structures on $X$
are \emph{equivalent} if they admit a common subdivision.
\end{defn}

  \begin{prop}\label{prop:2}
  Let $X$ be a second countable topological space. Then
  \begin{enumerate}
  \item \label{item:25} the relation $\ge$ is a partial order on the
    set of polyhedral structures on $X$,
  \item \label{item:23} the subdivisions of a given polyhedral
    structure on $X$ form a directed set,
  \item \label{item:24} ``being equivalent'' is an equivalence
    relation between polyhedral structures on $X$.
  \end{enumerate}  
\end{prop}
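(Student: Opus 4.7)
The plan is to prove the three claims in the order given, since (3) will follow formally from (1) and (2).

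For part (1), reflexivity and transitivity are direct from Definition \ref{def:16}: the identity map of each polyhedron is affine, and the composition of two affine inclusions is affine. Antisymmetry requires work, and for it I will first establish an auxiliary fact: if $\Pi' \ge \Pi$, then every $\tau \in \Pi$ equals the union $\bigcup\{\sigma \in \Pi' : \sigma \subset \tau\}$. Given $y \in \tau$, pick the unique $\sigma \in \Pi'$ with $y \in \relint(\sigma)$ and the unique $\tau_y \in \Pi$ with $y \in \relint(\tau_y)$. Applying axiom (4) of Definition \ref{def:10} in $\Pi$ to the pair $\tau, \tau'$, where $\tau' \in \Pi$ is any polyhedron witnessing $\sigma \subset \tau'$ via $\Pi' \ge \Pi$, one checks that $\tau_y$ is a face of both $\tau$ and $\tau'$. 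A tangent cone argument inside $N_{\tau'}$ then forces $\aff(\sigma) \subset \aff(\tau_y)$, since every direction of $\aff(\sigma)-y$ and its negative must lie in the tangent cone of $\tau'$ at the point $y \in \relint(\tau_y)$, whose lineality space is exactly $\aff(\tau_y)-y$. Combined with the standard identity $\aff(\tau_y) \cap \tau' = \tau_y$ for a face, this yields $\sigma \subset \tau_y \subset \tau$. With this auxiliary claim in hand, antisymmetry is a short sandwich argument: assuming $\Pi_1 \ge \Pi_2$ and $\Pi_2 \ge \Pi_1$, take $\tau \in \Pi_2$ and $x \in \relint(\tau)$, find $\sigma \in \Pi_1$ with $x \in \sigma \subset \tau$, then $\tau' \in \Pi_2$ with $x \in \tau' \subset \sigma$; axiom (4) in $\Pi_2$ together with $x \in \relint(\tau)$ forces $\tau \subset \tau'$, and the chain $\tau \subset \tau' \subset \sigma \subset \tau$ collapses to $\tau = \sigma \in \Pi_1$. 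By symmetry this gives $\Pi_1 = \Pi_2$.

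For part (2), given $\Pi_1, \Pi_2 \ge \Pi$, I will construct a common refinement by gluing local refinements indexed by $\Pi$. For each $\sigma \in \Pi$, the auxiliary claim turns the subcollections $\{\sigma_i \in \Pi_i : \sigma_i \subset \sigma\}$, for $i=1,2$, into classical polyhedral subdivisions of $\sigma$ inside $N_\sigma$, and I take their usual common refinement, whose maximal cells are the nonempty intersections $\sigma_1 \cap \sigma_2$. Compatibility on a shared face $\tau = \sigma \cap \sigma'$ of $\Pi$ comes from axiom (3) of Definition \ref{def:10} applied in $\Pi_1$ and $\Pi_2$: if $\sigma_1 \in \Pi_1$ is contained in $\sigma$, then $\sigma_1 \cap \tau$ is a face of $\sigma_1$, cut out by the affine equations defining $\tau$ as a face of $\sigma$, hence itself a polyhedron of $\Pi_1$ lying in $\tau$. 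Thus the local refinements restrict consistently to $\tau$ and glue into a polyhedral structure $\Pi_3$ on $X$ that by construction satisfies $\Pi_3 \ge \Pi_1$ and $\Pi_3 \ge \Pi_2$.

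For part (3), reflexivity holds because $\Pi$ itself is a common subdivision of $\Pi$ with itself, and symmetry is immediate from Definition \ref{def:16}. For transitivity, if $\Pi \sim \Pi'$ via a common subdivision $\Pi_{12}$ and $\Pi' \sim \Pi''$ via $\Pi_{23}$, then $\Pi_{12}, \Pi_{23} \ge \Pi'$, so by (2) they admit a common subdivision $\Pi_{123}$, which by transitivity of $\ge$ established in (1) refines both $\Pi$ and $\Pi''$. The main anticipated obstacle is the tangent cone step in the auxiliary claim, namely verifying that whenever the relative interior of $\sigma \in \Pi'$ meets $\relint(\tau_y)$ inside an enclosing $\tau' \in \Pi$, the polyhedron $\sigma$ must lie entirely in the face $\tau_y$. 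Once that is settled, every remaining step is a routine application of the face axioms of Definition \ref{def:10} and of local finiteness of the covering.
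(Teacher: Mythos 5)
Your proof is correct and follows the same broad skeleton as the paper's: part (2) builds the common refinement out of pairwise intersections $\tau_1 \cap \tau_2$, and part (3) is deduced formally from (1) and (2). Where you genuinely diverge is part (1). The paper dispatches antisymmetry in a single sentence, appealing to the fact that the polyhedra within one structure are distinct; as stated this does not actually explain why mutual subdivision forces the two structures to coincide. You fill the gap with a real argument: the auxiliary claim that $\Pi' \ge \Pi$ forces every $\tau \in \Pi$ to decompose as $\bigcup\{\sigma \in \Pi' : \sigma \subset \tau\}$, proved via the face axiom (4), uniqueness of the cell whose relative interior contains a given point, and the tangent-cone/lineality-space observation inside $N_{\tau'}$ that gives $\aff(\sigma) \subset \aff(\tau_y)$. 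The subsequent sandwich $\tau \subset \tau' \subset \sigma \subset \tau$ correctly collapses to $\tau = \sigma$, and this is sound. One small omission worth noting: after showing $\Pi_1$ and $\Pi_2$ have the same underlying set of polyhedra you should also record that the vector spaces $M_\alpha$ coincide; this is immediate because a mutual affine inclusion of a polyhedron into itself forces the two affine structures (hence the spaces of affine functions) to agree, but Definition \ref{def:16} does implicitly impose a condition on the $M$'s that needs to be matched. Your local-gluing presentation of (2) produces the same set $\{\tau_1 \cap \tau_2\}$ as the paper's direct global construction, and your use of axiom (3) to check compatibility of the local refinements across a shared face is the right verification; just be aware that one should also confirm local finiteness of the glued collection, which follows since it is bounded above by products of the local finiteness counts of $\Pi_1$ and $\Pi_2$.
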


\begin{proof}
  The statement in \eqref{item:25} is clear from the definitions. In
  particular, the antisymmetric property follows from the fact that
  the polyhedra in a given polyhedral structure are different, and so
  two polyhedral structures that subdivide each other are necessarily
  equal.

  To prove the statement in \eqref{item:23}, let $\Pi$ be polyhedral
  structure on $X$, and let $\Pi_{1}$ and $\Pi_{2}$ be two
  subdivisions of $\Pi$.  For $i=1,2$ and each $\tau_{i}\in \Pi_{i}$
  there is $\sigma_{i}\in \Pi$ such that $\tau_{i}\subset
  \sigma_{i}$. We have that $\sigma=\sigma_{1}\cap \sigma_{2}$ is a
  polyhedron of $\Pi$ containing the intersection
  $ \tau_{1}\cap\tau_{2}$, and for each $i$ we have that
  $\tau_{i}\cap \sigma$ is a polyhedron of the vector space
  $N_{\sigma}$. Hence
  \begin{displaymath}
    \tau_{1}\cap\tau_{2} = (\tau_{1}\cap\sigma) \cap(\tau_{2}\cap\sigma)
  \end{displaymath}
  is also a polyhedron of $N_{\sigma}$, and its space of affine
  functions $ M_{\tau_{1}\cap\tau_{2}}$ consists of the functions in
  $M_{\sigma}$ restricted to it. The pair
  \begin{displaymath}
    \left(\{\tau_{1}\cap\tau_{2}\}_{\tau_{1}, \tau_{2}}, \{M_{\tau_{1}\cap\tau_{2}}\}_{\tau_{1}, \tau_{2}}\right)
  \end{displaymath}
  is a polyhedral structure on $X$ that subdivides both $\Pi_{1}$ and
  $\Pi_{2}$, and so the subdivisions of $\Pi$ form a directed set, as
  stated.
  
  The relation ``being equivalent'' is clearly both symmetric and and
  reflexive.  To check the transitivity property, let $\Pi $, $\Pi '$
  and $\Pi ''$ be three polyhedral structures on $X$ such that $\Pi '$
  and $\Pi ''$ are both equivalent to $\Pi $.  Then there are
  polyhedral structures $\Pi_{1}$ and $\Pi_{2}$ that are a common
  subdivision of $\Pi $ and $\Pi '$ and of $\Pi $ and $\Pi ''$,
  respectively.  Since $\Pi_{1}$ and $\Pi_{2}$ are subdivisions of
  $\Pi$, by \eqref{item:23} there is a further polyhedral structure
  $\Pi_{3}$ that subdivides both of them, and by \eqref{item:25} we
  have that $\Pi_{3}$ subdivides both $\Pi'$ and $\Pi''$. Hence the
  later polyhedral structures are equivalent, proving \eqref{item:24}.
\end{proof}

\begin{defn}
  A \emph{polyhedral space} $X$ is a second countable topological
  space equipped with an equivalence class of polyhedral structures. A
  \emph{polyhedral complex on} $X$ is the choice of a representative
  of the class of polyhedral structures of $X$. 
\end{defn}

\begin{rem}
By Proposition \ref{prop:2}, the set of polyhedral complexes on a
polyhedral space is a directed set ordered by subdivision.
\end{rem}

\begin{defn}
  The \emph{dimension} of a polyhedral space $X$ is defined
  as
  \begin{displaymath}
    \dim(X)=   \sup_{\sigma \in \Pi } \dim \left(M_{\sigma }\right)-1
  \end{displaymath}
  for any polyhedral complex $\Pi$ on $X$.  We say that $X$ has
  \emph{pure dimension $n$} when every polyhedron of $\Pi$ that is
  maximal (with respect to the inclusion) has dimension~$n$. These
  notions do not depend on the choice of $\Pi$.
\end{defn}

\begin{defn} \label{def:15} Let $X$ and $X'$ be polyhedral spaces.
  Given polyhedral complexes $\Pi$ on $X$ and $\Pi'$ on $X'$, a
  \emph{morphism of a polyhedral complexes} between $\Pi$ and $\Pi'$
  is a continuous map $f\colon X\to X'$ such that for every
  $\sigma \in \Pi$ there is $\sigma' \in \Pi'$ with
  $f(\sigma) \subset \sigma'$, and the restriction
  $f|_{\sigma} \colon \sigma \to \sigma'$ is an affine map.

  A \emph{morphism of polyhedral spaces} between $X$ and $X'$ is a
  continuous map $f\colon X\to X'$ that becomes a morphism of
  polyhedral complexes after a suitable choice of polyhedral complexes
  $\Pi$ on $X$ and $\Pi'$ on $X'$ as above.
\end{defn}

\begin{prop}\label{prop:8}
  The underlying topological space of a polyhedral space is Hausdorff.
\end{prop}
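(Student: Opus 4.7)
My plan is to separate two arbitrary distinct points $x,y\in X$ by constructing disjoint open neighborhoods polyhedron by polyhedron, exploiting two ingredients: the local finiteness of the covering $\{\sigma_{\alpha}\}_{\alpha\in\Lambda}$, and the fact that each polyhedron $\sigma\in\Pi$ is itself Hausdorff in its subspace topology, since $\phi_{\sigma}$ identifies $\sigma$ homeomorphically with a convex polyhedron in the finite-dimensional vector space $N_{\sigma}$.

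The first step will be a localization lemma: every $x\in X$ admits an open neighborhood $V_{x}\subset X$ meeting only polyhedra of $\Pi$ that contain $x$. To build $V_{x}$, I pick by local finiteness a neighborhood $W$ of $x$ meeting only finitely many polyhedra, and then remove from $W$ those among them that do not contain $x$; each such polyhedron is closed in $X$ by hypothesis, so what is left is open. A byproduct is that $S_{x}:=\{\sigma\in\Pi\colon x\in\sigma\}$ is finite, being contained in the finite set of polyhedra meeting $W$.

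With this in hand, fix $x\neq y$ and choose $V_{x}$, $V_{y}$ as above. For each $\sigma\in S_{x}\cap S_{y}$, I use the Hausdorff property of $\sigma$ to pick $\sigma$-open sets $U_{\sigma}\ni x$ and $U'_{\sigma}\ni y$ that are disjoint in $\sigma$. For $\sigma\in S_{x}\setminus S_{y}$ I simply set $U_{\sigma}:=\sigma$, and symmetrically for $\sigma\in S_{y}\setminus S_{x}$. Then define
\[
U \;=\; V_{x}\setminus\bigcup_{\sigma\in S_{x}}(\sigma\setminus U_{\sigma}), \qquad U' \;=\; V_{y}\setminus\bigcup_{\sigma\in S_{y}}(\sigma\setminus U'_{\sigma}).
\]
Each $\sigma\setminus U_{\sigma}$ is closed in $\sigma$, hence closed in $X$; the unions are finite, so $U$ and $U'$ are open. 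Clearly $x\in U$ and $y\in U'$, since $x\in V_{x}$ and $x\in U_{\sigma}$ for every $\sigma\in S_{x}$.

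The main step to check — and the place where the pieces need to click — is disjointness. Suppose $p\in U\cap U'$, and pick any $\sigma\in\Pi$ containing $p$. Since $p\in V_{x}$, the localization lemma forces $\sigma\in S_{x}$, and symmetrically $\sigma\in S_{y}$, so $\sigma\in S_{x}\cap S_{y}$. By construction $p\notin\sigma\setminus U_{\sigma}$, whence $p\in U_{\sigma}$, and likewise $p\in U'_{\sigma}$, contradicting $U_{\sigma}\cap U'_{\sigma}=\emptyset$. I expect no deep obstacle: the only slightly delicate point is the localization lemma, and once it is in place the construction and disjointness verification reduce to routine manipulations of closed subsets within a locally finite cover.
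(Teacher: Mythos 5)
Your proof is correct and follows essentially the same strategy as the paper's: use local finiteness to reduce to finitely many polyhedra, separate $x$ and $y$ within each polyhedron using its Hausdorffness, and combine. Your localization lemma and the case split on $S_x\cap S_y$ versus $S_x\setminus S_y$ just spell out carefully what the paper handles implicitly (the paper takes $U_\sigma=X$ or $V_\sigma=X$ when $x$ or $y$ is not in $\sigma$), so the two arguments differ only in presentation, not substance.
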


\begin{proof} Let $X$ be a polyhedral space and $\Pi $ a polyhedral
  complex on $X$.  Let $x, y\in X$ be two different points. Since the
  covering $\{\sigma\}_{\sigma \in \Pi}$ is locally finite, we can
  choose an open subset $W$ of $X$ containing both $x$ and $y$ and
  such that the set of polyhedra
\begin{displaymath}
  \Sigma=\{\sigma\in \Pi \mid \sigma \cap W \ne \emptyset\}
\end{displaymath}
is finite.

Each $\sigma\in\Sigma$ is homeomorphic to a polyhedron and so a
Hausdorff topological subset of $X$. Hence there are open subsets
$U_{\sigma}$ and $V_{\sigma} $ of $X$ that are disjoint on $ \sigma$,
and contain $x$ and $y$ respectively, and so
\begin{displaymath}
  W \cap \bigcap_{\sigma\in\Sigma} U_{\sigma} \and  W \cap \bigcap_{\sigma\in \Sigma} V_{\sigma}  
\end{displaymath}
are disjoint neighborhoods of $x$ and $y$ respectively, proving the
statement.
\end{proof}

A standard way to construct polyhedral spaces is by
gluing polyhedra through their faces, as we explain in the next
example. 

\begin{exmpl} 
  \label{exm:5}
  Let $\Lambda $ be a countable set.  For $\alpha \in \Lambda$, let
    $\Delta_{\alpha}$ be a polyhedron in a finite dimensional vector space $P_{\alpha}$. For each $\alpha,\beta \in \Lambda$, let
  $F_{\alpha,\beta }$ and $F_{\beta,\alpha}$ be faces of
  $ \Delta_{\alpha }$ and $ \Delta_{\beta }$ respectively (possibly
  the empty ones) and
\begin{displaymath}
  j_{\beta,\alpha }\colon F_{\alpha,\beta }\longrightarrow F_{\beta,\alpha }  
\end{displaymath}
 an affine isomorphism. We assume that this data verifies the conditions
\begin{enumerate}
\item \label{item:18} for $\alpha \in \Lambda $, $F_{\alpha ,\alpha }=\Delta_{\alpha
  }$ and $j_{\alpha ,\alpha }=\id_{\Delta _{\alpha }}$,
\item\label{item:19} for $\alpha ,\beta ,\gamma \in \Lambda $, we have that 
  \begin{math}
    j_{\beta,\alpha }(F_{\alpha ,\beta }\cap F_{\alpha,\gamma})=F_{\beta ,\alpha }\cap F_{\beta,\gamma }
  \end{math}
 and
\begin{displaymath}
  j_{\gamma ,\alpha }=j_{\gamma,\beta  }\circ j_{\beta
    ,\alpha} \text{ on } F_{\alpha ,\beta }\cap F_{\alpha,\gamma},
\end{displaymath}
\item \label{item:6} for $\alpha \in \Lambda $, the set $\{\beta \in \Lambda
  \mid F_{\beta ,\alpha}\not = \emptyset \}$ is finite. 
\end{enumerate}
Note that \eqref{item:18} and \eqref{item:19} imply that, for $\alpha
,\beta \in \Lambda $, we have that $j_{\alpha ,\beta
}=j_{\beta ,\alpha }^{-1}$,

For $x \in \Delta_{\alpha}$ and $y\in \Delta_{\beta}$, we set
$x\sim y$ whenever $x\in F_{\alpha,\beta }$, $y\in F_{\beta,\alpha}$
and $j_{\beta ,\alpha }(x)=y$. This defines an equivalence relation on
the disjoint union $\bigsqcup_{\alpha \in \Lambda} \Delta_{\alpha}$,
and we consider the quotient topological space
\begin{equation}\label{eq:25}
  Y = \Big( \bigsqcup_{\alpha \in \Lambda} \Delta_{\alpha} \Big) \Big/ \sim.
\end{equation}
Since $Y$ is equipped with the quotient topology, the map
$q\colon \bigsqcup_{\alpha \in \Lambda} \Delta_{\alpha}\to Y$ is
continuous.  The conditions above imply that the equivalence relation
$\sim$ is closed and that the map $q$ is open. By \cite[Chapter~I, \S
8.3, Proposition~8]{bourbaki-topologie-generale-1-4} we deduce that
$Y$ is Hausdorff. Moreover, the finiteness condition \eqref{item:6}
implies that the quotient map $q$ is proper.

The image in $Y$ of each $\Delta_{\alpha}$ is a closed subset that is
homeomorphic to it. Identifying each of these polyhedra with its image
in $Y$ and defining $M_{\alpha}$ as the space of affine functions on
$\Delta_{\alpha}$, we have that
\begin{equation}
  \label{eq:15}
\Gamma = (\{\Delta_{\alpha}\}_{\alpha \in \Lambda},
  \{M_{\alpha} \}_{\alpha \in \Lambda})
\end{equation}
is a polyhedral structure on $Y$ in the sense of Definition
\ref{def:10}. Indeed, the condition~\eqref{item:6} implies
that the covering $\{\Delta_{\alpha}\}_{\alpha \in \Lambda}$ is
locally finite.
\end{exmpl}

\begin{prop}\label{prop:10}
  Every polyhedral complex on a polyhedral space is isomorphic to one
  constructed gluing polyhedra through faces as in Example
  \ref{exm:5}.
\end{prop}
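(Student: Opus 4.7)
The plan is to exhibit the polyhedral complex $\Pi=(\{\sigma_\alpha\}_{\alpha\in\Lambda},\{M_\alpha\}_{\alpha\in\Lambda})$ as the output of a gluing construction in the sense of Example~\ref{exm:5}, and then to identify the polyhedral complex on the glued space with $\Pi$ itself. For each $\alpha\in\Lambda$ I set $P_\alpha=N_\alpha$ and $\Delta_\alpha=\phi_\alpha(\sigma_\alpha)\subset P_\alpha$, which by Definition~\ref{def:10}(\ref{item:22}) is a full-dimensional convex polyhedron. For $\alpha,\beta\in\Lambda$, condition~(\ref{item:20}) of Definition~\ref{def:10} ensures that $\sigma_\alpha\cap\sigma_\beta$ is a common face, and I take
\[
F_{\alpha,\beta}=\phi_\alpha(\sigma_\alpha\cap\sigma_\beta),\qquad F_{\beta,\alpha}=\phi_\beta(\sigma_\alpha\cap\sigma_\beta),\qquad j_{\beta,\alpha}=\phi_\beta\circ\phi_\alpha^{-1}|_{F_{\alpha,\beta}}.
\]
Condition~(\ref{item:evaluationmap}) of Definition~\ref{def:10} says that the spaces of affine functions on the common face induced from either side coincide with $M_{\sigma_\alpha\cap\sigma_\beta}$, which guarantees that $j_{\beta,\alpha}$ is an affine isomorphism.

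Next I would verify the compatibility conditions of Example~\ref{exm:5}. Condition~(\ref{item:18}) is immediate, and condition~(\ref{item:19}) reduces to the tautology $\phi_\gamma\circ\phi_\alpha^{-1}=(\phi_\gamma\circ\phi_\beta^{-1})\circ(\phi_\beta\circ\phi_\alpha^{-1})$ on the relevant triple intersection. For the finiteness condition~(\ref{item:6}), I note that $\sigma_\alpha$ has only finitely many faces, being the homeomorphic image of a finite intersection of closed halfspaces in $N_\alpha$. If $\sigma_\beta$ meets $\sigma_\alpha$, then the intersection is one of these finitely many faces $\tau$, and $\sigma_\beta$ contains $\tau$; choosing any point in the relative interior of $\tau$, the locally finite covering $\{\sigma_\gamma\}_\gamma$ implies that only finitely many polyhedra of $\Pi$ contain that point, hence only finitely many contain $\tau$. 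Summing over the finitely many faces of $\sigma_\alpha$ yields the desired finiteness of $\{\beta : F_{\beta,\alpha}\ne\emptyset\}$.

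Let $Y$ with its polyhedral structure $\Gamma$ be produced from this gluing data via Example~\ref{exm:5}, and let $q\colon\bigsqcup_\alpha\Delta_\alpha\to Y$ denote the quotient map. The inverse evaluations $\phi_\alpha^{-1}\colon\Delta_\alpha\to X$ assemble into a continuous map $\psi\colon\bigsqcup_\alpha\Delta_\alpha\to X$ that respects $\sim$ by construction, hence descends to a continuous bijection $\widetilde\psi\colon Y\to X$. To conclude that $\widetilde\psi$ is a homeomorphism I would show it is closed. For $C\subset Y$ closed one computes
\[
\widetilde\psi(C)\cap\sigma_\alpha=\phi_\alpha^{-1}(q^{-1}(C)\cap\Delta_\alpha),
\]
which is closed in $\sigma_\alpha$ since $q^{-1}(C)$ is closed in $\bigsqcup_\alpha\Delta_\alpha$. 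Because $\{\sigma_\alpha\}_\alpha$ is a locally finite closed cover of $X$, this forces $\widetilde\psi(C)$ to be closed in $X$. Under $\widetilde\psi$ the polyhedra $\Delta_\alpha\subset Y$ map onto $\sigma_\alpha\subset X$, and the spaces of affine functions on $\Delta_\alpha$ (the structure chosen in Example~\ref{exm:5}) pull back precisely to $M_\alpha$, because by Definition~\ref{def:10}(\ref{item:22}) the latter is the restriction to $\Delta_\alpha$ of the affine functionals on $N_\alpha$. This yields the asserted isomorphism of polyhedral complexes.

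The main obstacle is the finiteness condition~(\ref{item:6}), which is not a direct consequence of local finiteness but requires the two-step argument above: first fix a face of $\sigma_\alpha$ and apply local finiteness at a relative interior point, then sum over the finitely many faces of $\sigma_\alpha$. All other verifications are routine once one notices that the evaluation homeomorphisms $\phi_\alpha$ together with the face-matching in Definition~\ref{def:10}(\ref{item:20}) supply exactly the data demanded by Example~\ref{exm:5}, and that the coherent topology of a locally finite closed cover guarantees that gluing and then projecting to $X$ is a homeomorphism.
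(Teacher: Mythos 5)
Your proof follows the same route as the paper: build the gluing data from the evaluation maps $\phi_\alpha$, check the compatibility conditions of Example~\ref{exm:5}, form the quotient $Y$, and verify that the induced bijection $Y\to X$ is a homeomorphism by exploiting that $\{\sigma_\alpha\}$ is a locally finite closed cover. The only place you go beyond the paper is the finiteness condition~\eqref{item:6}, which the paper dispatches with the one-line assertion that it follows from local finiteness; your two-step argument --- first observe that $\sigma_\alpha$ has only finitely many nonempty faces, then for each such face $\tau$ use local finiteness at a point of $\relint(\tau)$ to bound the number of $\sigma_\beta$ containing $\tau$ --- is exactly the right way to fill in that gap, and is indeed necessary when $\sigma_\alpha$ is unbounded. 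The remaining verifications (cocycle condition, identification of the affine-function spaces via the remark after Definition~\ref{def:10}) match the paper, so this is a correct and slightly more detailed version of the same argument.
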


\begin{proof}
  With notation as in Definition \ref{def:10}, let
  $\Pi = \left(\{\sigma_{\alpha}\}_{\alpha \in \Lambda}, \{M_{\alpha}
    \}_{\alpha \in \Lambda}\right)$ be a polyhedral complex on a
  polyhedral space $X$.  For each $\alpha \in \Lambda$ consider the
  polyhedron in the dual space $N_{\alpha}=\Hom(M_{\alpha},\R)$ given
  by
\begin{displaymath}
\Delta_{\alpha} = \phi_{\alpha}(\sigma_{\alpha})
\end{displaymath}
and, for $\alpha ,\beta \in \Lambda $,  consider the faces of
$\Delta_{\alpha}$ and $\Delta_{\beta}$ respectively defined as
\begin{displaymath}
  F_{\alpha,\beta }=\phi_{\alpha}(\sigma_{\alpha}\cap \sigma _{\beta
  }) \and F_{\beta,\alpha  }=\phi_{\beta }(\sigma_{\alpha}\cap \sigma _{\beta
  }) 
\end{displaymath}
and the affine map
$ j_{\beta ,\alpha } \colon F_{\alpha,\beta} \to F_{\beta,\alpha}$
given by the restriction of $\phi_{\beta }\circ (\phi_{\alpha })^{-1}$
to the face $ F_{\alpha,\beta }$.

This data satisfies the conditions in Example \ref{exm:5} and in
particular, the hypothesis that the covering
$\{\sigma_{\alpha }\}_{\alpha\in \Lambda}$ is locally finite implies
the condition \eqref{item:6}.  Hence we can consider its associated
polyhedral complex $\Gamma$ on the polyhedral space $Y$ as in
\eqref{eq:15} and \eqref{eq:25}.  The isomorphisms
$\phi_{\alpha}^{-1}\colon \Delta_{\alpha}\to\sigma_{\alpha}$,
$\alpha\in\Lambda$, induce a bijective map
\begin{displaymath}
f \colon Y \longrightarrow X
\end{displaymath}
that is continuous, by the universal property of the quotient topology.

Since the set of polyhedra $\sigma_{\alpha}$, $\alpha \in \Lambda$,
forms a locally finite covering of $X$ by closed subsets and the
restriction $f^{-1}|_{\sigma_{\alpha}}=\phi_{\alpha}$ to each of them
is continuous, the inverse map $f^{-1}$ is also continuous. Hence $f$
is a homeomorphism that is affine between each pair of polyhedra
$\Delta_{\alpha}$ and $\sigma_{\alpha}$, and so it is an isomorphism
of polyhedral complexes.
\end{proof}

To do convex analysis on a polyhedral space, we need a notion encoding
how its different polyhedra are placed with respect to each other.
For this, we first need to map it to a fixed ambient space. The
definition below is a variant of \cite[Definition~2.1]{GR}.

\begin{defn}\label{weakembedding}
  A \emph{quasi-embedded polyhedral space} is a triple $(X,N,\iota)$
  where $X$ is a polyhedral space, $N$ a finite dimensional
  $\R$-vector space, and $\iota $ a map $ X\to N$ such that
  $\iota (X)$ is contained in an affine hyperplane $H$ not containing
  zero and there is a polyhedral complex on $X$ for which the
  restriction of $\iota $ to each of its polyhedra is affine and
  injective.  The map $\iota $ is called the \emph{quasi-embedding} of
  $X$ in $N$.

  A \emph{polyhedral complex} $\Pi$ on the quasi-embedded polyhedral
  space $(X,N,\iota)$ is a polyhedral complex on $X$ satisfying the
  above condition, namely that $\iota$ is affine and injective on each
  of its polyhedra. For each $\sigma\in \Pi$, its image with respect to
  the evaluation map $\phi_{\sigma}$ spans a full-dimensional
  polyhedral cone of $N_{\sigma}$, and so the quasi-embedding $\iota$
  induces a linear injective map
  \begin{displaymath}
    \iota _{\sigma }\colon N_{\sigma}\to N
  \end{displaymath}
  that sends $H_{\sigma }$ to $H$.
\end{defn}

We will usually denote a quasi-embedded polyhedral space by its
underlying polyhedral space $X$ and, in this case, we will denote the
corresponding quasi-embedding, vector space and hyperplane by
$\iota _{X}$, $N_{X}$ and $H_{X}$, respectively.

\begin{defn}\label{def:euclidean}
  A \emph{Euclidean polyhedral space} is a quasi-embedded polyhedral
  space $X$ for which the vector space $N_{X}$ is equipped with a
  Euclidean metric.

  A \emph{polyhedral complex} on a Euclidean polyhedral space is a
  polyhedral complex $\Pi$ on the associated quasi-embedded polyhedral
  space. In this situation, for each polyhedron 
  $\sigma \in \Pi $ the Euclidean metric on $N_{X}$ induces a
  Euclidean metric on the vector space $N_{\sigma }$.
\end{defn}

\begin{defn}
  \label{def:17}
  A morphism between two quasi-embedded polyhedral spaces
  $f\colon X\to X'$ is a pair $f=(f_{1},f_{2})$ where
  $f_{1}\colon X \to X'$ is a morphism of polyhedral spaces and
  $f_{2}\colon N_{X}\to N_{X'}$ is an affine map such that the diagram
  \begin{displaymath}
    \xymatrix{X \ar[r]^{f_{1}}\ar[d]_-{\iota_{X}} & X'\ar[d]^-{\iota_{X'} }\\
    N_{X}\ar[r]_{f_{2}} & N_{X'}  
    }
  \end{displaymath}
  commutes. A morphism between Euclidean polyhedral spaces is a
  morphism between their underlying quasi-embedded polyhedral spaces.
\end{defn}

\begin{rem}\label{rem:3}
  The main differences between the notions of quasi-embedded
  polyhedral complex in Definition \ref{weakembedding} and that of
  weakly embedded polyhedral complex in \cite[Definition~2.1]{GR} are
  that for the latter, first the affine maps $\iota_{\sigma }$ are not
  required to be injective and second, the vector spaces $M$ and
  $M_{\sigma }$ are equipped with a lattice and the affine maps
  $\iota_{\sigma }$ are lattice maps.  Here we shift the focus from
  lattices to Euclidean metrics because it is more convenient to do
  convex analysis.
\end{rem}

\section{Minkowski weights and Minkowski cycles}
\label{sec:mink-weights-inters}

In this section we introduce Minkowski weights on polyhedral complexes
and Minkowski cycles on Euclidean polyhedral spaces. This will allow
us to define balanced polyhedral spaces, the spaces on which we
consider the different notions of concavity. We also define and study
the basic operations on Minkowski weights and cycles, including their
restriction to open subsets, pullback to subdivisions, and product
with piecewise affine functions.

Throughout this section, we denote by $X$ a Euclidean polyhedral space
and $U$ an open subset of it.

\begin{defn}
  \label{def:18}
  Let $\Pi $ be a polyhedral complex on $X$. The \emph{restriction} of
  $\Pi $ to $U$, denoted by $\Pi |_{U}$, is the set of polyhedra given
  by
\[
\Pi|_U = \{ \sigma \in \Pi \; \big{|} \; \sigma \cap U \not = \emptyset \}.
\]
For $k\in \Z_{\ge 0}$, we denote by $\Pi|_{U} (k)$ the set of polyhedra of
$\Pi|_{U}$ of dimension $k$.  The \emph{skeleton} of $\Pi|_{U}$ of
dimension $k$, denoted by $\Sk_{k}(\Pi|_{U})$, is the subset of $X$
given by the union of these polyhedra.
\end{defn}

The restriction of a polyhedral complex to an open subset is not a
polyhedral complex, because it does not contain all the faces of its
constituent polyhedra. For instance, if $X=[0,1]$ is the unit
interval, $\Pi =\{[0,1],\{0\},\{1\},\emptyset\}$ is the standard
polyhedral complex on $[0,1]$ and $U=[0,1/2)$, then
$\Pi|_{U} =\{[0,1],\{0\}\}$ and the faces $\emptyset$ and
  $\{1\}$ of $[0,1]$ do not belong to $\Pi|_{U}$.

\begin{defn}
  \label{def:19}
  Let $\Pi $ be a polyhedral complex on $X$ and let
  $\sigma, \tau\in \Pi$ such that $\tau$ is nonempty and a
  \emph{facet} of $\sigma$, that is, a face of $\sigma$ of
  codimension~1. With notation as in Definition \ref{weakembedding},
  the affine subspace $\iota_{X}(H_{\sigma})$ of the Euclidean space
  $N_{X}$ contains $\iota_{X}(H_{\tau})$ as a hyperplane.  The
  \emph{unit vector normal to $\tau$ in the direction of $\sigma$},
  denoted by $v_{\sigma \setminus \tau }$, is defined as the unique
  unit vector in $N_{X}$ that is orthogonal to $\iota_{X}(H_{\tau})$,
  parallel to $\iota_{X}(H_{\sigma })$ and points towards $\sigma $
  from $\tau $.
\end{defn}
  
The next definition of Minkowski weights is the adaptation to our
setting of the classical notion for lattice fans introduced by Fulton
and Sturmfels in \cite{FS}.  For $\sigma, \tau\in \Pi$, we write
either $\tau \prec \sigma $ or $\sigma \succ \tau $ to indicate that
$\tau $ is a face of~$\sigma $.
  
\begin{defn}\label{def:4}
  Let $\Pi$ be a polyhedral complex on $X$ and $k\in \Z_{\ge 0}$. A
  \emph{weight} on $\Pi|_U$ of dimension $k$ is a map
  $c\colon \Pi|_{U}(k)\to \R$. Its \emph{support} is the subset of $X$
  given by
  \begin{displaymath}
    |c| = \bigcup_{c(\sigma )\not = 0}\sigma.
  \end{displaymath}
  This weight is \emph{positive} if $c(\sigma )\ge 0$ for all
  $\sigma \in \Pi|_{U} (k)$.  For convenience, any weight $c$ on
  $\Pi|_U$ of dimension $k$ is extended to a function
  $c\colon \Pi|_{U}\rightarrow \R $ by setting $c(\sigma )=0$ for all
  $\sigma \in \Pi |_{U}(\ell)$ with $\ell \not = k$.

  A weight $c$ on $\Pi|_U$ of dimension $k$ is a
  \emph{Minkowski weight} if for each $\tau \in \Pi|_{U} (k-1)$,
  \begin{equation}
    \label{eq:36}
    \sum_{\mathclap{\substack{\sigma \in \Pi|_U (k)\\\sigma \succ \tau }}}c(\sigma
    )\, v_{\sigma \setminus \tau } =0.
  \end{equation}

  The set of weights on $\Pi|_U$ of dimension $k$, denoted
  $W_{k}(\Pi |_{U})$, is an Abelian group under the addition of
  functions. The subgroup of its Minkowski weights is denoted by
  $M_{k}(\Pi|_U)$, and the cone of those that are positive is denoted
  by $M_{k}^{+}(\Pi|_U)$.  For short, when $U=X$ we denote this
  Abelian group, subgroup and cone by $W_{k}(\Pi)$, $M_{k}(\Pi)$ and
  $M_{k}^{+}(\Pi)$, respectively.
\end{defn}

\begin{defn} A \emph{piecewise affine function} on the open subset $U$
  of the polyhedral space $X$ is a function $f\colon U \to \R$ for
  which there is a polyhedral complex $\Pi$ on $X$ such that for each
  $\sigma \in \Pi|_{U}$ the restriction $f| _{\sigma \cap U}$ is
  affine or equivalently, it is given by an element of
    $M_{\sigma}$. In this situation, we say that $f$ is
  \emph{defined} on $\Pi$.

For each $\sigma \in \Pi|_{U}$, we denote by 
  $f_{\sigma } $ a  linear function on $ N_{X }$  satisfying 
  \begin{displaymath}
    f|_{\sigma \cap U}=f_{\sigma } \circ \iota_{X,\sigma} |_{\sigma \cap U},
  \end{displaymath}
  Since the image of $\sigma$ in $N_{\sigma}$ spans a full-dimensional
  polyhedral cone, the restriction of $f_{\sigma }$ to
  $\iota_{X,\sigma}(N_{\sigma})$ does not depend on the choice of this
  linear function.

  We denote by $\PA(U)$ the Abelian group of piecewise affine functions
  on $U$, and by $\PA_{\Pi}(U)$ the subgroup of those piecewise affine
  functions that are defined on $\Pi$.   
\end{defn}

\begin{rem}
  \label{rem:1}
  Piecewise affine functions on polyhedral spaces are continuous,
  because they are continuous on the restriction to $U$ of each
  polyhedron of the polyhedral complex $\Pi$, and these polyhedra form
  a locally finite closed covering of the polyhedral space~$X$.
\end{rem}

We next define the basic operations on weights on polyhedral complexes
and study their interplay.

\begin{defn}[Restriction to open subsets]
  \label{def:20}
  Let $\Pi$ be a polyhedral complex on~$X$,  $V $ an open subset of
  $U$, and $c$ a $k$-dimensional weight on $\Pi|_{U}$. The
  \emph{restriction} of $c$ to $V $, denoted by $c|_{V }$, is  the
  $k$-dimensional weight on $\Pi|_{V }$ given by the restriction of
  this weight to the subset $\Pi|_{V }$ of $\Pi|_{U}$.
  \end{defn}

\begin{defn}[Pullback to subdivisions]
  \label{def:21}
  Let $\Pi ,\Pi'$ be polyhedral complexes on $X$ with $\Pi'\ge\Pi$ and
  $c$ a $k$-dimensional weight on $\Pi|_{U}$. The \emph{pullback} of
  $c$ to $\Pi '$, denoted by $c_{\Pi '}$, is the $k$-dimensional
  weight on $ \Pi' |_{U}$ defined, for $\sigma'\in \Pi'|_{U}$, by
  \begin{equation*}
%    \label{eq:11}
    c_{\Pi '}(\sigma ')=
      \begin{cases}
        c(\sigma ) & \text{ if there is } \sigma \in \Pi|_U \text{
          with } \sigma  \supset \sigma' \text{ and }
        \dim(\sigma)=\dim(\sigma'), \\
        0 & \text{ else.}
      \end{cases}
  \end{equation*}
  
 \end{defn}

\begin{defn}[Product with piecewise affine functions]
  \label{def:intersection-product}
  Let $\Pi$ be a polyhedral complex on $X$,  $f$ a piecewise affine
  function on $U$ defined on $\Pi$, and
  $c$ a $k$-dimensional weight on $\Pi|_{U}$.
The  \emph{product} of $f$ and $c$, denoted by $f\cdot c$, is the
  $(k-1)$-dimensional weight on $\Pi|_{U}$ defined, for each
  $\tau \in \Pi|_{U} (k-1)$, by
  \begin{equation}\label{eq:1}
    (f\cdot c)(\tau )=- \sum_{\sigma \succ \tau }c(\sigma ) \, {f}_{\sigma}(v_{\sigma \setminus  \tau }),
  \end{equation}
  the sum being over the $k$-dimensional polyhedra
  $\sigma\in \Pi|_{U}$ having $\tau$ as a facet.
\end{defn}

Choosing any point $x\in \tau $, the formula in \eqref{eq:1} can be
alternatively written as
\begin{equation}
  \label{eq:2}
      (f\cdot c)(\tau )=\bigg(\sum_{\sigma \succ          \tau }c(\sigma )\bigg)
      \, f(x)
      - \sum_{\sigma \succ \tau }c(\sigma ) \, f_{\sigma }(\iota_{X,\sigma }(x)+v_{\sigma \setminus \tau }).
    \end{equation}

    \begin{prop}
      \label{prop:12}
      Let $\Pi$ be a polyhedral complex on $X$, $V $ an open subset of
      $U$, $\Pi'$ a subdivision of $\Pi$, $f$ a piecewise affine
      function on $U$ defined on $\Pi$, and $c$ a weight on
      $\Pi|_{U}$. Then
      \begin{displaymath}
        (c_{\Pi'})|_{V }= (c|_{V })_{\Pi'}, \quad  (f\cdot c)|_{V }=
        f|_{V }\cdot c|_{V } \and (f\cdot c)_{\Pi '} = f\cdot c_{\Pi '} .
      \end{displaymath}
    \end{prop}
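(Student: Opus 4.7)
The plan is to verify each identity by evaluating both sides on every polyhedron of the relevant complex and matching definitions term by term.

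The first identity is immediate: for $\sigma'\in\Pi'|_V$, both $(c_{\Pi'})|_V(\sigma')$ and $(c|_V)_{\Pi'}(\sigma')$ equal $c(\sigma)$ for the unique $\sigma\in\Pi|_U$ with $\sigma\supset\sigma'$ of the same dimension (or $0$ if no such $\sigma$ exists), and such a $\sigma$ automatically belongs to $\Pi|_V$ because $\sigma\supset\sigma'$ and $\sigma'\cap V\neq\emptyset$. The second identity is equally direct: for $\tau\in\Pi|_V(k-1)$ every $\sigma\succ\tau$ also meets $V$ through $\tau\cap V$, so the sums defining $(f\cdot c)|_V(\tau)$ and $(f|_V\cdot c|_V)(\tau)$ range over the same polyhedra with the same $c$-values and linear extensions.

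The third identity is the substantive point. I would fix $\tau'\in\Pi'|_U(k-1)$ and let $\tau$ be the unique minimal polyhedron of $\Pi$ containing $\tau'$; minimality forces $\relint(\tau')\subset\relint(\tau)$. Writing $d=\dim(\tau)$, I would split into three cases. If $d\geq k+1$, both sides vanish: the left-hand side by the definition of $(f\cdot c)_{\Pi'}$, and the right-hand side because any $\sigma'\succ\tau'$ of dimension $k$ with $c_{\Pi'}(\sigma')\neq 0$ would lie in a $k$-dimensional element of $\Pi$ containing $\tau'$, contradicting minimality of $\tau$. If $d=k$, the left-hand side vanishes while the nonzero summands on the right come from the $k$-cells of $\Pi'$ subdividing $\tau$ that have $\tau'$ as a facet; because $\relint(\tau')\subset\relint(\tau)$ there are exactly two such cells, one on each side of the affine hull of $\tau'$ within $\tau$, both with $c_{\Pi'}$-value $c(\tau)$, common linear extension $f_\tau$, and opposite unit normals, so their contributions cancel.

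The remaining case $d=k-1$ is the heart of the matter: the left-hand side equals $(f\cdot c)(\tau)$, and I would establish a bijection between the $k$-cofaces $\sigma\succ\tau$ in $\Pi|_U$ and the $k$-cofaces $\sigma'\succ\tau'$ in $\Pi'|_U$ with $c_{\Pi'}(\sigma')\neq 0$, sending $\sigma$ to the unique $k$-cell $\sigma'\subset\sigma$ of $\Pi'$ that abuts $\relint(\tau')$ from the $\sigma$-side. Under this bijection, $\sigma'\subset\sigma$ have the same affine hull (both $k$-dimensional), $\tau'\subset\tau$ likewise (both $(k-1)$-dimensional), so $v_{\sigma'\setminus\tau'}=v_{\sigma\setminus\tau}$, and $f_{\sigma'}$ can be chosen equal to $f_\sigma$ since $f$ is defined on $\Pi$; the two sums then match summand by summand. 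The main obstacle I anticipate is justifying these local combinatorial statements cleanly — chiefly that the induced subdivision of the star of $\tau$ in $\Pi'$ produces exactly one $k$-cell $\sigma'\subset\sigma$ adjacent to $\tau'$ on the $\sigma$-side in the $d=k-1$ case, and exactly two (with opposite normals) in the $d=k$ case — which reduces to a standard fact about subdivisions of a polyhedron near a relatively interior $(k-1)$-cell.
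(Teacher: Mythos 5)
Your proposal is correct and follows essentially the same strategy as the paper's proof: the first two identities directly from the definitions, and the third by fixing $\tau'\in\Pi'|_U(k-1)$, passing to the minimal polyhedron $\tau\in\Pi|_U$ containing it, and splitting into the cases $\dim\tau\ge k+1$, $\dim\tau=k$, and $\dim\tau=k-1$ with the same cancellation and bijection arguments. The ``standard fact about subdivisions'' you flag at the end (that a relatively interior $(k-1)$-cell of a subdivision of a $k$-polyhedron is a facet of exactly two $k$-cells, and that in the $d=k-1$ case each coface $\sigma\succ\tau$ contributes exactly one $k$-cell of $\Pi'$ adjacent to $\tau'$) is used implicitly in the paper as well, so you have not introduced a gap relative to the original.
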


    \begin{proof}
      The first equality, that is, the compatibility between the
      restriction to an open subset and the pullback to a
      subdivision, follows almost immediately from the definitions.
      Both $ (c_{\Pi'})|_{V }$ and $ (c|_{V })_{\Pi'}$ are
      $k$-dimensional weights on $\Pi'|_{V }$ and for each
      $\sigma'\in \Pi'|_{V }$, their possibly nonzero values  are
      respectively defined by
      \begin{enumerate}
      \item \label{item:10} $ (c_{\Pi'})|_{V }(\sigma')= c(\sigma)$ if there is
      $\sigma\in \Pi|_{U}$ with $\sigma\supset \sigma'$ and
      $\dim(\sigma)=\dim(\sigma')$, 
      \item \label{item:14} $ (c|_{V })_{\Pi'}(\sigma')= c(\wt\sigma)$ if there is
      $\wt\sigma\in \Pi|_{V }$ with $\wt\sigma\supset \sigma'$ and
      $\dim(\wt\sigma)=\dim(\sigma')$.
    \end{enumerate}
    Since $\sigma' \cap V \ne \emptyset$ and $\sigma' \supset \sigma$,
    we have that  $\sigma \in
    \Pi|_{V }$. Hence  $\sigma=\wt\sigma$ and both weights
    coincide, as stated. 

    The second equality is also direct from the definitions, since the
    product of a weight with a piecewise affine function is defined in
    local terms.
    
    Hence we turn to the third equality, giving the compatibility
    between the pullback to a subdivision and the product with a
    piecewise affine function. Both $(f\cdot c)_{\Pi '} $ and
    $ f\cdot c_{\Pi '} $ are $(k-1)$-dimensional weights on
    $\Pi'|_{U}$, and so it is enough to consider their values on the
    set of polyhedra $\Pi'|_{U}(k-1)$.

    Let $\tau'\in \Pi'|_{U}(k-1)$ and denote by $\tau$ the minimal
    polyhedron in $\Pi|_{U}$ containing $\tau'$. On the one hand, if
    $\dim(\tau)=k-1$ then, with notation as in Definition
    \ref{def:intersection-product},
    \begin{equation}
      \label{eq:31}
    (f\cdot c)_{\Pi '} (\tau')=       (f\cdot c) (\tau) =   -
    \sum_{\mathclap{\substack{\sigma \in \Pi|_U (k)\\\sigma \succ \tau
      }}}c(\sigma ) \, {f}_{\sigma }(v_{\sigma \setminus
      \tau }),
    \end{equation}
whereas if $\dim(\tau)\ge k$ then $     (f\cdot c)_{\Pi '}
(\tau')=0$. On the other hand, 
\begin{equation}
  \label{eq:39}
  (  f\cdot c_{\Pi '} )(\tau') =  - \sum_{\mathclap{\substack{\sigma' \in
      \Pi'|_U (k)\\\sigma' \succ \tau' }}}c_{\Pi'}(\sigma' ) \, {f}_{\sigma' }(v_{\sigma' \setminus
    \tau' }).
\end{equation}

When $\dim(\tau)\ge k+1$, for each $\sigma' \in \Pi'|_U (k)$ with
$\sigma' \succ \tau'$  the minimal polyhedron
$\sigma\in \Pi'|_{U}$ containing it also contains $\tau$ and so it has dimension at least
$k+1$. Hence $c_{\Pi'}(\sigma')= 0$ and the formula in~\eqref{eq:39}
implies that $ ( f\cdot c_{\Pi '} )(\tau') = 0$, proving the equality
in this case.

When $\dim(\tau)=k$, there are two polyhedra
$\sigma', \sigma'' \in \Pi'|_U (k)$ contained in $\tau$ and having
$\tau'$ as a facet.  We have that
\begin{displaymath}
c_{\Pi'} ( \sigma')=c_{\Pi'} (\sigma'') \and
v_{\sigma' \setminus \tau'} =  -v_{\sigma''\setminus \tau'},
\end{displaymath}
and also that $ {f}_{\sigma'}$ and ${f}_{\sigma''}$ coincide with $f_{\tau }$
  on
  ${\iota}_{X,\sigma}(N_{\sigma})={\iota}_{X,\sigma'}(N_{\sigma'})=\iota
  _{X,\tau }(N_{\tau })$. 
Hence, the contributions of these two polyhedra to the sum in
\eqref{eq:39} cancel. For any other polyhedron in $\Pi'|_U (k)$ having
$\tau'$ as a facet, the minimal polyhedron in $\Pi|_{U}$ containing it
has dimension greater than $k$ and so its value for the weight
$c_{\Pi'}$ is zero. Thus again $ ( f\cdot c_{\Pi '} )(\tau') = 0$ in
this case.

Finally suppose that $\dim(\tau)=k-1$. To each polyhedron
$\sigma' \in \Pi'|_{U}(k)$ contained in the $k$-dimensional skeleton
$\Sk_{k}(\Pi|_{U})$ and having $\tau$ as a facet, we associate the
minimal polyhedron $\sigma\in\Pi|_{U}$ containing it. This assignment
gives a bijection between this set of $k$-dimensional polyhedra of
$\Pi'|_{U}$ and that of polyhedra in $\Pi|_{U}(k)$ having $\tau$ as a
facet, and we have that
\begin{displaymath}
  c_{\Pi'}(\sigma') = c(\sigma) \and 
  v_{\sigma'\setminus \tau'}=v_{\sigma\setminus \tau}
\end{displaymath}
and also that $ {f}_{\sigma'}$ and ${f}_{\sigma''} $ coincide on
${\iota}_{X,\sigma}(N_{\sigma})={\iota}_{X,\sigma'}(N_{\sigma'})$. Hence
the sum in \eqref{eq:39} coincides with that in \eqref{eq:31}, since
the value of the weight $c_{\Pi'}$ at the polyhedra in $ \Pi'|_{U}(k)$
that are not contained in $\Sk_{k}(\Pi|_{U})$ is zero. Thus in this
case $ (f\cdot c)_{\Pi '} (\tau')= f\cdot c_{\Pi '} (\tau')$, which
concludes the proof.
\end{proof}

We next prove that the product of several piecewise affine functions
with a weight is commutative. This result is similar to
\cite[Proposition~3.7a]{AR}, and its proof is done in a similar way.
    
    \begin{prop}
      \label{prop:4}
      Let $\Pi$ be a polyhedral complex on $X$, $f,g$ piecewise affine
      functions on $U$ defined on $\Pi$, and $c$ a $k$-dimensional
      weight on $\Pi|_U$. Then
      \begin{displaymath}
 f\cdot (g\cdot c) = g\cdot (f\cdot c).
      \end{displaymath}
\end{prop}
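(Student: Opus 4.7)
The plan is to verify equality of the two $(k-2)$-dimensional weights pointwise, on each $\rho\in\Pi|_U(k-2)$. Unfolding definition \eqref{eq:1} twice, one gets
\begin{displaymath}
  (f\cdot(g\cdot c))(\rho)=\sum_{\rho\prec\tau\prec\sigma} c(\sigma)\, g_\sigma(v_{\sigma\setminus\tau})\,f_\tau(v_{\tau\setminus\rho}),
\end{displaymath}
the double sum ranging over pairs with $\dim\tau=k-1$, $\dim\sigma=k$, and the analogous expression with $f$ and $g$ exchanged. Since every such pair satisfies $\rho\prec\sigma$ with $\rho$ a codimension~$2$ face of $\sigma$, reindexing by $\sigma$ first reduces the claim to showing that, for every $\sigma\in\Pi|_U(k)$ having $\rho$ as a codimension~$2$ face,
\begin{displaymath}
  \Delta_\sigma:=\sum_{\rho\prec\tau\prec\sigma}\bigl[g_\sigma(v_{\sigma\setminus\tau})f_\tau(v_{\tau\setminus\rho})-f_\sigma(v_{\sigma\setminus\tau})g_\tau(v_{\tau\setminus\rho})\bigr]=0.
\end{displaymath}

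For each such $\sigma$, the link of $\rho$ in $\sigma$---that is, the quotient of $\sigma$ by the affine span of $\rho$---is a two-dimensional pointed polyhedral cone, hence has exactly two bounding rays, corresponding to exactly two intermediate faces $\tau_1,\tau_2$. Write $u_i=v_{\tau_i\setminus\rho}$ and $w_i=v_{\sigma\setminus\tau_i}$; by construction all four vectors lie in the two-dimensional Euclidean subspace $V$ of $N_X$ obtained as the orthogonal complement, inside the tangent space of $\sigma$, of the tangent space of $\rho$, and $(u_i,w_i)$ is an orthonormal basis of $V$ for each $i$. Because $f$ agrees with $f_\sigma\circ\iota_{X,\sigma}$ on $\sigma\supset\tau_i$ and $u_i$ is a tangent direction to $\tau_i$, one has $f_{\tau_i}(u_i)=f_\sigma(u_i)$, and similarly for $g$. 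Thus
\begin{displaymath}
  \Delta_\sigma=\sum_{i=1,2}\bigl[f_\sigma(u_i)g_\sigma(w_i)-f_\sigma(w_i)g_\sigma(u_i)\bigr],
\end{displaymath}
which is the natural pairing on $V$ of the two-form $f_\sigma\wedge g_\sigma$ with the bivector $u_1\wedge w_1+u_2\wedge w_2$.

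The only non-routine step is the geometric identity $u_1\wedge w_1+u_2\wedge w_2=0$ in $\Lambda^2 V$. Each pair is orthonormal, so $u_i\wedge w_i=\pm\omega_V$ for the standard area form $\omega_V$; the two signs are opposite because, in the planar cone, $u_1$ and $u_2$ are the two outgoing boundary rays while each $w_i$ is the inward unit normal to $\tau_i$, so the orientation of $(u_2,w_2)$ is reversed relative to that of $(u_1,w_1)$. This is transparent under the normalization $u_1=(1,0)$, $w_1=(0,1)$, $u_2=(\cos\theta,\sin\theta)$, $w_2=(\sin\theta,-\cos\theta)$ with opening angle $\theta\in(0,\pi)$, giving $u_1\wedge w_1=+1$ and $u_2\wedge w_2=-1$. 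This cancellation yields $\Delta_\sigma=0$ for every $\sigma$, and hence the symmetry $f\cdot(g\cdot c)=g\cdot(f\cdot c)$.
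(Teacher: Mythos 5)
Your proof is correct and rests on the same underlying geometric fact as the paper's, but it packages that fact differently. Both arguments reduce to the observation that, for each $\sigma$ with $\rho$ a codimension-$2$ face, the two intermediate facets $\tau_1,\tau_2$ give canceling contributions. The paper proves this by noting (via a reflection argument in the plane) that $v_{\sigma\setminus\tau}=a\,v_{\tau\setminus\rho}+b\,v_{\tau'\setminus\rho}$ and $v_{\sigma\setminus\tau'}=a\,v_{\tau'\setminus\rho}+b\,v_{\tau\setminus\rho}$ hold with the \emph{same} coefficients $a,b$, and then carries out an explicit rearrangement of the double sum. You instead regroup by $\sigma$ at the outset and identify the $\sigma$-local contribution as the pairing of the $2$-form $f_\sigma\wedge g_\sigma$ with the bivector $u_1\wedge w_1+u_2\wedge w_2$ in $\Lambda^2 V$, which vanishes because the two orthonormal frames $(u_i,w_i)$ carry opposite orientations (each $u_i$ is an outgoing boundary ray and each $w_i$ the inward normal in the $2$-dimensional quotient cone). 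This is equivalent to the paper's coefficient identity---indeed $w_1=au_1+bu_2$, $w_2=au_2+bu_1$ forces $u_1\wedge w_1=-u_2\wedge w_2$---but the exterior-algebra phrasing makes the antisymmetry in $(f,g)$ structurally transparent rather than emerging at the end of a computation. The one point worth making explicit, which you use implicitly, is the diamond property of polyhedral face lattices (exactly two $\tau$ with $\rho\prec\tau\prec\sigma$), and the fact that both of these automatically meet $U$ because $\rho$ does; both are standard and unproblematic.
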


\begin{proof}
  The proof is based on the following observation. Let $v_{1}$ and
  $v_{2}$ be two linearly independent unit vectors in a Euclidean
  space. Denote by $v_{1}^{\perp}$ and $v_{2}^{\perp}$ the unit
  vectors in the plane generated by $v_{1}$ and $v_{2}$, that are respectively
  orthogonal to $v_{1}$ and to $v_{2}$, and that both
  $v_{1},v_{1}^{\perp}$ and $v_{2}^{\perp},v_{2}$ have the same
  orientation as $v_{1},v_{2}$. Then for $a,b\in \R$,  the
   equations
  \begin{equation}
    \label{eq:40}
    v_{1}^{\perp}= a \, v_{1} + b\, v_{2} \and     v_{2}^{\perp}= a\, v_{2} + b\, v_{1}
  \end{equation}
  are equivalent.  Indeed, consider the reflection on the plane
  generated by $v_{1}$ and $v_{2}$ by the bisector of the angle
  between these two vectors. This reflection interchanges $v_{1}$ with
  $v_{2}$ and $v_{1}^{\perp}$ with $v_{2}^{\perp}$ and respects linear
  relations. Applying it to any of the two equations in \eqref{eq:40}
  gives the other one, proving that they are equivalent.

  Now let $\rho \in \Pi|_U(k-2)$. For each $\tau \in \Pi|_U(k-1)$ and
  $\sigma \in \Pi|_U(k)$ with $\rho \prec \tau \prec \sigma $, we
  denote by $\tau '$ the unique polyhedron different from $\tau $ that
  lies in $ \Pi|_U(k-1)$ and verifies that
  $\rho \prec \tau' \prec \sigma $.  The vectors
  $v_{\tau \setminus \rho }$, $v_{\tau' \setminus \rho }$,
  $v_{\sigma \setminus \tau }$ and $v_{\sigma \setminus \tau'}$
  satisfy the conditions of the vectors $v_{1}$, $v_{2}$,
  $v_{1}^{\perp}$ and $v_{2}^{\perp}$ in the previous discussion, and
  so there are real numbers $a_{\sigma ,\rho }$ and
  $b_{\sigma ,\rho }$ such that
  \begin{equation}\label{eq:32}
    v_{\sigma \setminus \tau }
    = a_{\sigma ,\rho } \, v_{\tau  \setminus \rho }+
    b_{\sigma ,\rho } \, v_{\tau ' \setminus \rho } \and 
    v_{\sigma \setminus \tau '}
    = a_{\sigma ,\rho } \, v_{\tau ' \setminus \rho }+
    b_{\sigma ,\rho } \, v_{\tau  \setminus \rho }.  
  \end{equation}
  We compute
  \begin{align*}
    (f\cdot(g\cdot c))(\rho )&= \sum_{\tau    \succ \rho }
\bigg(     \sum_{\sigma  \succ \tau }
    c(\sigma )\,  g_{\sigma }(v_{\sigma \setminus \tau }) \bigg) \,  f_{\tau
    }(v_{\tau \setminus \rho })\\
    &= \sum_{\tau
    \succ \rho}
    \sum_{\sigma  \succ \tau }
    c(\sigma )\,  g_{\sigma 
    }(a_{\sigma ,\rho }\, v_{\tau\setminus \rho }
    + b_{\sigma ,\rho }\, v_{\tau'\setminus \rho  })\,  f_{\tau
    }(v_{\tau \setminus \rho })\\
    &=\sum_{\sigma  \succ \rho}
    c(\sigma )\, a_{\sigma ,\rho }\bigg( \hspace{2mm}\sum_{\mathclap{\sigma \succ \tau
    \succ \rho }}  f_{\tau
    }(v_{\tau \setminus \rho })\,  g_{\tau 
    }(v_{\tau\setminus \rho })\bigg) \\
    &\phantom{AAA}+\sum_{\sigma  \succ \rho}
    c(\sigma )\, b_{\sigma ,\rho } \bigg( \hspace{2mm}\sum_{\mathclap{\sigma \succ \tau
    \succ \rho }}  f_{\tau
    }(v_{\tau \setminus \rho })\,  g_{\tau' 
    }(v_{\tau'\setminus \rho }) \bigg),
  \end{align*}
  where the indexes $\tau$ and $\sigma$ go over the sets of polyhedra
  $\Pi|_{U}(k-1)$ and $\Pi|_{U}(k)$, respectively.  The first equality
  is the definition of the product, the second follows from the first
  equation in \eqref{eq:32}, and the third comes from the linearity of
  $ g_{\sigma }$ combined with the fact that
  $ g_{\sigma }|_{N_{\tau} }= g_{\tau }$ and
  $ g_{\sigma }|_{N_{\tau'} }= g_{\tau' }$.

  Next, we interchange the roles of $\tau $ and $\tau '$ in the inner
  sum of the second term of the last expression and revert the
  previous argument, applying this time the linearity of
  $ f_{\sigma }$, the second equation in \eqref{eq:32} and again the
  definition of the product to obtain that
  \begin{align*}
(   f\cdot(g\cdot c))(\rho )&=\sum_{\sigma  \succ \rho}
    c(\sigma )\, a_{\sigma ,\rho }\bigg( \hspace{2mm} \sum_{\mathclap{\sigma \succ \tau
    \succ \rho }}  f_{\tau
    }(v_{\tau \setminus \rho })\,  g_{\tau 
    }(v_{\tau\setminus \rho })\bigg) \\
    &\phantom{AAA}+\sum_{\sigma  \succ \rho}
    c(\sigma )\, b_{\sigma ,\rho }\bigg( \hspace{2mm} \sum_{\mathclap{\sigma \succ \tau
    \succ \rho }}  f_{\tau'
    }(v_{\tau' \setminus \rho })\,  g_{\tau 
    }(v_{\tau\setminus \rho })\bigg) \\
        &= 
    \sum_{\tau
    \succ \rho }
              \sum_{\sigma  \succ \tau }
    c(\sigma )\,  f_{\sigma 
    }(a_{\sigma ,\rho }\, v_{\tau\setminus \rho }
    + b_{\sigma ,\rho }\, v_{\tau'\setminus \rho  })\,  g_{\tau
    }(v_{\tau \setminus \rho })\\
    &=\sum_{\tau  \succ \rho} \bigg ( 
    \sum_{\sigma \succ \tau} c(\sigma )\,   f_{\sigma 
    }(v_{\sigma \setminus \tau  }) \bigg) \,  g_{\tau 
    }(v_{\tau\setminus \rho })\\
    &=(g\cdot (f\cdot c))(\rho ),
  \end{align*}
  which proves the statement.
\end{proof}

In view of the definition of the product, we give the
following interpretation of the condition for an arbitrary weight to
be a Minkowski weight.

\begin{lem}
 \label{lemm:1} 
  Let $\Pi$ be polyhedral complex on $X$ and $c$ a weight on
  $\Pi|_{U}$.  Then $c$ is a Minkowski weight if and only if for
  every linear function $\ell\colon N_{X} \to \R$,
  \begin{displaymath}
    (\ell\circ \iota_{X})\cdot c = 0.
  \end{displaymath}
\end{lem}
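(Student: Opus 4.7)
The plan is to unwind the definitions of both sides and reduce the equivalence to the elementary fact that a vector in a Euclidean space vanishes if and only if it is killed by every linear functional.

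First I would observe that for any linear function $\ell \colon N_X \to \R$, the composition $\ell \circ \iota_X$ is continuous on $X$ and its restriction to each polyhedron $\sigma \in \Pi$ is affine, since $\iota_X|_\sigma$ is affine by the definition of quasi-embedded polyhedral space. Hence $\ell \circ \iota_X$ is a piecewise affine function on $X$ (and in particular on $U$) defined on $\Pi$, so the product $(\ell \circ \iota_X) \cdot c$ is well-defined via Definition \ref{def:intersection-product}. Moreover, for each $\sigma \in \Pi|_U$ one may take the linearization $(\ell \circ \iota_X)_\sigma = \ell$ itself, since $(\ell \circ \iota_X)|_\sigma = \ell \circ \iota_{X,\sigma} \circ \phi_\sigma$.

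Next, I would apply formula \eqref{eq:1} to compute, for each $\tau \in \Pi|_U(k-1)$,
\begin{equation*}
  ((\ell \circ \iota_X) \cdot c)(\tau)
  = -\sum_{\sigma \succ \tau} c(\sigma)\, \ell(v_{\sigma\setminus\tau})
  = -\ell\Bigl(\sum_{\sigma \succ \tau} c(\sigma)\, v_{\sigma\setminus\tau}\Bigr),
\end{equation*}
where the last equality uses the linearity of $\ell$, and the sum runs over $\sigma \in \Pi|_U(k)$ with $\sigma \succ \tau$.

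From here both implications are immediate. If $c$ is a Minkowski weight, then the vector inside $\ell$ is zero by \eqref{eq:36}, so $(\ell \circ \iota_X) \cdot c$ vanishes on every $(k-1)$-dimensional polyhedron of $\Pi|_U$ (and by convention is zero on polyhedra of other dimensions), proving one direction. Conversely, assume $(\ell \circ \iota_X) \cdot c = 0$ for every linear $\ell \colon N_X \to \R$. Then for each fixed $\tau$, the vector $w_\tau := \sum_{\sigma \succ \tau} c(\sigma)\, v_{\sigma\setminus\tau} \in N_X$ satisfies $\ell(w_\tau) = 0$ for all linear $\ell$; since linear functionals separate points of the finite-dimensional space $N_X$, this forces $w_\tau = 0$, which is precisely the balancing condition \eqref{eq:36}. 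I do not foresee any real obstacle here; the lemma is essentially a bookkeeping reformulation of the definition of a Minkowski weight in terms of the product with piecewise affine functions pulled back from linear forms on $N_X$.
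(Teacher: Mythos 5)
Your proof is correct and follows essentially the same route as the paper: compute $((\ell\circ\iota_X)\cdot c)(\tau) = -\ell\big(\sum_{\sigma\succ\tau} c(\sigma)\,v_{\sigma\setminus\tau}\big)$ and invoke that linear functionals separate points of $N_X$. The only difference is that you spell out the preliminary bookkeeping (that $\ell\circ\iota_X$ is piecewise affine on $\Pi$ with linearization $\ell$), which the paper leaves implicit.
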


\begin{proof}
For $\tau \in  \Pi|_U (k-1)$ we have that
  \begin{equation*}
    ((\ell\circ \iota_{X})\cdot c)(\tau )= - \sum_{\sigma \succ
      \tau }c(\sigma ) \, \ell(v_{\sigma \setminus \tau })
    = - \ell\Big(\sum_{\sigma \succ \tau }c(\sigma ) \, v_{\sigma \setminus \tau }\Big).
\end{equation*}
The statement follows from the fact that, for a vector $v \in N_{X}$,
the condition $v=0$ is equivalent to $\ell(v)=0$ for every linear
function $\ell$.
\end{proof}

\begin{prop}
  \label{prop:19}
  Let $\Pi$ be a polyhedral complex on $X$ and $c$ a $k$-dimensional
  Minkowski weight on $\Pi|_U$.
  \begin{enumerate}
  \item \label{item:15} If $V $ is an open subset of $U$, then
    $c|_{V }$  is  a $k$-dimensional Minkowski weight on $\Pi|_{V }$.  
  \item \label{item:11} If $\Pi '$ is a subdivision of $\Pi $, then
    $c_{\Pi'}$ is a $k$-dimensional Minkowski weight on $\Pi'|_{U}$.
  \item \label{item:12} If $f$ is a piecewise affine function on $U$ defined
    on~$\Pi$, then $ f\cdot c$ is a $(k-1)$-dimensional Minkowski
    weight on $\Pi|_{U}$.
     \end{enumerate}
\end{prop}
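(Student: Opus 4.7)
The plan is to deduce all three statements uniformly from Lemma~\ref{lemm:1}, which characterizes Minkowski weights on $\Pi|_U$ as those weights $c$ satisfying $(\ell\circ\iota_{X})\cdot c = 0$ for every linear function $\ell\colon N_{X}\to\R$. Combined with the compatibility identities of Proposition~\ref{prop:12} (for parts \eqref{item:15} and \eqref{item:11}) and with the commutativity of Proposition~\ref{prop:4} (for part \eqref{item:12}), this converts each assertion into a short algebraic manipulation. Note that for any linear $\ell$, the composition $\ell\circ\iota_{X}$ is piecewise affine on $X$ and defined on $\Pi$ as well as on any subdivision $\Pi'$, since $\iota_{X}$ is affine on every polyhedron; this ensures that the relevant products and compatibility formulas are available.

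For \eqref{item:15}, fix a linear $\ell\colon N_{X}\to\R$. Since $c$ is Minkowski, Lemma~\ref{lemm:1} gives $(\ell\circ\iota_{X})\cdot c = 0$ as a weight on $\Pi|_U$. Restricting this identity to $V$ and using the second equality of Proposition~\ref{prop:12}, one obtains
\[
(\ell\circ\iota_{X})|_{V}\cdot c|_{V} \;=\; \bigl((\ell\circ\iota_{X})\cdot c\bigr)\big|_{V} \;=\; 0
\]
on $\Pi|_{V}$. As $\ell$ was arbitrary, a second application of Lemma~\ref{lemm:1} yields that $c|_{V}$ is Minkowski.

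For \eqref{item:11}, the reasoning is parallel but with pullback in place of restriction. Starting again from $(\ell\circ\iota_{X})\cdot c = 0$, the third equality of Proposition~\ref{prop:12} gives
\[
(\ell\circ\iota_{X})\cdot c_{\Pi'} \;=\; \bigl((\ell\circ\iota_{X})\cdot c\bigr)_{\Pi'} \;=\; 0
\]
on $\Pi'|_U$, and Lemma~\ref{lemm:1} then shows that $c_{\Pi'}$ is Minkowski.

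For \eqref{item:12}, commutativity is the key ingredient. By Proposition~\ref{prop:4} and the Minkowski property of $c$,
\[
(\ell\circ\iota_{X})\cdot(f\cdot c) \;=\; f\cdot\bigl((\ell\circ\iota_{X})\cdot c\bigr) \;=\; f\cdot 0 \;=\; 0,
\]
so a final appeal to Lemma~\ref{lemm:1} completes the argument. No serious obstacle is anticipated: the proof is a bookkeeping exercise that leverages the lemma to turn the balancing condition into a property manifestly stable under each of the three natural operations on weights; all the real work has already been done in Propositions~\ref{prop:12} and~\ref{prop:4}.
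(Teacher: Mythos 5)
Your proof is correct, and parts \eqref{item:11} and \eqref{item:12} follow exactly the argument in the paper: apply Lemma~\ref{lemm:1} to convert the Minkowski condition into the vanishing of $(\ell\circ\iota_X)\cdot c$ for all linear $\ell$, then push that vanishing across the relevant operation via Proposition~\ref{prop:12} (pullback compatibility) or Proposition~\ref{prop:4} (commutativity). The only deviation is in part \eqref{item:15}: the paper dismisses it as direct from the definitions (indeed, for $\tau\in\Pi|_V(k-1)$, the set of $\sigma\in\Pi|_V(k)$ with $\sigma\succ\tau$ coincides with the set of such $\sigma$ in $\Pi|_U(k)$, since $\tau\cap V\ne\emptyset$ and $\tau\subset\sigma$ force $\sigma\cap V\ne\emptyset$; so the balancing identity at $\tau$ is literally the same equation), whereas you route it through the same Lemma~\ref{lemm:1} plus the restriction compatibility of Proposition~\ref{prop:12}. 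That substitution is valid and makes the three parts uniform, at the cost of invoking slightly heavier machinery than the one-line observation actually requires.
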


\begin{proof}
  The statement in \eqref{item:15} is direct from the definitions.

  To prove \eqref{item:11}, take a linear form $\ell $ on $N_{X}$ and
  set $h=\ell\circ \iota_{X}$ for short. Proposition~\ref{prop:12} and
  Lemma~\ref{lemm:1} imply that
    \begin{displaymath}
      h \cdot c_{\Pi'} = ( h \cdot c)_{\Pi'}=0.
    \end{displaymath}
    Since this holds for every $\ell$, Lemma \ref{lemm:1} implies that
    $ c_{\Pi'|_{U}}$ is a $k$-dimensional Minkowski weight on
    $\Pi'|_{U}$, proving the statement.

    For \eqref{item:12}, taking again a linear form $\ell $ on $N_{X}$
    and setting $h=\ell \circ \iota_{X}$, we get from Proposition
    \ref{prop:12} and Lemma \ref{lemm:1} that
  \begin{displaymath}
    h\cdot (f\cdot c)=f\cdot (h\cdot c)=0.
  \end{displaymath}
  Since this holds for every $\ell$, Lemma \ref{lemm:1} again implies
  that $f\cdot c$ is a $(k-1)$-dimensional Minkowski weight
  on~$\Pi|_{U}$, as stated.
\end{proof}

Hence the operations of restriction to an open subset, pullback to a
subdivision and product with piecewise affine functions induce the
families of homomorphisms that in the notation of Proposition
\ref{prop:19}, write down, for each $k\in\Z_{\ge 0}$, as
\begin{align}
  \label{eq:35}
&    M_{k}(\Pi|_{U})   \longrightarrow M_{k}(\Pi|_{V }), \quad
  c\longmapsto c|_{V }, \\
  \label{eq:37}
  & M_{k}(\Pi|_{U})   \longrightarrow M_{k}(\Pi'|_{U}),  \quad
  c\longmapsto c_{\Pi'},\\ 
\label{eq:38}
  & \PA_{\Pi}(U)\times M_{k}(\Pi|_{U})   \longrightarrow M_{k-1}(\Pi|_{U}), \quad
(f,  c)\longmapsto f\cdot c.
\end{align}

\begin{defn}\label{def:14}
For $k\in \Z_{\ge 0}$, the space of  \emph{Minkowski cycles} on $U$ of
dimension $k$ is
  defined as the direct limit
  \begin{displaymath}
    Z_{k}(U)=\varinjlim_{\Pi }M_{k}(\Pi|_U),
  \end{displaymath}
  taken over the directed set of polyhedral complexes on $X$ ordered
  by subdivision, and where the map corresponding to each pair
  $\Pi' \ge \Pi$ is the pullback homomorphism in
  \eqref{eq:37}.

  Given a Minkowski weight $c \in M_{k}(\Pi|_{U})$, we denote by
  $[c]\in Z_{k}(U)$ the associated Minkowski cycle. Conversely, given
  a Minkowski cycle $\gamma \in Z_{k}(U)$ and a polyhedral complex
  $\Pi $ on $X$, we say that $\gamma$ is \emph{defined on $\Pi $} if
  there is $c\in M_{k}(\Pi|_{U})$ such that $\gamma=[c]$. The
  \emph{support} of a Minkowski cycle $\gamma $ is the support of any
  Minkowski weight representing it.
  
  The Minkowski cycle $\gamma$ is \emph{positive} if it can be
  represented by a positive Minkowski weight on $\Pi|_{U}$. We denote
  by $Z_{k}^{+}(U)$ the cone of positive Minkowski cycles on $U$ of
  dimension $k$.
\end{defn}

The compatibility between the pull-back of Minkowski weights to
subdivisions on the one hand and the restriction to open subsets and
the product with piecewise affine functions on the other
(Proposition~\ref{prop:12}) allows to define the corresponding
operations for Minkowski cycles. Namely, from \eqref{eq:35} and
\eqref{eq:38} we derive the families of homomorphisms given, for
$k\in \Z_{\ge 0}$, by
\begin{align*}
&    Z_{k}(U)   \longrightarrow Z_{k}({V }), \quad
                  \gamma\longmapsto \gamma|_{V }:=[c|_{V }] , \\
  \label{eq:21}
  & \PA(U) \times Z_{k}({U})   \longrightarrow Z_{k-1}({U}), \quad
(f,  \gamma)\longmapsto f\cdot \gamma :=[f\cdot c]
\end{align*}
for any polyhedral complex $\Pi$ on $X$ and
$c\in M_{k}(\Pi|_{U})$ with $\gamma=[c]$.

  We next introduce the notions of balancing condition and balanced
  polyhedral space.

  \begin{defn}\label{def:balanced}
    Suppose that the Euclidean polyhedral space $X$ has pure dimension
    $n$. A \emph{balancing condition} on $U$ is an $n$-dimensional
    Minkowski cycle $\beta$ on $U$ that is represented by an
    $n$-dimensional Minkowski weight $b$ on $\Pi|_{U}$ for a
    polyhedral complex $\Pi$ on $X$, and such that $ b(\sigma )>0$ for
    all $\sigma \in \Pi |_{U}(n)$.  The pair $(U,\beta )$ is called a
    \emph{balanced open subset}.  When $U=X$, the pair $(X,\beta)$ is
    called a \emph{balanced polyhedral space}.  A \emph{polyhedral complex}
    $\Pi$ on a balanced open subset $(U,\beta)$ is a polyhedral complex
    such that the balancing condition $\beta$ is defined on $\Pi$.
    
    The Euclidean polyhedral space $X$ is \emph{balanceable} if it
    admits a balancing condition on $X$. It is \emph{locally
      balanceable} if there is an open covering $X=\bigcup_{i} U_{i}$
    admitting a balancing condition on each $U_{i}$.
\end{defn}

We will usually denote a balanced open subset by its underlying subset
$U$ and, in this case, we will denote the corresponding balancing
condition by $\beta_{U}$.

\begin{exmpl}\label{exm:2}
  For $n\in \Z_{\ge 0}$, the vector space $\R^{n}$ can be given a
  structure of a balanced polyhedral space by considering the
  polyhedral complex $\Pi$ consisting of the single polyhedron
  $\sigma =\R^{n}$ embedded in the Euclidean vector space
  $\R^{n+1}$
  through the map $x\mapsto(x,1)$, and the Minkowski weight defined by
  $\beta_{\R^{n}}(\sigma)=1$. 
\end{exmpl}

  \begin{rem}
    \label{rem:101}
    As in Example \ref{exm:2}, in many situations the space $N$ will
    consist of the Euclidean space $\R^{n+1}$ and $H$ of the
    hyperplane $(x_{n+1}=1)$. In those situations we will only
    explicit the hyperplane $H$, whereas the space $N$ will be tacitly
    assumed.
  \end{rem}

\begin{exmpl}\label{exm:6}
  Let $X$ be a Euclidean polyhedral space, $U\subset X$ an open
  subset and $\gamma \in Z^{+}_{k}(U)$ a positive Minkowski
  cycle. Then the support $|\gamma |$ has an induced structure of
  Euclidean polyhedral space of pure dimension $k$ with a balancing
  condition on $|\gamma |\cap U$, given by the restriction of the
  cycle $\gamma$ to its support. In particular, if $U=X$ then
  $|\gamma |$ is balanced.
\end{exmpl}

\begin{exmpl}\label{exm:13}
  Not every Euclidean polyhedral space is balanceable. For instance,
  the ray $\R_{\ge 0}$ with its standard structure of a Euclidean
  polyhedral space given by the polyhedra $\{0\}$ and $\R_{\ge 0}$, is
  not balanceable.  Indeed, let $\Pi $ be a polyhedral complex on
  $\R_{\ge0}$ and set $\tau =\{0\}\in \Pi (0)$. There is a unique
  polyhedron $\sigma \in \Pi (1)$ with $\tau \prec \sigma $ and, for
  any $c\in M_{1}(\Pi )$, the condition \eqref{eq:36} boils down to
 \begin{displaymath}
    c(\sigma ) v_{\sigma \setminus \tau }=0.
  \end{displaymath}
Hence $c(\sigma)=0$ and so  no balancing condition can be
defined on $\Pi$.

More generally, any convex polyhedron $\Delta \subset \R^{n}$ of
maximal dimension is a Euclidean polyhedral space with the structure
induced from that of $\R^{n}$ in Example~\ref{exm:2} but, unless
$\Delta= \R^{n}$, it is not balanceable.
\end{exmpl}

\begin{exmpl}
  Let $X$ be the Euclidean polyhedral space depicted in Figure
  \ref{fig:locally_bal}, where $ABC$ is an equilateral triangle, $L$ is
  the bisector of the angle opposed to $A$ and $M$ is the bisector of
  the angle opposed to $B$. Then $X$ is balanceable if and only if $N$
  is the bisector of the angle opposed to $C$, while it is locally
  balanceable if an only if $N$ is contained in the interior of the
  angle opposed to $C$.

  \begin{figure}[ht]
  \centering
  \input{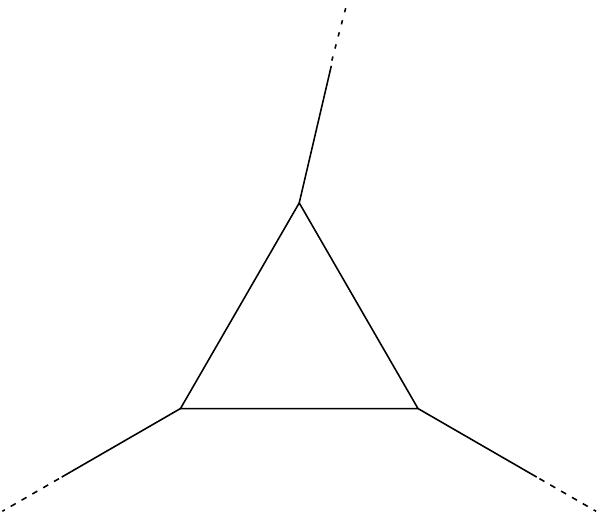_t}
  \caption{A locally balanceable space}
  \label{fig:locally_bal}
  \end{figure}
\end{exmpl}

We end this section discussing the change of Euclidean structure.

\begin{defn}\label{def:9}
  Let $X'$ be another Euclidean polyhedral space that has the same
  underlying quasi-embedded polyhedral space of $X$ and a possibly
  different Euclidean structure. Let $\Pi $ be a polyhedral complex on
  $X$ and $\Pi '$ the corresponding polyhedral complex on $X'$. For
  $k\in \Z_{\ge 0}$, the \emph{change of Euclidean structure} on the
  open subset $U$ is the morphism
\begin{math}
\varphi_{X',X}\colon W_{k}(\Pi |_{U})\to W_{k}(\Pi' |_{U}) 
\end{math}
given, for $c\in  W_{k}(\Pi |_{U})$ and $\sigma\in \Pi|_{U}(k)$, by
\begin{displaymath}
  \varphi_{X',X}(c)(\sigma)={\frac{\vol_{X'}(\sigma
      )}{\vol_{X}(\sigma)}}\, c(\sigma ),
\end{displaymath}
where $\vol_{X'}(\sigma)/\vol_{X}(\sigma)$ denotes the ratio between
the Haar measures on the affine space $N_{\sigma}$ induced by the
Euclidean structures of $X'$ and $X$.
\end{defn}

\begin{rem}\label{rem:6}
  It follows directly form the definition that for three Euclidean
  structures $X$, $X'$ and $X''$ on the same polyhedral space, the
  change of Euclidean structure maps induced by a polyhedral complex on $X$ verify that
  \begin{displaymath}
    \varphi_{X'',X}=\varphi_{X'',X'}\circ \varphi_{X',X}.
  \end{displaymath}
\end{rem}

\begin{prop}\label{prop:5}
  With notation as in Definition \ref{def:9}, let
  $c\in M_{k}(\Pi |_{U})$ and $f\in \PA_{\Pi }(U)$. Then
  \begin{displaymath}
    \varphi_{X',X}(c)\in M_{k}(\Pi '|_{U}) \and \varphi_{X',X}(f\cdot
    c)=f\cdot \varphi_{X',X}(c).
  \end{displaymath}
\end{prop}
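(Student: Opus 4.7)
The plan is to reduce both claims to a single linear-algebra identity comparing $v_{\sigma\setminus\tau}$ and $v'_{\sigma\setminus\tau}$. Fix $\tau\in\Pi|_U(k-1)$ and $\sigma\in\Pi|_U(k)$ with $\tau\prec\sigma$, and let $V_\sigma\subset N_X$ denote the linear direction of the affine span of $\iota_X(\sigma)$ and $W\subset V_\sigma$ the analogous subspace for $\tau$. Let $g,g'$ be the two Euclidean inner products on $N_X$ coming from $X$ and $X'$, and set
\[
R_\sigma=\frac{\vol_{X'}(\sigma)}{\vol_X(\sigma)} \and R_\tau=\frac{\vol_{X'}(\tau)}{\vol_X(\tau)}.
\]
The core identity I would establish is
\[
R_\sigma\, v'_{\sigma\setminus\tau} \;=\; R_\tau\, v_{\sigma\setminus\tau} \;+\; \tilde w_\sigma \qquad \text{for some }\tilde w_\sigma\in W. \tag{$\star$}
\]

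To prove $(\star)$, pick a $g$-orthonormal basis $e_1,\dots,e_{k-1}$ of $W$, which extends to a $g$-orthonormal basis $e_1,\dots,e_{k-1},v_{\sigma\setminus\tau}$ of $V_\sigma$. Write $v'_{\sigma\setminus\tau}=\alpha\,v_{\sigma\setminus\tau}+w$ with $w\in W$ and $\alpha>0$ (positivity because both vectors point from $\tau$ into $\sigma$). The $g'$-orthogonality of $v'_{\sigma\setminus\tau}$ to $W$ determines $w$ linearly in terms of $\alpha$, and the $g'$-unit length condition combined with the Schur complement identity then gives $\alpha^2=\det G'_W/\det G'$, where $G'$ and $G'_W$ are the Gram matrices of $g'$ in the two chosen bases. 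Since $R_\sigma=\sqrt{\det G'}$ and $R_\tau=\sqrt{\det G'_W}$, this yields $R_\sigma\alpha=R_\tau$, and $(\star)$ holds with $\tilde w_\sigma=R_\sigma w\in W$.

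For the first assertion, summing $(\star)$ over $\sigma\succ\tau$ weighted by $c(\sigma)$ yields
\[
\sum_{\sigma\succ\tau}\varphi_{X',X}(c)(\sigma)\,v'_{\sigma\setminus\tau} = R_\tau\sum_{\sigma\succ\tau}c(\sigma)\,v_{\sigma\setminus\tau} + \sum_{\sigma\succ\tau}c(\sigma)\,\tilde w_\sigma.
\]
The left-hand side lies in the $g'$-orthogonal complement of $W$ (each $v'_{\sigma\setminus\tau}$ is $g'$-orthogonal to $W$), the first sum on the right vanishes by the $g$-balancing of $c$, and the second sum lies in $W$. Since $W$ meets its $g'$-orthogonal complement only in $\{0\}$, both sides vanish, proving that $\varphi_{X',X}(c)\in M_k(\Pi'|_U)$ and incidentally establishing the auxiliary identity $\sum_\sigma c(\sigma)\,\tilde w_\sigma=0$.

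For the second assertion, using $(\star)$ together with the equality $f_\sigma|_W=f_\tau|_W$ (which holds because $f$ is continuous and affine on both $\sigma$ and $\tau$, as in the proof of Proposition~\ref{prop:4}), one computes
\[
(f\cdot\varphi_{X',X}(c))(\tau) = -\sum_{\sigma\succ\tau}c(\sigma)\,f_\sigma\bigl(R_\tau v_{\sigma\setminus\tau}+\tilde w_\sigma\bigr) = R_\tau(f\cdot c)(\tau) - f_\tau\Bigl(\sum_{\sigma\succ\tau}c(\sigma)\,\tilde w_\sigma\Bigr),
\]
and the last term vanishes by the auxiliary identity from the first assertion, leaving $R_\tau(f\cdot c)(\tau)=\varphi_{X',X}(f\cdot c)(\tau)$. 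The main difficulty lies in establishing $(\star)$; once that identity is in hand, both statements fall out of the short summations above.
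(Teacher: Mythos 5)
Your argument is correct and follows essentially the same route as the paper: multiply the paper's orthogonal decomposition $v'_{\sigma\setminus\tau}=\langle v'_{\sigma\setminus\tau},v_{\sigma\setminus\tau}\rangle\,v_{\sigma\setminus\tau}+w_{\sigma\setminus\tau}$ by $R_\sigma$ and use the volume-ratio relation $R_\tau=\langle v'_{\sigma\setminus\tau},v_{\sigma\setminus\tau}\rangle\,R_\sigma$ to get your $(\star)$, then sum against $c(\sigma)$ and conclude by the incompatibility of ``parallel to $W$'' with ``$g'$-orthogonal to $W$''. The only difference is that you spell out the Schur-complement/Gram-determinant derivation of the volume-ratio identity, which the paper states without proof; the rest (including the use of $f_\sigma|_W=f_\tau|_W$ and the vanishing of $\sum_\sigma c(\sigma)\,\tilde w_\sigma$) matches the paper's computation line by line.
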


\begin{proof}
  We denote with a prime the objects relative to $X'$. For instance,
  for $\tau \in \Pi |_{U}(k-1)$ and $\sigma \in \Pi |_{U}(k)$ with
  $\tau \prec \sigma $, the symbol $v'_{\sigma \setminus \tau }$
  denotes the unit normal vector to $\tau $ in the direction of
  $\sigma $ for the Euclidean structure of $X'$, while
  $v_{\sigma \setminus \tau }$ denotes that for $X$.

  We decompose this vector as
  \begin{equation*}
    v'_{\sigma \setminus \tau }=\langle v'_{\sigma \setminus \tau
    },v_{\sigma \setminus \tau }\rangle \, v_{\sigma \setminus \tau }
    +w_{\sigma \setminus \tau },
  \end{equation*}
  where the scalar product is computed in $X$ and
  $w_{\sigma \setminus \tau }$ is a vector parallel to  
  $ \iota_{X,\tau }(H_{\tau })$. The ratio between the Haar
  measures induced on the affine spaces $H_{\tau} $ and $H_{\sigma} $
  are related by
  \begin{equation*}
    \frac{\vol_{X'}(\tau )}{\vol_{X}(\tau)} =
\langle v'_{\sigma \setminus \tau
    },v_{\sigma \setminus \tau }\rangle \, 
    \frac{\vol_{X'}(\sigma)}{\vol_{X}(\sigma)}  .
  \end{equation*}
  To prove that $\varphi_{X',X}(c)$ is a Minkowski weight we
  compute, for $\tau\in \Pi|_{U}(k-1)$,
  \begin{multline}\label{eq:53}
    \sum_{\sigma \succ \tau } \frac{\vol_{X'}(\sigma
      )}{\vol_{X}(\sigma )}\, c(\sigma )\, 
    v'_{\sigma \setminus \tau
    }=
    \sum_{\sigma \succ \tau } \frac{\vol_{X'}(\sigma
      )}{\vol_{X}(\sigma )}\, c(\sigma )\, (
      \langle v'_{\sigma \setminus \tau
    },v_{\sigma \setminus \tau }\rangle v_{\sigma \setminus \tau }
    +w_{\sigma \setminus \tau })\\ =
  \frac{\vol_{X'}(\tau 
      )}{\vol_{X}(\tau )} \Big( \sum_{\sigma \succ \tau }c(\sigma ) \, 
    v_{\sigma \setminus \tau }\Big) +w_{\tau } =w_{\tau }
  \end{multline}
  for some vector $w_{\tau }$ parallel to
  $\iota_{X,\tau }(H_{\tau })$. Since $w_{\tau}$ is a linear
  combination of the vectors $v'_{\sigma \setminus \tau}$,
  $\sigma\succ \tau$, and these vectors are orthogonal to the affine
  subspace $\iota_{X,\tau }(H_{\tau })$ with respect to the Euclidean
  structure of $X'$, we deduce that $w_{\tau }=0$, proving the first
  statement.

  To prove the second statement, we compute, using the notation
  in~\eqref{eq:53} and the fact that $w_{\tau }=0$,
\begin{multline*}
  (f\cdot \varphi_{X',X}(c))(\tau )= -
  \sum_{\sigma \succ \tau } \frac{\vol_{X'}(\sigma
      )}{\vol_{X}(\sigma )}\, c(\sigma ) \, 
f_{\sigma }(v'_{\sigma \setminus \tau
    } ) \\= - \frac{\vol_{X'}(\tau 
      )}{\vol_{X}(\tau )}\Big(\sum_{\sigma \succ \tau }c(\sigma) \,
     f_{\sigma}(v_{\sigma \setminus \tau }) \Big) -  f_{\tau }(w_{\tau })=
    \varphi_{X',X}(f\cdot c)(\tau ).
\end{multline*}
\end{proof}

Thanks to this result, we can consider the change of Euclidean
  structure map from Definition \ref{def:9} at the level of Minkowski
  weights and Minkowski cycles:
\begin{equation}
  \label{eq:55}
  \varphi_{X',X} \colon M_{k}(\Pi |_{U})\to M_{k}(\Pi' |_{U}) \and
\varphi_{X',X}\colon Z_{k}(U)\to Z_{k}(U').
\end{equation}
They also satisfy the composition property of Remark  \ref{rem:6}.

\section{Concave piecewise affine functions on polyhedral spaces}
\label{sec:conc-conv-funct}

In this section we introduce and discuss the different notions of
concavity for a piecewise affine function on an open subset of a
polyhedral space.  We denote by $X$ a quasi-embedded polyhedral space
and $U$ an open subset of it.

First we recall the definition of a concave function defined on a
vector space and possibly taking infinite values.

\begin{defn}\label{def:7}
  A function $f\colon \R^{n}\to \R\cup \{\pm \infty\}$ is 
  \emph{concave} if its \emph{hypograph}
  \begin{displaymath}
    \hypo(f)=\left\{(x,z)\in \R^{n}\times \R\,\mid\, z \le f(x)\right\} 
  \end{displaymath}
  is a convex subset of $\R^{n }\times \R$.
\end{defn}

For a function $f\colon \R^{n}\to \R\cup \{-\infty\}$, being concave
in the sense of Definition \ref{def:7} is equivalent to the usual
condition that, for all $x,x'\in \R^{n}$ and
$\nu_{1}, \nu_{2}\in \R_{\ge 0}$ with $\nu_{1}+\nu_{2}=1$,
\begin{displaymath}
  f(\nu_{1}\, x+\nu_{2}\, x') \ge   \nu_{1}\, f(x) + \nu_{2}\, f(x').
\end{displaymath}
However, this definition cannot be directly extended to functions on
the open subset $U$, as we do not dispose of a notion of convexity for
subsets of a polyhedral space. Instead, we can use the quasi-embedding
to pullback the notion of concavity for functions on the vector space
$N_{X}$.

\begin{defn}\label{def:6} Let $f$ be a piecewise
  affine function on $U$. Then $f$ is \emph{strongly concave} if there
  is a concave function $f'\colon H_{X}\to \R\cup \{\pm \infty\}$ as
  in Definition \ref{def:7}, such that
  \begin{displaymath}
    f=(f'\circ \iota_{X}) |_{U}.
  \end{displaymath}
  The piecewise affine function $f$ is \emph{locally strongly concave}
  if there is an open covering $U=\bigcup_{i} U_{i}$ such that
  $f|_{U_{i}}$ is strongly concave for every $i$.
\end{defn}

Similar definitions have been proposed in the context of tropical
geometry, as in \cite[Section 1.5]{Rau}.  However, neither of them is
completely satisfactory. On the one hand, the notion of being strongly
concave is not local as shown by the next example but, as we will see
in Proposition \ref{prop:9}, can be extended to a notion that is
stable under the operation of taking the infimum. On the other hand,
the notion of being locally strongly concave is local, but as also
shown in the next example, cannot be extended to a notion that is
stable under the operation of taking the infimum.

\begin{exmpl}
  \label{exm:8}
  Let $X$ be the $1$-dimensional polyhedral space shown in
  Figure~\ref{fig:pol-comp} and consisting of the four polyhedra
  $\sigma _{i}=\R_{\ge 0}\times \{i\}$, $i=1,\dots,4$, glued together
  by the points $(0,i)$. Set $H=\R^{2}$ and let $\iota \colon X\to H$
  be the quasi-embedding defined, for $x\in \R_{\ge0}$ and
  $ i\in \{1,2,3,4\}$, by
  \begin{displaymath}
    \iota(x,i)=
    \begin{cases}
      (0,x)&\text{ if }i=1,\\
      (0,-x)&\text{ if }i=2,\\
      (x,x)&\text{ if }i=3,\\
      (-x,x)&\text{ if }i=4.
    \end{cases}
  \end{displaymath}
  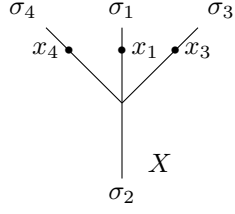
\begin{figure}[h]
    \begin{center}
      \begin{tikzpicture}[scale=1]
        \draw (0,0) -- (0,1);
        \draw (0,0) -- (1,1);
        \draw (0,0)--(-1,1);
        \draw (0,0)--(0,-1); 
        \draw (0.5,-0.8) node{$X$};
        \draw (1,1) node[above right]{$\sigma_3$};
        \draw (-1,1) node[above left]{$\sigma_4$};
        \draw (0,-1) node[below]{$\sigma_2$};
        \draw (0,1) node[above]{$\sigma_1$};
        \draw (0.7, 0.7) node{\tiny{$\bullet$}};
        \draw (0.7, 0.7) node[right]{$x_{3}$};
        \draw (-0.7,0.7) node{\tiny{$\bullet$}};
        \draw (-0.7,0.7) node[left]{$x_4$};
        \draw (0.,0.7) node{\tiny{$\bullet$}};
        \draw (0.,0.7) node[right]{$x_1$};
      \end{tikzpicture}
    \end{center}
    \caption{A polyhedral complex}\label{fig:pol-comp}
  \end{figure}

  For each integer $n\ge 2$ consider the piecewise affine function
  $f_{n}\colon X\to \R$ given by
  \begin{displaymath}
    f_{n}(x,i)=
    \begin{cases}
\displaystyle
      \min\Big(0,\frac{1}{n}-x\Big)&\text{ if } i=1,\\
      0&\text{ otherwise}.
    \end{cases}
  \end{displaymath}
  It is not strongly concave because for the points 
  $x_{1}=\iota (1,1)$, $x_{3}=\iota (1,3)$ and $x_{4}=\iota (1,4)$
  shown in Figure \ref{fig:pol-comp} we have that
  $ \iota (x_{1})=\frac{1}{2} \, x_{3}+ \frac{1}{2}\, x_{4}$ and
  \begin{displaymath}
    f_{n}(1,1)=\frac{1}{n}-1 <  0=\frac{1}{2} \, f_{n}(1,3)+ \frac{1}{2} \, f_{n}(1,4). 
  \end{displaymath}
  Hence $f_{n}$ is not the pullback to $X$ of a concave function on
  $H$.  On the other hand, this piecewise affine function is locally
  strongly concave, as it can be seen by considering the covering
  $X=U_{n}\cup V_{n}$ with
  \begin{displaymath}
    U_{n}=\Big\{(x,i)\mid i\not = 1 \text{ or } x<\frac{2}{3\, n} \Big\} \and
    V_{n}=\Big\{(x,i)\mid i = 1 \text{ and } x>\frac{1}{3\, n} \Big\}.
  \end{displaymath}
  This shows that the notions of being strongly concave and of being
  locally strongly concave do not agree, and the notion of being
  strongly concave is not local.

Moreover, consider the piecewise affine function $f \colon X\to \R$ defined by
\begin{equation}
  \label{eq:22}
    f(x,i)=
        \begin{cases}
      -x&\text{ if } i=1,\\
      0&\text{ otherwise}.
    \end{cases}
  \end{equation}
  It is the infimum of the family $\{f_{n}\}_{n\ge 2}$, but it is not
  locally strongly concave. Indeed, for any $\varepsilon>0$ we have
  that
  $\iota (\varepsilon,1)= \frac{1}{2}\, \iota (\varepsilon,3)+
  \frac{1}{2}\, \iota(\varepsilon,4)$ and
  \begin{displaymath}
    f(\varepsilon,1)= -\varepsilon < 0 = \frac{1}{2}\,
    f(\varepsilon,3)+ \frac{1}{2} \, f(\varepsilon,4),
  \end{displaymath}
  and so the restriction of $f$ to any open neighborhood of the point
  $(0,1)$ is not the pullback of a concave function on $H$.  Thus the
  notion of being locally strongly concave is not closed under the
  operation of taking the infimum.
\end{exmpl}

In Definition \ref{def:6}, the concave function $f'$ on $H_{X}$ might
in principle take the values $\pm\infty$, even though the piecewise
affine function $f$ on $U$ takes only values in $\R$.  The next
example shows that including these infinite values does make a
difference in the notion of strong concavity, with respect to a
definition where these values for $f'$ are not allowed.
  
\begin{exmpl}
  \label{exm:11}
  Let $X$ be the $1$-dimensional polyhedral space consisting of the
  union of the two real lines $\R\times \{0\}$ and $ \R\times \{1\}$
  of $H=\R^{2}$.  Consider the piecewise affine function
  $f\colon X\to \R\cup\{\pm\infty\}$ defined, for $x\in \R$, by
  $f(x,0)= x$ and $f(x,1)=-x$, and let $f'\colon \R^{2}\to \R$ be the
  function defined, for $(x,y)\in \R^{2}$, by
  \begin{displaymath}
    f'(x,y)=
    \begin{cases}
      +\infty&\text{ if } 0<y<1,\\
      x&\text{ if } y=0,\\
      -x & \text{ if } y=1,\\
      -\infty&\text{ if } y<0 \text{ or } y>1.      
    \end{cases}
  \end{displaymath}
  It is concave according to Definition \ref{def:7} and since
  $f=f'\circ \iota$, we deduce that $f$ is strongly concave in the
  sense of Definition \ref{def:6}.  Moreover, any concave function
  $f''\colon \R^{2}\to \R \cup\{\pm\infty \}$ with $f=f''\circ \iota$
  should satisfy that
  \begin{displaymath}
    f''\Big(0,\frac{1}{2} \Big) \ge \frac{1}{2} \, f(x,0)+
    \frac{1}{2}\, f(-x,1)=x
  \end{displaymath}
  for all $x\in \R$. Hence, such a concave function necessarily takes
  the value $ +\infty$ at the point $\big(0,\frac{1}{2}\big)$.
\end{exmpl}

In view of the issues concerning these notions of concavity, we
propose two other ones based on the idea of preserving the
positivity of Minkowski cycles. For the rest of this section, we
  assume that $X$ is a Euclidean polyhedral space and that $U$ is an
  open subset of it.

\begin{defn}\label{def:3}
  Let $f$ be a piecewise affine function on $U$.
  \begin{enumerate}
  \item The piecewise affine function $f$ is \emph{concave} if for
    every positive Minkowski cycle $\gamma$ on an open subset $V$ of
    $U$, the product $f\cdot \gamma$ is also a positive Minkowski
    cycle on $V$.
  \item When $U$ is balanced with balancing condition $\beta_{U}$, then
    $f$ is \emph{weakly concave} if the product $f\cdot \beta_{U}$ is a
    positive Minkowski cycle on $U$.
  \end{enumerate}  
\end{defn}

The sets of strongly concave, of concave, and of weakly concave piecewise
affine functions on $U$ are respectively denoted by
\begin{displaymath}
  \SCPA(U), \quad \CPA(U) , \quad\WCPA(U).
\end{displaymath}
These are cones in $\PA(U)$.  Similarly, for a polyhedral complex
$\Pi$ on $X$, the cones of strongly concave, of concave, and of weakly
concave piecewise affine functions on~$U$ that are defined on $\Pi$
are respectively denoted by
\begin{displaymath}
  \SCPA_{\Pi}(U), \quad \CPA_{\Pi}(U) , \quad \WCPA_{\Pi}(U).
\end{displaymath}
When we want to stress the dependency of the class of weakly
  concave piecewise affine functions on the balancing condition, we
  denote its corresponding spaces by $\WCPA(U,\beta_{U} )$ and
  $\WCPA_{\Pi }(U,\beta_{U} )$.

Clearly, the notion of strongly concave piecewise affine function does
not depend on the choice of the Euclidean structure of $X$ .  The next
result shows the dependency of the other notions of concavity
with respect to changes on the Euclidean structure.

\begin{prop}\label{prop:21}
      Let $X'$ be a Euclidean polyhedral space with the same
      quasi-embedded polyhedral space of $X$, let $U'$ be the open
      subset of $X'$ corresponding to $U$ and
      $\varphi_{X',X}\colon Z_{k}(U)\to Z_{k}(U')$, $k\in \Z_{\ge0}$,
      the change of Euclidean structure maps as in \eqref{eq:55}. Then
      \begin{displaymath}
    \CPA(U)=\CPA(U')\and \WCPA(U,\beta_{U} )=\WCPA(U',\varphi_{X',X}(\beta_{U}) ).
  \end{displaymath}
\end{prop}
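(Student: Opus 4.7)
The plan is to reduce both equalities to Proposition \ref{prop:5}, together with the observation that the map $\varphi_{X',X}$ is a bijection between positive Minkowski cycles on $U$ and positive Minkowski cycles on $U'$. Indeed, on each polyhedron $\sigma$ the scaling factor $\vol_{X'}(\sigma)/\vol_{X}(\sigma)$ is strictly positive, so $\varphi_{X',X}$ preserves positivity coefficient-wise. Moreover, by Remark \ref{rem:6} applied to $X''=X$, the composition $\varphi_{X,X'}\circ \varphi_{X',X}$ is the identity, hence $\varphi_{X',X}$ is an isomorphism between $M_{k}^{+}(\Pi|_{U})$ and $M_{k}^{+}(\Pi'|_{U})$, and likewise at the level of the direct limits $Z_{k}^{+}(U)$ and $Z_{k}^{+}(U')$.

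For the first equality I would argue both inclusions symmetrically. Suppose $f\in \CPA(U)$ and let $\gamma'$ be a positive Minkowski cycle on some open subset $V'\subset U'$; denote by $V\subset U$ the corresponding open subset. Set $\gamma=\varphi_{X,X'}(\gamma')\in Z_{k}^{+}(V)$. Since $f$ is concave on $U$, the cycle $f\cdot \gamma$ is a positive Minkowski cycle on $V$. Applying $\varphi_{X',X}$ and using the commutation formula from Proposition \ref{prop:5} gives
\begin{displaymath}
  f\cdot \gamma'=f\cdot \varphi_{X',X}(\gamma)=\varphi_{X',X}(f\cdot \gamma),
\end{displaymath}
which is positive because $\varphi_{X',X}$ preserves positivity. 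Hence $f\in \CPA(U')$. The reverse inclusion follows by exchanging the roles of $X$ and $X'$ and using $\varphi_{X,X'}$.

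For the second equality, the same commutation formula applied to the balancing condition yields
\begin{displaymath}
  f\cdot \varphi_{X',X}(\beta_{U})=\varphi_{X',X}(f\cdot \beta_{U}).
\end{displaymath}
Since $\varphi_{X',X}$ and its inverse $\varphi_{X,X'}$ both preserve positivity, the cycle on the left is positive if and only if the cycle on the right is positive. This is exactly the statement that $f\in \WCPA(U,\beta_{U})$ if and only if $f\in \WCPA(U',\varphi_{X',X}(\beta_{U}))$, concluding the proof.

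I do not anticipate any substantive obstacle: the only point to be careful about is ensuring that the change of structure and product operations pass from weights on a fixed polyhedral complex to cycles in the direct limit, but this compatibility has already been established in the paragraph containing \eqref{eq:55}. The entire argument is really a bookkeeping consequence of Proposition \ref{prop:5} and the positivity of the volume ratios.
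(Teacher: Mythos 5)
Your proof is correct and takes the same route as the paper, which simply cites Proposition \ref{prop:5} and leaves the bookkeeping implicit; you have filled in exactly the intended steps (positivity of the volume ratios, invertibility from Remark \ref{rem:6}, and the commutation formula), so the argument matches the intended ``direct consequence.''
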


\begin{proof}
  This is a direct consequence of Proposition \ref{prop:5}.
\end{proof}

We have seen in Example \ref{exm:8} that, for a piecewise affine
function on an open subset of a polyhedral space, the notion of being strongly
concave is not local. By contrast, the notions of being
concave and of being weakly concave are local.

\begin{prop} \label{prop:14} Let $f$ be a piecewise affine function on
  $U$, and $U=\bigcup_{i}U_{i}$ an open covering.
  \begin{enumerate}
  \item $f$ is
  concave if and only if $f|_{U_{i}}$ is
  concave for all $i$.
\item When $U$ is balanced, $f$ is weakly concave if and only if
  $f|_{U_{i}}$ is weakly concave for all $i$.
  \end{enumerate}  
\end{prop}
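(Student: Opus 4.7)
The plan is to reduce both parts to Proposition~\ref{prop:12}, which says the product of a piecewise affine function with a weight commutes with restriction to open subsets, combined with a locality principle for positivity of Minkowski cycles that I will establish first. The main obstacle is this locality principle, as it is where one must patch together cycle representatives defined on \emph{a priori} unrelated subdivisions.

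I state the locality principle as follows: if $\delta$ is a Minkowski cycle on an open subset $V\subset X$ and $V=\bigcup_i V_i$ is an open cover such that $\delta|_{V_i}$ is positive for every $i$, then $\delta$ is positive. To prove it, take an arbitrary representative $d\in M_k(\Pi|_V)$ of $\delta$ and show that $d$ is already a positive weight. Pick $\tau\in\Pi|_V(k)$ and choose $i$ with $\tau\cap V_i\ne\emptyset$. Since $\delta|_{V_i}$ is positive and the set of polyhedral complexes on $X$ is directed under subdivision by Proposition~\ref{prop:2}, there exists a subdivision $\Pi_i\ge\Pi$ for which $(d_{\Pi_i})|_{V_i}=(d|_{V_i})_{\Pi_i}$ is a positive Minkowski weight on $\Pi_i|_{V_i}$. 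The relatively open subset $\tau\cap V_i$ of $\tau$ cannot lie in the $(k-1)$-skeleton of the subdivision of $\tau$ induced by $\Pi_i$, so some $\tau'\in\Pi_i(k)$ satisfies $\tau'\subset\tau$ and $\tau'\cap V_i\ne\emptyset$. The pullback formula of Definition~\ref{def:21} then gives $d(\tau)=(d_{\Pi_i})(\tau')\ge 0$.

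With the principle in hand, the ``only if'' directions follow immediately from Proposition~\ref{prop:12}: the concavity (respectively weak concavity) of $f$ on $U$ transfers to $f|_{U_i}$ by restricting positive cycles defined on open subsets of $U_i$ (viewed also as open subsets of $U$), while $\beta_U|_{U_i}$ provides the balancing on $U_i$ and is positive there. For the ``if'' direction of (1), given a positive Minkowski cycle $\gamma$ on an open $V\subset U$, apply the locality principle to $\delta=f\cdot\gamma$ on the cover $V=\bigcup_i(V\cap U_i)$: each restriction $\delta|_{V\cap U_i}=f|_{U_i}\cdot\gamma|_{V\cap U_i}$ is positive by the concavity of $f|_{U_i}$ applied to the positive cycle $\gamma|_{V\cap U_i}$. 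The ``if'' direction of (2) is the analogous argument with $V=U$ and $\gamma=\beta_U$.
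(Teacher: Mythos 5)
Your proof is correct and follows the route the paper indicates; the paper's proof is a one-sentence appeal to the definitions and Proposition~\ref{prop:12}, while you make explicit the key step that makes this work, namely the locality principle for positivity of Minkowski cycles. That principle (equivalently: every representative of a positive Minkowski cycle is itself a positive weight, which you prove via the skeleton argument that a nonempty relatively open subset of a $k$-dimensional polyhedron cannot lie in the $(k-1)$-skeleton of a locally finite subdivision) is the genuine content, and it is worth noting that it cannot be bypassed by invoking Proposition~\ref{prop:13}, since that result appears only after Proposition~\ref{prop:14} in the paper.
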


\begin{proof}
  This follows directly from the definitions and the compatibility
  between the restriction to open subsets and the product of piecewise
  affine functions with Minkowski weights (Proposition \ref{prop:12}).
\end{proof}

The next result motivates our terminology for the different notions
of concavity.

\begin{prop}
  \label{prop:11}
  Let $f$ be a piecewise affine function on  $U$.
  \begin{enumerate}
  \item \label{item:8} If  $f$ is locally strongly
    concave, then $f$ is concave.
  \item \label{item:9} If $U$ is balanced and $f$ is concave, then $f$
    is weakly concave.
  \end{enumerate}
\end{prop}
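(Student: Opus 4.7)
The plan is to handle parts (1) and (2) separately. Part (2) is essentially immediate, while part (1) reduces via the locality of concavity (Proposition \ref{prop:14}) to the case where $f$ is globally strongly concave, for which the key tool is a perturbation argument that converts the balancing of the Minkowski weight into a genuine convex combination in the ambient hyperplane $H_X$.

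For part (2), the balancing condition $\beta_U$ is by definition a positive Minkowski cycle on $U$. If $f$ is concave, then $f\cdot\beta_U$ is a positive Minkowski cycle on $U$, which is precisely the definition of weak concavity.

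For part (1), by Proposition \ref{prop:14} it suffices to show that strongly concave implies concave: combined with the hypothesis of local strong concavity this yields local concavity, hence concavity. So assume $f=(f'\circ\iota_X)|_U$ for a concave function $f'\colon H_X\to\R\cup\{\pm\infty\}$, and let $\gamma=[c]$ be a positive Minkowski cycle on an open subset $V\subset U$, represented by a Minkowski weight $c\in M_k^+(\Pi|_V)$ for some polyhedral complex $\Pi$ on which $f$ is also defined. I must verify that $(f\cdot c)(\tau)\ge 0$ for every $\tau\in\Pi|_V(k-1)$. Set $S=\sum_{\sigma\succ\tau}c(\sigma)\ge 0$; if $S=0$ then every relevant $c(\sigma)$ vanishes and the claim is trivial. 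Otherwise I pick a point $x$ in the relative interior of $\tau$ that also lies in $V$, and for $\epsilon>0$ sufficiently small I let $y_\sigma\in\sigma\cap V$ denote the unique point with $\iota_X(y_\sigma)=\iota_X(x)+\epsilon\, v_{\sigma\setminus\tau}$ (using the injectivity of $\iota_X$ on $\sigma$ together with the fact that $v_{\sigma\setminus\tau}$ points from $\tau$ into $\sigma$). The defining relation $\sum_{\sigma\succ\tau}c(\sigma)\, v_{\sigma\setminus\tau}=0$ for Minkowski weights gives the convex combination
\[
\iota_X(x)=\frac{1}{S}\sum_{\sigma\succ\tau}c(\sigma)\,\iota_X(y_\sigma)
\]
in $H_X$. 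Concavity of $f'$ — applied at finitely many points where $f'$ takes finite values, since all $y_\sigma$ and $x$ lie in $U$ — yields $S\,f(x)\ge\sum_{\sigma\succ\tau}c(\sigma)\,f(y_\sigma)$. Expanding $f(y_\sigma)=f_\sigma(\iota_X(y_\sigma))=f(x)+\epsilon\, f_\sigma(v_{\sigma\setminus\tau})$ by linearity of $f_\sigma$, this inequality simplifies to $0\ge -\epsilon\,(f\cdot c)(\tau)$, which is the desired positivity.

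The main obstacle is geometric bookkeeping for the perturbation: one must ensure $x$ lies in the relative interior of $\tau$ and inside $V$, and that $\epsilon$ is chosen small enough so that each $y_\sigma$ actually belongs to $\sigma\cap V$. Once this is arranged, the argument becomes a clean Jensen-type inequality, obtained by coupling the balancing relation for $c$ with the convexity of the hypograph of the concave extension $f'$.
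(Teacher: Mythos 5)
Your proposal is correct and follows essentially the same strategy as the paper: part (2) is immediate from positivity of $\beta_U$, and part (1) reduces to a Jensen-type inequality obtained by pairing the balancing relation $\sum_{\sigma\succ\tau} c(\sigma)\,v_{\sigma\setminus\tau}=0$ with the concavity of the ambient extension $f'$, together with the computational identity in \eqref{eq:2}. The one organizational difference is that you route through Proposition~\ref{prop:14} (locality of concavity) to reduce to the globally strongly concave case, whereas the paper works directly with the locally strongly concave hypothesis by shrinking to a neighborhood $V'$ of the chosen point $x\in\relint(\tau)\cap V$ where $f$ agrees with $f'\circ\iota_X$; this is a cosmetic reorganization that leads to the same estimate and both handle the choice of $\varepsilon$ in the same way.
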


\begin{proof}
  To prove \eqref{item:8}, let $\Pi$ be a polyhedral complex on $X$
  where $f$ is defined, $V\subset U $ an open subset,
  $k\in \Z_{\ge 0}$, and $c \in M_{k}^{+}(\Pi|_V)$. Let
  $\tau \in \Pi|_V(k-1)$ and set
  \begin{equation}\label{eq:4}
    S= \hspace{2mm}\sum_{\mathclap{\substack{\sigma  \in \Pi|_V(k)\\ \sigma \succ \tau    }}}c(\sigma   ).
  \end{equation}
If $S=0$ then $c(\sigma )=0$ for all $\sigma \in \Pi|_V(k)$ with
$\sigma\succ \tau$
  because $c$ is positive. Therefore $(f\cdot c)(\tau )=0 $ in this
  case.
  
  Else $S> 0$, and pick then $x\in \relint(\tau )\cap V$. Since $f$ is
  locally strongly concave, we can choose an open neighborhood $V'$ in
  $ V$ of the point $x$, a convex subset $C$ of $N_{X}$ and a concave
  function $f'\colon C \to \R$ such that
  \begin{displaymath}
f|_{V'}=f'\circ \iota_{X} .    
  \end{displaymath}
  Choose also $\varepsilon >0$ so that
  $\iota_{X}(x)+\varepsilon \, v_{\sigma \setminus \tau }\in
  \iota_{X}(\sigma \cap V')$ for all $\sigma \in \Pi|_V(k)$
  with~$\sigma\succ \tau$.  By \eqref{eq:4} we have that
  $ \sum_{\sigma \succ \tau } \frac{c(\sigma )}{S}=1$ and, since $c$
  is a Minkowski weight, we also have that
  $ \sum_{\sigma \succ \tau } c(\sigma )\,v_{\sigma \setminus \tau
  }=0$.  Therefore, the concavity of $f'$ implies that
  \begin{equation}
    \label{eq:50}
    f'(\iota_{X}(x))\ge
    \sum_{\sigma \succ \tau  }
    \frac{c(\sigma  )}{S} f'(\iota_{X}(x)+\varepsilon \, v_{\sigma \setminus \tau  }).
  \end{equation}
  From the formula in \eqref{eq:2} and the fact that $f$ is affine on
  each polyhedron $\sigma \in \Pi|_V$ we get that
  \begin{equation*}
    (f\cdot c)(\tau  )=\frac{S}{\varepsilon } 
    \bigg( f'(\iota_{X}(x))-
    \sum_{\sigma \succ \tau  }
    \frac{c(\sigma  )}{S} f'(\iota_{X}(x)+\varepsilon \, v_{\sigma \setminus \tau  })\bigg),
\end{equation*}
and so \eqref{eq:50} implies that $(f\cdot c)(\tau )\ge 0$. Hence
$f\cdot c$ is positive, and thus $f$ is concave.

The statement in \eqref{item:9} is immediate from the definitions,
because the balancing condition $\beta_{U}$ is a positive Minkowski cycle.
\end{proof}

The next result shows that, for a given piecewise affine function, the
condition of being concave can be checked in any polyhedral complex
where it is defined.
  
\begin{prop}
  \label{prop:13} Let $f$ be a piecewise affine function on $U$ defined on a
  polyhedral complex $\Pi$. The following conditions are equivalent:
  \begin{enumerate}
  \item \label{item:3}  $f$ is concave,
  \item\label{item:4} for every open subset $V\subset U$, every $k\in \Z_{\ge 0}$
  and every $c\in M_{k}^{+}(\Pi |_{V})$, we have that
  $f\cdot c \in M_{k-1}^{+}(\Pi |_{V})$.
\end{enumerate}
\end{prop}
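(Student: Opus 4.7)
My plan is to establish both implications, with (1) $\Rightarrow$ (2) being direct and (2) $\Rightarrow$ (1) reducing to an extension of condition (2) to arbitrary subdivisions of $\Pi$.

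For (1) $\Rightarrow$ (2), I would take $c \in M_k^+(\Pi|_V)$; then $[c] \in Z_k^+(V)$ and by concavity $[f \cdot c] = f \cdot [c] \in Z_{k-1}^+(V)$, so there exist a polyhedral complex $\Pi'$ and $c' \in M_{k-1}^+(\Pi'|_V)$ with $[c'] = [f \cdot c]$. On a common subdivision $\Pi'' \ge \Pi, \Pi'$, Proposition~\ref{prop:12} gives $(f \cdot c)_{\Pi''} = (c')_{\Pi''}$, which is positive. For any $\tau \in \Pi|_V(k-1)$, one can choose $\tau'' \in \Pi''|_V(k-1)$ contained in $\tau$ of the same dimension; by the definition of pullback, $(f \cdot c)(\tau) = (f \cdot c)_{\Pi''}(\tau'') \ge 0$, so $f \cdot c \in M_{k-1}^+(\Pi|_V)$.

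For (2) $\Rightarrow$ (1), the plan is to first prove the following extension of (2): for every subdivision $\Pi_1 \ge \Pi$ and every $c \in M_k^+(\Pi_1|_V)$, the weight $f \cdot c$ lies in $M_{k-1}^+(\Pi_1|_V)$. Granting this, any positive cycle $\gamma = [c_0]$ with $c_0 \in M_k^+(\Pi_0|_V)$ can be represented on a common subdivision $\Pi_2 \ge \Pi, \Pi_0$ by $(c_0)_{\Pi_2} \in M_k^+(\Pi_2|_V)$, and the extension applied to $\Pi_2$ yields $f \cdot (c_0)_{\Pi_2} \in M_{k-1}^+(\Pi_2|_V)$, hence $f \cdot \gamma \in Z_{k-1}^+(V)$.

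For the extension, I would fix $\tau_1 \in \Pi_1|_V(k-1)$ and let $\tau \in \Pi|_V$ be the minimal polyhedron containing $\tau_1$. Grouping the $k$-dimensional $\sigma_1 \succ \tau_1$ of $\Pi_1$ according to the minimal $\sigma \in \Pi$ containing them (necessarily $\sigma \supseteq \tau$), using that $f_{\sigma_1}$ can be taken equal to $f_\sigma$ on $\iota_{X}(\sigma_1)$ since $f$ is affine on $\sigma$, and subtracting the vanishing expression $f_\tau\bigl(\sum_{\sigma_1\succ\tau_1} c(\sigma_1)\,v_{\sigma_1 \setminus \tau_1}\bigr)$ provided by the Minkowski balancing of $c$ at $\tau_1$, one rewrites
\[
(f \cdot c)(\tau_1) = -\sum_{\sigma \supsetneq \tau} (f_\sigma - f_\tau)\Bigl(\sum_{\substack{\sigma_1 \subseteq \sigma\\ \sigma \text{ minimal}}} c(\sigma_1)\, v_{\sigma_1 \setminus \tau_1}\Bigr),
\]
the summand for $\sigma = \tau$ being zero. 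The main obstacle is to reinterpret this expression, on a small open neighborhood $V' \subseteq V$ of a point in $\relint \tau_1$, as the value $(f \cdot c')(\tau)$ of a positive Minkowski weight $c' \in M_k^+(\Pi|_{V'})$ built by aggregating the local data of $c$ around $\tau$: summing $c(\sigma_1)$ over the pieces inside each $k$-dimensional $\sigma \in \Pi$ with $\sigma \succ \tau$ handles the easy contributions, while a more delicate argument using the affineness of $f$ on each higher-dimensional $\sigma \in \Pi$ through $\tau$ is needed to absorb the remaining terms and verify that $c'$ is indeed Minkowski on $\Pi|_{V'}$. Once such $c'$ is produced, condition~(2) applied to it gives $(f \cdot c)(\tau_1) \ge 0$, completing the extension.
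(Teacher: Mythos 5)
Your proposal follows the same overall route as the paper: (1) $\Rightarrow$ (2) from the definitions, and (2) $\Rightarrow$ (1) by extending (2) to arbitrary subdivisions $\Pi_1 \ge \Pi$ via an auxiliary Minkowski weight on the coarse complex. The first direction is essentially correct, although the equality $(f\cdot c)_{\Pi''} = (c')_{\Pi''}$ does not come from Proposition~\ref{prop:12} (which concerns compatibility of the operations with pullback) but rather from the direct-limit definition of $Z_{k-1}(V)$ together with the injectivity of the pullback map.

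The second direction, however, leaves a genuine gap at precisely the point you flag as ``the main obstacle,'' and moreover the plan as stated has a dimension mismatch. Writing $l = \dim\tau$, you seek $c' \in M_k^+(\Pi|_{V'})$ with $(f\cdot c)(\tau_1) = (f\cdot c')(\tau)$. But $f\cdot c'$ is then a $(k-1)$-dimensional weight, so if $l > k-1$ (which happens whenever $\tau_1$ meets the relative interior of a polyhedron of $\Pi$ of dimension $\ge k$) the right-hand side is identically zero by convention, whereas the left-hand side need not vanish. The auxiliary weight must instead have dimension $l+1$, and hypothesis (2) must then be invoked with $l+1$ in place of $k$. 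Beyond this, the construction that you postpone is the real content of the proof. The paper proceeds by splitting each $v_{\sigma_1\setminus\tau_1}$ orthogonally into a component parallel to $\iota_X(H_\tau)$ and a component orthogonal to it; expressing the orthogonal component as a \emph{nonnegative} combination of the normals $v_{\nu\setminus\tau}$ over the $(l+1)$-dimensional $\nu\in\Pi$ with $\tau\prec\nu\prec\lambda(\sigma_1)$, where $\lambda(\sigma_1)$ is the minimal polyhedron of $\Pi$ containing $\sigma_1$; and then defining $c'(\nu)$ by aggregating these nonnegative coefficients against $c(\sigma_1)$. The nonnegativity of the coefficients is what gives positivity of $c'$, the orthogonal split of the balancing relation of $c$ at $\tau_1$ is what gives the balancing of $c'$ at $\tau$, and the same decomposition, applied inside the linear functions $f_\sigma - f_\tau$, is what produces the equality $(f\cdot c)(\tau_1) = (f\cdot c')(\tau)$. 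Your rewriting of $(f\cdot c)(\tau_1)$ is a correct first step, but without this decomposition the ``remaining terms'' are not absorbed, and the proof is incomplete.
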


\begin{proof}
  Suppose that $f$ is concave and let $c\in M_{k}^{+}(\Pi
  |_{V})$. Since $ [f\cdot c]=f\cdot [c]$ is a positive Minkowski
  cycle, we deduce that $f\cdot c$ is a positive Minkowski weight, and
  so $f$ satisfies the condition \eqref{item:4}.

  To prove the converse, suppose that $f$ satisfies the condition
  \eqref{item:4} and choose an open subset $V\subset U$,
  $k\in \Z_{\ge 0}$ and $\gamma\in Z_{k}^{+}(V)$.  Let $\Pi '$ be a
  subdivision of $\Pi $ and $c'\in M_{k}^{+}(\Pi '|_V)$ a positive
  Minkowski weight representing $\gamma$.

  Let $\tau \in \Pi '|_{V}(k-1)$ and set $\mu$ for the minimal
  polyhedron of $\Pi|_{V} $ containing~$\tau $.  For each
  $\sigma \in \Pi '|_{V}(k)$ with $\sigma \succ \tau $, let
  $\lambda (\sigma )$ be the minimal polyhedron of $\Pi|_{V} $
  containing $\sigma $.  We decompose the unit normal vector to $\tau$
  in the direction of $\sigma$ as
  \begin{equation}
    \label{eq:41}
      v_{\sigma \setminus \tau }=v_{\sigma ,\mu }+v_{\sigma ,\mu ^{\perp}}
    \end{equation}
    with $v_{\sigma ,\mu }$ parallel to the affine subspace
    $\iota_{X}(H_{\mu} )$ and $v_{\sigma ,\mu ^{\perp}}$ orthogonal to
    it.  Then
    \begin{equation*}
      0=\sum_{\sigma \succ \tau }c'(\sigma ) \, v_{\sigma \setminus \tau }=
      \sum_{\sigma \succ \tau }c'(\sigma ) \, v_{\sigma ,\mu }+\sum_{\sigma \succ
        \tau }c'(\sigma ) \, v_{\sigma ,\mu ^{\perp}}. 
    \end{equation*}
    Since this decomposition is orthogonal, we deduce that
    \begin{equation}
      \label{eq:28}
       \sum_{\sigma \succ \tau }c'(\sigma ) \, v_{\sigma ,\mu
    }=\sum_{\sigma \succ \tau }c'(\sigma ) \, v_{\sigma ,\mu
      ^{\perp}}=0.
    \end{equation}
    Put $l=\dim(\mu )$, and for each $\sigma$ as above write
    \begin{equation}
      \label{eq:43}
      v_{\sigma ,\mu ^{\perp}}=\hspace{2mm}\sum_{\mathclap{\mu \prec \nu  \prec \lambda (\sigma )}} a_{\sigma, \nu 
    }\, v_{\nu  \setminus \mu }
  \end{equation}
  with $ a_{\sigma, \nu } \in \R_{\ge0}$ and where the sum goes over the
  polyhedra $\nu\in \Pi|_{V} (l+1)$ with
  $ \mu \prec \nu \prec \lambda (\sigma )$.  Such a decomposition
does  exist because $\mu \prec \lambda (\sigma )$, and the cone of
  vectors that are orthogonal to $\mu $ and point towards
  $\lambda (\sigma )$ contains $v_{\sigma ,\mu ^{\perp}}$ and is
  generated by the vectors $v_{\nu \setminus \mu }$ for
  $\nu \in \Pi|_{V} (l+1)$ with
  $\mu \prec \nu \prec \lambda (\sigma )$.

Let $W$ be an open subset of $V$ such that $\Pi '|_{W}$ consists only
of the polyhedra having $\tau $ as a face.  We then define a
positive Minkowski weight $c \in M_{l+1}^{+}(\Pi |_{W})$ by
setting, for each $\nu \in \Pi|_{W}(l+1)$,
  \begin{equation}
    \label{eq:51}
    c(\nu )= \hspace{2mm} \sum_{\mathclap{\substack{\sigma\succ\tau\\  \lambda (\sigma
        )\succ \nu}}} a_{\sigma, \nu }\, c'(\sigma ),
  \end{equation}
  the sum being over the polyhedra $\sigma \in \Pi '|_{V}(k)$ with
  $\sigma\succ\tau$ and $ \lambda (\sigma )\succ \nu$.  Since
  $\Pi |_{W}(l)=\{\mu\}$, to see that the formula in \eqref{eq:51}
  defines a positive Minkowski weight we only need to check the
  balancing condition at this polyhedron:
  \begin{multline*}
    \sum_{\nu \succ \mu }  c(\nu )\, v_{\nu \setminus \mu }
    =\sum_{\nu \succ \mu } \sum_{\substack{\sigma\succ\tau\\  \lambda (\sigma )\succ \nu}} 
    a_{\sigma ,\nu } \, c'(\sigma ) \, v_{\nu \setminus \mu }\\
    =\sum_{\sigma\succ\tau} c'(\sigma )  
\bigg(\hspace{4mm}    \sum_{\mathclap{\mu \prec \nu
    \prec \lambda (\sigma ) }}
    a_{\sigma, \nu }\, v_{\nu \setminus \mu }\bigg)
=\sum_{\sigma\succ\tau}    c'(\sigma )\, v_{\sigma ,\mu ^{\perp}}=0,
  \end{multline*}
  where the sums are taken over the polyhedra $\nu\in \Pi|_{W}(l+1)$
  and $\sigma \in\Pi'|_{W}(k)$ subject to the conditions stated
  therein.  Moreover, the product $(f\cdot c')(\tau )$ coincides with
  $(f\cdot c)(\mu )$:
  \begin{align*}
    (f\cdot c')(\tau )&= -\sum_{\sigma\succ\tau}  c'(\sigma )\,  f_{\sigma }(v_{\sigma \setminus \tau })\\
                     &= -\sum_{\sigma\succ\tau} c'(\sigma )\, f_{\lambda (\sigma ) }(v_{\sigma ,\mu
    }+v_{\sigma ,\mu ^{\perp}})\\ &=
    -\sum_{\sigma\succ\tau} c'(\sigma )\,  f_{\mu }(v_{\sigma ,\mu}) -
    \sum_{\sigma\succ\tau}
\hspace{4mm}    \sum_{\mathclap{\mu \prec \nu
    \prec \lambda  (\sigma ) }} c'(\sigma )\, a_{\sigma, \nu }\, 
    f_{\lambda (\sigma ) }(v_{\nu\setminus \mu })\\
    &=- f_{\mu }(0)- \sum_{\mu \prec \nu   } c(\nu )\,  f_{\nu}(v_{\nu \setminus \mu
    })\\
    &=(f\cdot c)(\mu ) ,
  \end{align*}
  where the indexes $\nu$ and $\sigma$ denote polyhedra in
  $\Pi|_{W}(l+1)$ and in $\Pi'|_{W}(k)$, respectively: the first
  equality is the definition of the product, the second follows from
  the decomposition in \eqref{eq:41} and the fact that
  $ f_{\sigma}$ and $ f_{\lambda(\sigma)}$ coincide on
  $\iota_{X}(N_{\sigma})$, the third from the decomposition in
  \eqref{eq:43} and the fact that $ f_{\lambda(\sigma)}$ and
  $ f_{\mu}$ coincide on $\iota_{X}(N_{\mu})$, the fourth from
  the first equation in \eqref{eq:28}, the definition of the Minkowski
  weight $c$ and the fact that $ f_{\lambda(\sigma)}$ and
  $ f_{\nu}$ coincide on $\iota_{X}(N_{\nu})$, and the fifth
  from the definition of the product.

  Since $c$ is positive, by hypothesis $(f\cdot c)(\mu ) \ge0$.  Hence
  $(f\cdot c')(\tau )\ge 0$ and since $\tau \in \Pi'|_{V}(k-1)$ is
  arbitrary, we deduce that $f\cdot c'$ is positive and so
  \begin{displaymath}
f\cdot \gamma = f\cdot [c'] = [f\cdot c']   
  \end{displaymath}
  is also positive. Varying $V$, $k$ and $\gamma$, we conclude that
  $f$ is concave.
\end{proof}

The next three examples show that the proposed classes of concave
piecewise affine functions are different.  

 \begin{exmpl}\label{exm:4} 
  Let $X$ be the $1$-dimensional polyhedral space made of the three
  polyhedra $\sigma _{i}=\R_{\ge 0}\times \{i\}$, $i=1,2,3$, glued
  together by the points $(0,i)$. Let $H=\R$ with its standard
  Euclidean structure and let $\iota \colon X \to H$ be the
  quasi-embedding defined, for $(x,i) \in X$, by
  \begin{displaymath}
    \iota(x,i)=
    \begin{cases}
      -x&\text{ if }i=1,\\
      x&\text{ if }i=2,3.
    \end{cases}
  \end{displaymath}
  Consider the polyhedral complex $\Pi$ on $X$ made of the three rays
  $\sigma_{i}$ and the origin $\tau=(0,1)$ and, for
  $\gamma_{i}\in \R_{>0}$, $i=2,3$, consider the balancing condition
  $b\in M_{1}(\Pi)$ given by
  \begin{displaymath}
    b(\sigma_{1})=\gamma_{2}+\gamma_{3}, \quad      b(\sigma_{2})=\gamma_{2} \and
    b(\sigma_{3})=\gamma_{3}.
  \end{displaymath}

  A piecewise affine function $f\colon X \to \R$ defined on $\Pi$
  is of the form $f(x,i)=a_{i}\, x$ with $a_{i}\in
  \R$. Then 
  \begin{enumerate}
  \item \label{item:7} $f$ is strongly
  concave if and only if  $-a_{1}\ge a_{2}=a_{3}$,
  \item \label{item:13} $f$  is concave if and only if 
  $-a_{1}\ge a_{2}$ and $-a_{1}\ge a_{3}$,
\item \label{item:16} $f$ is weakly concave if and only if
  $-(\gamma_{2}+\gamma_{3}) \, a_{1}\ge \gamma_{2}\, a_{2}+
  \gamma_{3}\, a_{3}$.
\end{enumerate}
Hence these
classes of concave piecewise affine functions are different, and the
notion of being weakly concave depends on the choice of the balancing
condition.
  \end{exmpl}

  The quasi-embedding in the previous example is not globally
  injective. In the next examples we will see that even when the
  quasi-embedding is a injective, the three notions of concavity can
  differ.

\begin{exmpl}\label{exm:7}
  Let $X $ be the $1$-dimensional balanced polyhedral space in
  Example~\ref{exm:8}. Let $\Pi$ the polyhedral complex on $X$ made of
  the four rays $\sigma_{i} = \R_{\ge 0}\times \{i\}$, $i=1,\dots, 4$, and their origin
  $\tau=(0,1)$, and $U$ an open neighborhood of this point.

  The cone of positive Minkowski weights on $\Pi|_U$ of dimension 1 is generated by
  the weights $c_{1}$ and $c_{2}$ given by
  \begin{gather*}
    c_{1}(\sigma _{1})=1, \quad c_{1}(\sigma _{2})=1, \quad
    c_{1}(\sigma _{3})=0, \quad c_{1}(\sigma _{4})=0,\\
        c_{2}(\sigma _{1})=0,\quad c_{2}(\sigma _{2})=\sqrt{2},\quad
    c_{1}(\sigma _{3})=1, \quad c_{1}(\sigma _{4})=1.
  \end{gather*}
The piecewise affine function $f\colon X\to \R$ given by the formula \eqref{eq:22} is
defined on $\Pi$ and we have that 
  \begin{displaymath}
    (f\cdot c_{1})(\tau)=1 \and (f\cdot c_{2})(\tau)=0.
  \end{displaymath}
  By Proposition \ref{prop:13} it is concave although, as it was
  explained in Example \ref{exm:8}, it is not strongly concave.
\end{exmpl}

In the next example we construct a weakly concave piecewise affine
function that multiplied twice with the balancing condition gives a
negative weight. In particular, such a
piecewise affine function is not concave.

\begin{exmpl}
  \label{exm:9}
Let  $e_{i}$, $i=1,\dots,5$, be five linearly independent vectors in a
vector space $M$ and consider the polyhedral space $X$ in $M$ made of
the nine $2$-dimensional cones $\sigma_{i}$, $i=1,\dots, 9$,
respectively generated by the pairs
$\{ e_{1},e_{3}\}$,
  $\{ e_{1},e_{4}\}$, $\{ e_{1},e_{5}\}$,
  $\{ e_{2},e_{3}\}$, $\{ e_{2},e_{4}\}$,
  $\{ e_{2},e_{5}\}$, $\{ e_{3},e_{4}\}$,
  $\{ e_{3},e_{5}\}$ and $\{ e_{4},e_{5}\}$.
Let
  $\iota \colon X \to H=\R^{3}$ be the quasi-embedding given by
  \begin{alignat*}{2}
    \iota (e_{1})&=(0,0,1),& \quad \iota (e_{2})&=(0,0,-1),\\
    \iota (e_{3})&=(1,0,0),& \quad  \iota (e_{4})&=(0,1,0), \quad 
    \iota (e_{5})=(-1,-1,0).
  \end{alignat*}  
  Let $\Pi$ be the polyhedral complex on $X$ made of the origin of
  $M$, the five rays $\tau _{i} =\R_{\ge 0}\, e_{i}$, $i=1,\dots,5$,
  and the nine cones $\sigma_{i}$, $i=1,\dots, 9$. Let $b\in M_{2}(\Pi)$ be
  the balancing condition on $\Pi$ given by
  \begin{displaymath}
   b(\sigma_{i})=
   \begin{cases}
     \sqrt{2} &\text{ if } i=3, 6 ,\\
     1 &\text{ otherwise}.
   \end{cases}
  \end{displaymath}
  Hence $X$ is  balanced and its quasi-embedding is
  globally injective.

  Let $f\colon X \to \R$ be the 
  piecewise affine function defined on $\Pi$  given by  the values
  \begin{displaymath}
    f(e_{1})=1,\quad f(e_{2})=0,\quad f(e_{3})=-1,\quad f(e_{4})=0,\quad f(e_{5})=0.  
  \end{displaymath}
  We have that
  \begin{displaymath}
    (f\cdot b)(\tau _{1})=1,\ (f\cdot b)(\tau _{2})=1,\
    (f\cdot b)(\tau _{3})=0,\ (f\cdot b)(\tau _{4})=0,\
    (f\cdot b)(\tau _{5})=0.  
  \end{displaymath}
  Hence the $1$-dimensional Minkowski weight $f\cdot b$ is positive,
  and so $f$ is weakly concave. On the other hand
  $ (f\cdot (f\cdot b))(\{0\})=-1$, and therefore $f$ is not concave.
  \end{exmpl}

  We next show that weakly concave piecewise affine functions preserve
  the positivity of the Minkowski weights given by products of concave
  piecewise affine functions with the balancing condition.  By
  contrast, the product of several weakly concave piecewise affine
  functions with the balancing condition is not necessarily positive,
  as already shown in Example \ref{exm:9}.

  \begin{defn} \label{def:8}
    Suppose that the open subset $U \subset X$ is
    balanced with balancing condition $\beta _{U}$. Then a Minkowski
    cycle $\gamma$ on $U$ of dimension $k$ 
    is \emph{$\beta _{U}$-positive} if there are positive real numbers
    $\alpha_{j}$, 
    $j=1,\dots,s$ and concave piecewise affine functions
    $f_{1,j}, \dots, f_{n-k,j}$ on $U$, $j=1,\dots,
    s$,  such that
    \begin{displaymath}
      \gamma =\sum_{j=1}^{s}\alpha _{j} \,  f_{1,j} \cdots
      f_{n-k,j}  \cdot
      \beta_{U}.
    \end{displaymath}
    A Minkowski weight on $U$ is \emph{$\beta _{U}$-positive} if it
    represents a $\beta _{U}$-positive Minkowski cycle.
\end{defn}
 
\begin{prop}
  \label{prop:15}
  Suppose that $U \subset X$ is balanced, and let $f $ be a weakly
  concave piecewise affine function and $\gamma$ a
  $\beta _{U}$-positive Minkowski cycle on $U$. Then $f\cdot \gamma$
  is a $\beta _{U}$-positive Minkowski cycle on $U$.
  \end{prop}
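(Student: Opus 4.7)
The plan is to combine the commutativity of the intersection product (Proposition~\ref{prop:4}) with the defining property of weak concavity (Definition~\ref{def:3}(2)) and the preservation of positivity by concave piecewise affine functions (Proposition~\ref{prop:13}). The idea is to use commutativity to push the weakly concave factor $f$ past all the concave factors in a $\beta_U$-positive decomposition of $\gamma$, so that $f$ acts directly on $\beta_U$ and converts it into a positive cycle that the remaining concave factors may then act upon.

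Concretely, by Definition~\ref{def:8} fix a representation
\[
\gamma \;=\; \sum_{j=1}^{s}\alpha_{j}\, f_{1,j}\cdots f_{n-k,j}\cdot\beta_U,
\]
with $\alpha_{j}>0$ and each $f_{i,j}$ a concave piecewise affine function on $U$. Distributing $f$ and applying Proposition~\ref{prop:4} iteratively to each summand to commute $f$ past every $f_{i,j}$ yields
\[
f\cdot\gamma \;=\; \sum_{j=1}^{s}\alpha_{j}\, f_{1,j}\cdots f_{n-k,j}\cdot(f\cdot\beta_U).
\]
Weak concavity of $f$ (Definition~\ref{def:3}(2)) says precisely that $f\cdot\beta_U$ is a positive Minkowski cycle on $U$ of dimension $n-1$. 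Since each $f_{i,j}$ is concave, Proposition~\ref{prop:13} guarantees that multiplying a positive Minkowski cycle by any concave piecewise affine function produces another positive Minkowski cycle. Applying this iteratively, starting from the positive cycle $f\cdot\beta_U$, shows that each summand $f_{1,j}\cdots f_{n-k,j}\cdot(f\cdot\beta_U)$ is positive, and therefore so is the positive linear combination $f\cdot\gamma$.

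The main obstacle is promoting this positivity to a genuine $\beta_U$-positive decomposition in the sense of Definition~\ref{def:8}, i.e.\ a representation of the form $\sum_{l}\alpha'_{l}\, g_{1,l}\cdots g_{n-k+1,l}\cdot\beta_U$ with all $g_{i,l}$ concave. This amounts to absorbing $f$ into the list of concave factors acting on $\beta_U$, which is immediate when $f$ is itself concave. For $f$ only weakly concave one exploits the symmetry of Proposition~\ref{prop:4} once more, together with the positive Minkowski cycle $f\cdot\beta_U$ obtained from the hypothesis, effectively treating $f\cdot\beta_U$ as a substitute balancing cycle against which the concave $f_{i,j}$ act in the required form. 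This is the delicate point of the argument and is exactly where the distinction between concave and weakly concave functions must be carefully handled.
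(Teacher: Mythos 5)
Your core argument --- using Proposition~\ref{prop:4} to commute $f$ to act first on $\beta_U$ and then invoking the definitions of weak concavity (for $f\cdot\beta_U$) and concavity (for the remaining factors) --- is the same as the paper's; the paper merely streamlines by observing that, by linearity of the product, it suffices to treat a single term $\gamma = f_1\cdots f_{n-k}\cdot\beta_U$, avoiding the sum over $j$. Citing Proposition~\ref{prop:13} for the positivity step is a detour (the definition of a concave piecewise affine function in Definition~\ref{def:3}(1) already says it preserves positivity of Minkowski cycles), but it is not wrong. Up to this point you have established that $f\cdot\gamma$ is a \emph{positive} Minkowski cycle.

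Your final paragraph, however, correctly identifies a real gap and then does not close it. Knowing that $f\cdot\beta_U$ is positive does not yield a decomposition $f\cdot\beta_U = \sum_l \alpha'_l\, g_l\cdot\beta_U$ with $g_l\in\CPA(U)$ and $\alpha'_l>0$, which is exactly what Definition~\ref{def:8} requires for the next application of the concave factors to produce a $\beta_U$-positive cycle. The phrase about ``treating $f\cdot\beta_U$ as a substitute balancing cycle'' has no content as written: $\beta_U$-positivity is defined relative to the fixed balancing condition $\beta_U$, not relative to an arbitrary positive cycle, so nothing is gained. Writing $f = f_1 - f_2$ with $f_1,f_2\in\CPA(U)$ via Theorem~\ref{thm:1} gives a difference of $\beta_U$-positive cycles, not a positive combination, and so also does not help directly. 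Note that the paper's own proof concludes with ``this product is positive, as stated'' --- i.e.\ it too establishes only positivity of $f\cdot\gamma$, not $\beta_U$-positivity --- so you have put your finger on an imprecision in the source, but the gesture in your last paragraph is not an argument; it should either be made rigorous (by proving that $f\cdot\beta_U$ is itself $\beta_U$-positive, which weak concavity of $f$ does not obviously give) or cut, with the stated conclusion weakened to positivity.
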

  
  \begin{proof}
    It is enough prove the statement for Minkowski cycles of the form
 \begin{displaymath}
   \gamma =   f_{1} \cdots f_{n-k} \cdot 
   \beta_{U} \in Z_{k}(U)
 \end{displaymath}
 with $0 \le k\le n$ and $f_{i}\in \CPA(U)$, $i=1,\dots, n-k$.  By
 Proposition \ref{prop:4}, we have that
 $ f\cdot \gamma = f_{1} \cdots f_{n-k} \cdot f\cdot \beta_{U} $ and
 the definitions of concave and weakly concave piecewise affine
 functions imply that this product is positive, as stated. 
\end{proof}

    \begin{prop}
      \label{prop:7} 
      For a piecewise affine function on a convex open subset $U$ of
      $\R^{n}$ with the structure of balanced polyhedral complex of
      Example \ref{exm:2},
    each of the conditions of being weakly concave, concave,
    locally strongly concave and strongly concave is equivalent to
    being concave in the usual sense.
  \end{prop}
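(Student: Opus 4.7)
The plan is to close the cycle of implications ``usual concave $\Rightarrow$ strongly concave $\Rightarrow$ locally strongly concave $\Rightarrow$ concave $\Rightarrow$ weakly concave $\Rightarrow$ usual concave''. The three central implications are either immediate from the definitions or direct consequences of Proposition \ref{prop:11}, once one observes that $U$ inherits a balancing condition by restriction from the balanced polyhedral space $\R^{n}$ of Example \ref{exm:2}. Only the two extreme implications require argument.

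For \emph{usual concave $\Rightarrow$ strongly concave}, I extend $f$ by $-\infty$ outside $U$ to a function $\wt f\colon\R^{n}\to\R\cup\{-\infty\}$. Convexity of $U$ together with concavity of $f$ makes $\hypo(\wt f)=\{(x,z):x\in U,\ z\le f(x)\}$ convex, so $\wt f$ is concave in the sense of Definition \ref{def:7}. Identifying the hyperplane $H=\{x_{n+1}=1\}$ with $\R^{n}$ through the quasi-embedding $\iota\colon x\mapsto(x,1)$, $\wt f$ corresponds to a concave function on $H$ pulling back to $f$ on $U$, giving strong concavity.

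The crux is \emph{weakly concave $\Rightarrow$ usual concave}. I fix a polyhedral complex $\Pi$ on $\R^{n}$ on which $f$ is defined; the balancing condition pulls back to the weight $b$ with $b(\sigma)=1$ for every $\sigma\in\Pi(n)$, and weak concavity forces $(f\cdot b)(\tau)\ge 0$ for every $\tau\in\Pi|_{U}(n-1)$. Since the polyhedra of $\Pi$ cover $\R^{n}$, any such codimension-one polyhedron meeting the open set $U$ is a facet of exactly two top-dimensional polyhedra $\sigma_{-},\sigma_{+}\in\Pi|_{U}$; writing $v'\in\R^{n}$ for the unit normal to $\tau$ pointing from $\sigma_{-}$ to $\sigma_{+}$, so that $v_{\sigma_{+}\setminus\tau}=-v_{\sigma_{-}\setminus\tau}=(v',0)$, a direct computation from \eqref{eq:1} gives
\begin{displaymath}
(f\cdot b)(\tau)=\bigl(\nabla f|_{\sigma_{-}}-\nabla f|_{\sigma_{+}}\bigr)\cdot v',
\end{displaymath}
where $\nabla f|_{\sigma}$ denotes the gradient of the affine function $f|_{\sigma}$. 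Continuity of $f$ across $\tau$ forces the difference $\nabla f|_{\sigma_{-}}-\nabla f|_{\sigma_{+}}$ to be parallel to $v'$, so weak concavity at $\tau$ amounts to $\nabla f|_{\sigma_{-}}-\nabla f|_{\sigma_{+}}=\lambda_{\tau}\,v'$ with $\lambda_{\tau}\ge 0$.

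To deduce usual concavity, I fix $y_{0},y_{1}\in U$ and study $g(t)=f((1-t)y_{0}+t y_{1})$ on $[0,1]$, a continuous piecewise affine function. For a generic pair $(y_{0},y_{1})$ the segment crosses the $(n-1)$-skeleton of $\Pi$ only in relative interiors of codimension-one faces, and transversally; at each such crossing through a facet $\tau$ the slope jump equals $\lambda_{\tau}\,\bigl(v'\cdot(y_{1}-y_{0})\bigr)\ge 0$, because $v'\cdot(y_{1}-y_{0})>0$ by the direction of traversal. Hence $g$ has nonincreasing right slope on $[0,1]$, so $g$ is concave. The inequality $f((1-t)y_{0}+ty_{1})\ge(1-t)f(y_{0})+tf(y_{1})$ for arbitrary $y_{0},y_{1}\in U$ then follows by approximating with generic pairs and invoking continuity of $f$ (Remark \ref{rem:1}). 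The main technical obstacle is precisely this perturbation and limiting step: one has to verify that the set of pairs $(y_{0},y_{1})\in U\times U$ whose segment is generic with respect to $\Pi$ is dense, which rests on the local finiteness of $\Pi$ and a dimension count against the lower-dimensional skeleta.
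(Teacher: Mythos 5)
Your proposal is correct and takes essentially the same route as the paper: both reduce to showing that weak concavity implies concavity in the usual sense, and both do this by checking the sign of $(f\cdot b)(\tau)$ across each codimension-one wall and then propagating concavity along segments that miss the $(n-2)$-skeleton, finishing by density and continuity. Your forward direction is a bit more explicit than the paper's (you spell out the $-\infty$ extension to produce a genuine concave function on $H_{X}$, whereas the paper just asserts local strong concavity and invokes Proposition~\ref{prop:11}), and your backward direction rephrases the paper's ``concave on convex subsets of $\sigma\cup\sigma'$'' as the equivalent slope-jump computation $\nabla f|_{\sigma_-}-\nabla f|_{\sigma_+}=\lambda_\tau v'$ with $\lambda_\tau\ge0$, but these are presentation differences, not a genuinely different method.
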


  \begin{proof}
    Let $f$ be a piecewise
    affine function on $U$. If $f$ is concave in the usual sense, then
    it is also locally strongly concave and, by
    Proposition~\ref{prop:11}, it also satisfies the other notions of
    concavity.

    For the converse, it is enough to show that the notion of being
    weakly concave implies that of being concave in the usual sense.
    Let $\Pi$ be a polyhedral complex on $\R^{n}$ with the induced
    balancing condition $b\in M_{n}^{+}(\Pi|_{U})$, and let
    $f\in \WCPA_{\Pi}(U)$.

    For each $\tau\in \Pi|_{U}(n-1)$ denote by
    $\sigma,\sigma'\in \Pi|_{U}(n)$ the two polyhedra containing
    $\tau$ as a facet. Since $f$ is weakly concave,
    \begin{equation}\label{eq:44}
      (f\cdot b)(\tau) = - f_{\sigma}(v_{\sigma\setminus \tau})
      - f_{\sigma'}(v_{\sigma'\setminus \tau}) \ge 0.
    \end{equation}
    The linear functions $ f_{\sigma}$ and $ f_{\sigma'}$
    coincide on $\iota_{X,\tau}(N_{\tau})$ and
    $v_{\sigma\setminus \tau} = -v_{\sigma'\setminus \tau} $, and so
    the inequality in \eqref{eq:44} implies that the restriction of
    $f$ to any convex subset of the union $\sigma\cup\sigma'$ is
    concave in the usual sense.

    Now let $x,x' \in U$ be two different points such that the segment
    $\overline{x x'}$ does not intersect the $(n-2)$-dimensional
    skeleton of $\Pi|_{U}$. Then every point
    $y \in \overline{x x'}$ has a convex neighborhood that is
    contained in a subset of the form $\sigma\cup\sigma'$ as
    above, and the  restriction of $f$ to such a  neighborhood is
    concave in the usual sense. Since this condition is local, we
    deduce that $f$ is concave on the segment $\overline{xx'}$.

    Hence, $f$ is concave on each segment $\overline{x x'}$ for the
    pairs $(x,x')$ in a dense open subset of $U\times U$.  Since $f$
    is continuous (Remark \ref{rem:1}), this property extends to every
    segment in $U$ and so $f$ is concave therein, as stated.
  \end{proof}

  \begin{cor}\label{cor:1} Let $X$ be a Euclidean  polyhedral
    space of pure dimension, $U$ a balanced open subset of $X$, and
    $f$ a weakly concave piecewise affine function on $U $. Let $\Pi $
    be a polyhedral complex on $X$ and $\sigma \in \Pi $ a polyhedron
    contained in $U$. Then the restriction $f|_{\sigma}$ is concave in
    the usual sense.
  \end{cor}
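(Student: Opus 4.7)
The plan is to reduce to Proposition \ref{prop:7} applied on a convex open neighborhood sitting inside the relative interior of an $n$-dimensional polyhedron of $\Pi$ adjacent to $\sigma$, and then pass to the limit. By continuity of $f$ (Remark \ref{rem:1}), it suffices to prove $f(tx_1+(1-t)x_2) \ge tf(x_1)+(1-t)f(x_2)$ for arbitrary $x_1, x_2 \in \relint(\sigma)$ and $t\in(0,1)$.

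Since $X$ has pure dimension $n$, I can choose an $n$-dimensional polyhedron $\tau\in\Pi$ with $\sigma\prec\tau$; then $\relint(\tau)$ is open in $X$. I will pick a vector $v\in N_X$ parallel to $\iota_{X,\tau}(H_\tau)$ lying in the interior of the cone of directions pointing from $\iota_X(\sigma)$ into $\iota_X(\tau)$, and for small $\varepsilon>0$ define the perturbations
\[
x_i^{(\varepsilon)} := \iota_{X,\tau}^{-1}\bigl(\iota_X(x_i)+\varepsilon v\bigr)\in\relint(\tau), \quad i=1,2.
\]
By choice of $v$, the entire perturbed segment $\overline{x_1^{(\varepsilon)}x_2^{(\varepsilon)}}$ lies in $\relint(\tau)$. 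Since the original segment $\overline{x_1x_2}\subset\sigma\subset U$ is compact and $U$ is open, for sufficiently small $\varepsilon$ the perturbed segment lies in $\relint(\tau)\cap U$, and an open convex neighborhood $V'\subset\relint(\tau)\cap U$ of it can be obtained, e.g.\ as a sufficiently narrow tube around the segment inside $\iota_X(H_\tau)$; such $V'$ is automatically open in $X$.

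By Proposition \ref{prop:14}, the restriction $f|_{V'}$ is weakly concave with respect to $\beta_U|_{V'}$. Taking a polyhedral complex $\Pi''$ refining $\Pi$ on which $f$ is defined, every top-dimensional cell of $\Pi''|_{V'}$ lies inside $\tau$, so $\beta_U|_{V'}$ assigns to each of them the constant positive value $b(\tau)$. Via $\iota_X$, $V'$ is identified with a convex open subset of $\iota_X(H_\tau)\cong\R^n$; by Proposition \ref{prop:21}, $f|_{V'}$ remains weakly concave after transporting the Euclidean structure of $V'$ to the standard one on $\R^n$, and the transformed balancing condition is still positive and constant on the top-dimensional cells (merely rescaled by a single factor coming from the affine identification). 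Proposition \ref{prop:7} then yields that $f|_{V'}$ is concave in the usual sense.

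Consequently $f(tx_1^{(\varepsilon)}+(1-t)x_2^{(\varepsilon)})\ge tf(x_1^{(\varepsilon)})+(1-t)f(x_2^{(\varepsilon)})$ for all small $\varepsilon>0$; letting $\varepsilon\to 0^+$, the perturbed points converge to $x_1, x_2$ and the convex combination to $tx_1+(1-t)x_2\in\sigma$, and the continuity of $f$ (Remark \ref{rem:1}) delivers the desired inequality on $\overline{x_1x_2}$. The main subtlety is bridging the dimensional gap between the $n$-dimensional balancing condition on $U$ and the potentially lower-dimensional concavity sought on $\sigma$; this is resolved by perturbing transversally into the full-dimensional cell $\tau$ and invoking the compatibility between restriction to open subsets, refinement, and change of Euclidean structure established in Sections \ref{sec:mink-weights-inters} and \ref{sec:conc-conv-funct}.
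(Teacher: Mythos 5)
Your proof is correct and rests on the same key lemma as the paper, namely Proposition \ref{prop:7}; you reduce to a full-dimensional cell where the balancing condition is a positive constant and apply that proposition. The difference is how the reduction is carried out. The paper's proof simply declares that one may assume ``without loss of generality that $\sigma$ is of maximal dimension,'' applies Proposition \ref{prop:7} to $\relint(\sigma)$ (which is then a convex open subset of $X$ on which the balancing is constant), and extends by continuity. Your route instead fixes two points $x_1, x_2 \in \relint(\sigma)$, pushes the segment into the relative interior of an adjacent top-dimensional cell $\tau$ by a small transversal perturbation, applies Proposition \ref{prop:7} on a thin convex tube $V'\subset\relint(\tau)\cap U$, and then lets the perturbation parameter tend to zero, invoking the continuity of piecewise affine functions (Remark \ref{rem:1}). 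This is more work when $\sigma$ is already top-dimensional, but it has the virtue of making the reduction explicit: the paper's ``WLOG'' implicitly assumes that a lower-dimensional $\sigma\subset U$ sits as a face of a top-dimensional $\tau\in\Pi$ with $\tau\subset U$, which need not hold as stated (the adjacent maximal cells may extend beyond $U$); one really does need the segment-by-segment compactness and limit argument, which is exactly what you supply. One minor imprecision: the appeal to Proposition \ref{prop:21} is not quite the right tool for identifying $V'$ with a convex open subset of $\R^n$ carrying the structure of Example \ref{exm:2}, since that proposition concerns changing the inner product on a fixed $N_X$ rather than transporting along an affine isomorphism $\iota_X(H_\tau)\cong\R^n$; but the point you need --- that a constant positive rescaling of the balancing weight on the top cells does not affect the notion of weak concavity, and that Proposition \ref{prop:7}'s proof only uses constancy of the balancing, not the specific value $1$ --- is sound, so the step goes through.
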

  
\begin{proof}
  We can assume without loss of generality that $\sigma $ is of
  maximal dimension. The balancing condition on $U$ induces a
  balancing condition on the interior of $\sigma $ that is a positive
  multiple of the standard balancing condition of a polyhedron of
  $\R^{n}$.  By Proposition \ref{prop:7} the restriction of $f$ to the
  interior of $\sigma $ is concave in the usual sense.  Since $f$ is
  continuous, it is also concave on the whole of~$\sigma $.
\end{proof}

  \begin{defn}
    \label{def:projective}
    Let $\Pi$ be a polyhedral complex on the Euclidean polyhedral
    space $X$. A piecewise affine function $f$ on the open subset $U$
    is \emph{strictly concave on~$\Pi|_{U}$} if it is defined on $\Pi$
    and for every integer $k$, every $\tau\in \Pi|_{U}(k)$, every open subset
    $V\subset U$ with $V\cap \tau\ne \emptyset$ and every
    $c\in M^{+}_{k+1}(\Pi |_{V})$ with $c(\sigma)>0$ for some
    $\sigma\in \Pi|_{V}(k+1)$ with $\sigma\succ\tau$, we have that
    \begin{equation}
      \label{eq:48}
    (f\cdot c)(\tau)>0  .
    \end{equation}
    The polyhedral complex $\Pi$ is \emph{regular} on $U$ if there is
    a piecewise affine function $f$ on $U$ that is strictly concave on
    $\Pi|_{U}$.
  \end{defn}
  
  \begin{rem}
    The notion of strictly concave introduced in definition
    \ref{def:projective} is a direct generalization of the notion of
    strictly convex function on a fan used in toric geometry, see for
    instance \cite[page 68]{Fulton:itv}. But take into account that
    what is called convex in \emph{loc. cit.}, is called concave in
    the context of convex analysis.
  \end{rem}

For a convex polyhedron $\Delta$ in a Euclidean vector space, a
concave piecewise affine function $f $ on it induces a polyhedral
complex on $\Delta$, denoted by
\begin{equation}
  \label{eq:49}
  \Pi(f),
\end{equation}
given by its affinity loci or equivalently, by the projection in the
convex polyhedron of the faces of the upper envelope of the hypograph
of $f$ . In more explicit terms, a subset $\sigma \subset \Delta$ is
an element of $\Pi(f)$ if and only if there is an affine function
$\ell \colon \Delta\to \R$ such that
\begin{equation}
  \label{eq:24}
    f(x)=\ell(x) \text{ for } x\in \sigma \and  f(x)<\ell(x) \text{ for } x\in
\Delta \setminus \sigma.  
\end{equation}
Indeed, this is the minimal polyhedral complex on $\Delta$ where $f$ is
defined.

Classically, a polyhedral complex on a convex polyhedron is
called regular if it is of the form $\Pi(f)$ for a concave piecewise
affine function on it, see for instance \cite[Definition
2.2.10]{DeLoeraRambauSantos:tsaa}.  The next result shows that
Definition \ref{def:projective} is a generalization of this notion to 
arbitrary Euclidean polyhedral complexes.

\begin{prop}
  \label{prop:18}
  Let $\Delta$ be a full dimensional polyhedron of $\R^{n}$ with the
  induced structure
  of Euclidean polyhedral space and set $U=\relint(\Delta)$.  Let
  $\Pi$ be a polyhedral complex on $\Delta$ and $f\in \PA_\Pi(\Delta
  )$. Then $f$ is strictly concave on $\Pi|_{U} $ if and only if it is
  concave in the usual sense and $\Pi =\Pi (f)$.

  In particular, $\Pi$ is regular on $U$ in the sense of Definition
  \ref{def:projective} if and only if there is a concave piecewise
  affine function $f$ on $\Delta$ such that $\Pi=\Pi(f)$.
\end{prop}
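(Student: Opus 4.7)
The plan is to exploit the identity
\begin{displaymath}
(f\cdot c)(\tau) = \frac{S}{\varepsilon}\bigg(f(x) - \sum_{\sigma\succ\tau} \frac{c(\sigma)}{S}\, f(x+\varepsilon v_{\sigma\setminus\tau})\bigg)
\end{displaymath}
for $x\in\relint(\tau)\cap V$ and small $\varepsilon>0$, where $S=\sum_{\sigma}c(\sigma)$ and we identify $v_{\sigma\setminus\tau}$ with the corresponding unit vector in $\R^{n}$ pointing from $\tau$ into $\sigma$. This is obtained exactly as in the proof of Proposition \ref{prop:11}, using that $f$ is affine on each $\sigma$. The Minkowski relation $\sum_{\sigma}c(\sigma)\, v_{\sigma\setminus\tau}=0$ turns the right-hand side into the defect in the concavity inequality at $x$ against the convex combination $x=\sum(c(\sigma)/S)(x+\varepsilon v_{\sigma\setminus\tau})$.

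For the implication $(\Leftarrow)$, assume $f$ is concave in the usual sense and $\Pi=\Pi(f)$. Given $\tau$, $V$ and $c$ as in Definition \ref{def:projective}, the positivity of $c(\sigma_{0})$ for some $\sigma_{0}\succ\tau$ together with the Minkowski relation force $S>0$ and the existence of at least two distinct $\sigma_{1},\sigma_{2}\succ\tau$ with $c(\sigma_{i})>0$. Concavity gives $(f\cdot c)(\tau)\ge 0$, and equality would force $f$ to be affine on the convex hull of the points $p_{\sigma}=x+\varepsilon v_{\sigma\setminus\tau}$ with $c(\sigma)>0$, and in particular on the segment $[p_{\sigma_{1}},p_{\sigma_{2}}]$. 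Since $p_{\sigma_{i}}\in\relint(\sigma_{i})$, the common affine extension along that segment must agree near each endpoint with the affine part of $f|_{\sigma_{i}}$, forcing the two affine extensions to coincide; but this contradicts $\sigma_{1}$ and $\sigma_{2}$ being distinct maximal polyhedra of $\Pi(f)$. Hence $(f\cdot c)(\tau)>0$.

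For $(\Rightarrow)$, strict concavity of $f$ implies concavity in the sense of Definition \ref{def:3} (via Proposition \ref{prop:13}), hence weak concavity by Proposition \ref{prop:11}, hence concavity in the usual sense on $U$ by Proposition \ref{prop:7}, which extends to $\Delta$ by continuity (Remark \ref{rem:1}). Since $\Pi(f)$ is the minimal polyhedral structure on $\Delta$ on which $f$ is defined, $\Pi$ refines $\Pi(f)$; if the refinement were strict, some maximal polyhedron of $\Pi(f)$ would be subdivided in $\Pi$, yielding an interior codimension-one face $\tau\in\Pi|_{U}$ flanked by two top-dimensional $\sigma_{1},\sigma_{2}\in\Pi$ on which $f$ agrees with a single affine function. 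Taking $c=b|_{V}$ for the standard balancing $b$ on $U$ and a small $V$ around a point of $\tau$, the linearity of $f_{\sigma_{1}}=f_{\sigma_{2}}$ on the common affine hull and the Minkowski identity $b(\sigma_{1})v_{\sigma_{1}\setminus\tau}+b(\sigma_{2})v_{\sigma_{2}\setminus\tau}=0$ give $(f\cdot b|_{V})(\tau)=0$, contradicting strict concavity. Hence $\Pi=\Pi(f)$. The ``in particular'' statement follows at once from the equivalence.

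The main obstacle is the strict inequality in $(\Leftarrow)$: it rests on a careful use of the equality case in the concavity inequality, which pins $f$ to a single affine function on a convex hull of points drawn from relative interiors of distinct polyhedra of $\Pi$, combined with the defining property of $\Pi(f)$ that distinct maximal polyhedra carry distinct affine extensions.
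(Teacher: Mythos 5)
Your $(\Rightarrow)$ direction is essentially the paper's argument: strict concavity yields concavity (via Propositions~\ref{prop:13}, \ref{prop:11} and \ref{prop:7}, a slightly longer chain than the paper's direct observation that strict concavity applied to the standard balancing condition already gives $f\cdot b\ge 0$), and $\Pi(f)=\Pi$ is forced because a codimension-one face of $\Pi$ interior to a cell of $\Pi(f)$ would make $(f\cdot b)(\tau)=0$.

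Your $(\Leftarrow)$ direction has a genuine gap. After passing to the equality case $(f\cdot c)(\tau)=0$ and deducing that $f$ is affine on $\conv\{p_\sigma : c(\sigma)>0\}$, you restrict to one segment $[p_{\sigma_1},p_{\sigma_2}]$, assert that the affine parts of $f|_{\sigma_1}$ and $f|_{\sigma_2}$ are forced to coincide, and conclude this ``contradicts $\sigma_1$ and $\sigma_2$ being distinct maximal polyhedra of $\Pi(f)$.'' But in Definition~\ref{def:projective} the polyhedra $\sigma_i\in\Pi|_U(k+1)$ need not be maximal; distinct lower-dimensional polyhedra of $\Pi(f)$ can carry the same affine restriction of $f$ (already the two edges $\{x=0,\, y\ge 0\}$ and $\{y=0,\, x\ge 0\}$ of $\Pi(\min(0,x,y))$ on $\R^2$, with $f\equiv 0$, do), so no contradiction follows. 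Moreover, when more than two $\sigma$'s have $c(\sigma)>0$ (as the Minkowski relation often forces), $\aff(\sigma_1)\neq\aff(\sigma_2)$ and $[p_{\sigma_1},p_{\sigma_2}]$ neither contains $x$ nor lies in $\aff(\sigma_1)$ near $p_{\sigma_2}$, so agreement of one-dimensional restrictions along that segment does not determine, and therefore cannot compare, the $(k+1)$-dimensional affine pieces. Patching this requires a further argument that the affinity locus of the supporting hyperplane at $x$ is a cell of $\Pi(f)$ having several distinct $(k+1)$-faces with a common facet, and that this is impossible in a polyhedral complex.

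The paper avoids the equality-case analysis entirely. Since $\Pi=\Pi(f)$, equation \eqref{eq:24} gives an affine $\ell$ with $f=\ell$ on $\tau$ and $f<\ell$ strictly off $\tau$; as $p_\sigma=x+\varepsilon\,v_{\sigma\setminus\tau}\notin\aff(\tau)$ for $\varepsilon>0$, one has $f(p_\sigma)<\ell(p_\sigma)$ termwise, while $\sum_\sigma c(\sigma)\bigl(\ell(x)-\ell(p_\sigma)\bigr)=0$ by the Minkowski relation, giving $(f\cdot c)(\tau)>0$ outright. I recommend replacing your $(\Leftarrow)$ argument with this exposing-hyperplane computation.
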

\begin{proof}
  First assume that $f$ is strictly concave on $\Pi |_{U}$, and let
  $b\in M_{n}(\Pi|_{U})$ be the balancing condition on $\Pi|_{U}$
  induced from the standard one of $\R^{n}$, given in
  Example~\ref{exm:2}. Since $b(\sigma)>0$ for all
  $\sigma \in \Pi|_{U}(n)$, the condition \eqref{eq:48} implies that
  $f$ is weakly concave on $U$. By Proposition \ref{prop:7}, it is
  concave on $U$ in the usual sense and by continuity, it is also
  concave on the whole of $\Delta$. Since $f$ is defined on $\Pi$, we
  have that $\Pi(f)\le \Pi$. Moreover, the condition that
  $(f\cdot b)(\tau) >0$ for all $\tau\in \Pi|_{U}(n-1)$ implies that
  $\Pi(f)= \Pi$, proving the first implication.

  Conversely, suppose that $f$ is concave on $\Delta$ and
  $\Pi=\Pi(f)$.  For each integer $0< k\le n$ and each $\tau\in
  \Pi|_{U}(k)$ choose an affine 
  function $\ell$ as in~\eqref{eq:24}.  Choose then a point
  $x\in \relint(\tau )$ and $\varepsilon >0$ such that, for all
  $\sigma \in \Pi (f)({k+1})$ with $\sigma \succ\tau $ the
  condition $x+\varepsilon \, v_{\sigma \setminus \tau }\in \sigma $
  holds. Then for $c\in M^{+}_{k+1}(\Pi |_{V})$ with $c(\sigma)>0$ for
  some $\sigma\in \Pi|_{V}(k+1)$ with $\sigma\succ\tau$, we have that
  \begin{displaymath}
        (f\cdot c)(\tau )=\frac{1}{\varepsilon }\sum_{\sigma \succ \tau
    }c(\sigma )(f(x)-f(x+\varepsilon \, v_{\sigma
      \setminus \tau }))
    >\frac{1}{\varepsilon }\sum_{\sigma \succ \tau }c(\sigma
    )(\ell(x)-\ell(x+\varepsilon \, v_{\sigma \setminus \tau })) = 0,
  \end{displaymath}
  thanks to the formula in \eqref{eq:2}, the inequality
  in~\eqref{eq:24}, and the fact that $\ell$ is affine. This shows
  that $f$ is strictly concave on $\Pi$ and completes the proof.
\end{proof}

\begin{rem}
  The proof of Proposition \ref{prop:18} shows that in order to check that a
  piecewise affine function $f$ defined on a polyhedral complex $\Pi$
  on an $n$-dimensional polyhedron $\Delta $ is strictly concave, it
  suffices to show that $(f\cdot b)(\tau )> 0$ for every
  $\tau \in \Pi (n-1)$. This is not the case when $\Delta$ is replaced
  by an arbitrary polyhedral space.
\end{rem}

Let $\Pi $ be a polyhedral complex on $X$, and $f$ a piecewise affine
function on this Euclidean polyhedral space whose restriction to each
polyhedron of $\Pi$ is concave. Then we consider the set of polyhedra
in $X$ defined as
  \begin{equation}
    \label{eq:46}
    S(\Pi, f)=\bigcup_{\sigma\in \Pi} \Pi(f|_{\sigma}),
  \end{equation}
  where for each $\sigma\in \Pi$ we denote by $\Pi(f|_{\sigma})$ the
  polyhedral complex on $\sigma$ given by the affinity loci of the
  restriction of $f$ to this polyhedron as in \eqref{eq:49}.

  \begin{prop}\label{prop:16}
    The set $S(\Pi, f)$ is the minimal subdivision of $\Pi $ where the
    piecewise affine function $f$ is defined.
\end{prop}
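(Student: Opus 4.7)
The plan is to establish four things in order: first that $S(\Pi,f)$ is a polyhedral complex on $X$; second that it is a subdivision of $\Pi$; third that $f$ is defined on it; and fourth that it is minimal with this property. Three of these follow almost immediately from the construction: the elements of $S(\Pi,f)$ are polyhedra sitting affinely inside the polyhedra of $\Pi$, and on each such polyhedron $f$ agrees with an affine function by the defining property \eqref{eq:24} of $\Pi(f|_\sigma)$. The delicate part is checking that $S(\Pi,f)$ satisfies the compatibility axioms \eqref{item:evaluationmap} and \eqref{item:20} from Definition \ref{def:10}, and this will be where essentially all the work lies.

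The main technical step is a \emph{consistency lemma}: for any face relation $\tau\prec\sigma$ in $\Pi$, the polyhedra of $\Pi(f|_\sigma)$ that are contained in $\tau$ are exactly the polyhedra of $\Pi(f|_\tau)$. To prove this, I would use the intrinsic characterization of $\Pi(f|_\sigma)$ via \eqref{eq:24}: given $\rho\in\Pi(f|_\sigma)$ with $\rho\subset\tau$, there is an affine function $\ell$ on $\sigma$ with $f=\ell$ on $\rho$ and $f<\ell$ elsewhere on $\sigma$, hence in particular elsewhere on $\tau$; restricting $\ell$ to $\tau$ witnesses $\rho\in\Pi(f|_\tau)$. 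Conversely, given $\rho\in\Pi(f|_\tau)$ with witness $\ell$ on $\tau$, the function $f|_\sigma$ is concave and coincides with the affine $\ell$ on $\rho\subset\tau$; extending $\ell$ to an affine function on the affine hull of $\sigma$ and possibly tilting (adding a linear form vanishing on $\tau$ and sufficiently negative on $\sigma\setminus\tau$, which exists because the supporting affine functions of $\Pi(f|_\sigma)$ along $\tau$ form a non-empty cone by concavity of $f|_\sigma$) produces a witness for $\rho\in\Pi(f|_\sigma)$.

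Granted the consistency lemma, the polyhedral complex axioms fall into place. Faces of a member of $\Pi(f|_\sigma)$ are again members of $\Pi(f|_\sigma)$, so axiom \eqref{item:evaluationmap} is immediate. For \eqref{item:20}, given $\rho_1\in\Pi(f|_{\sigma_1})$ and $\rho_2\in\Pi(f|_{\sigma_2})$, set $\tau=\sigma_1\cap\sigma_2\in\Pi$; then $\rho_i\cap\tau\in\Pi(f|_{\sigma_i})$ lies in $\tau$, hence by the consistency lemma lies in $\Pi(f|_\tau)$, and since $\Pi(f|_\tau)$ is itself a polyhedral complex on $\tau$ the intersection $\rho_1\cap\rho_2=(\rho_1\cap\tau)\cap(\rho_2\cap\tau)$ is a common face. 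Local finiteness is inherited from $\Pi$, since every point $x\in X$ has a neighborhood meeting only finitely many $\sigma\in\Pi$, and each $\Pi(f|_\sigma)$ is itself finite on bounded subsets because $f|_\sigma$ is piecewise affine (as $f$ is defined on some polyhedral complex of $X$).

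Finally, subdivision and minimality come cheaply. By construction each element of $S(\Pi,f)$ sits inside a unique minimal $\sigma\in\Pi$ via an affine inclusion, so $S(\Pi,f)\ge\Pi$. For minimality, suppose $\Pi'$ is any subdivision of $\Pi$ on which $f$ is defined. Fix $\sigma\in\Pi$ and look at the collection of $\sigma'\in\Pi'$ contained in $\sigma$; this is a polyhedral structure on $\sigma$ (the restriction, which is a genuine polyhedral complex because $\sigma$ is a union of members of $\Pi'$) on which $f|_\sigma$ is defined. The already-recorded minimality of $\Pi(f|_\sigma)$ among polyhedral complexes on $\sigma$ defining $f|_\sigma$ then yields $\Pi'|_\sigma\ge\Pi(f|_\sigma)$ for every $\sigma\in\Pi$, whence $\Pi'\ge S(\Pi,f)$. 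The hard part, as indicated, is the consistency lemma and its use in axiom \eqref{item:20}; everything else is bookkeeping.
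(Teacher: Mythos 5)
Your proposal matches the paper's approach: both rest on the consistency lemma that the polyhedra of $\Pi(f|_\sigma)$ contained in a face $\tau$ are exactly those of $\Pi(f|_\tau)$, from which the complex axioms and minimality follow (the paper compresses all of this into two sentences). One small slip in the extension direction of your consistency lemma: since $f$ is concave, the supporting affine function lies \emph{above} $f$, so the added linear form must be sufficiently \emph{positive}, not negative, on $\sigma\setminus\tau$ — but that direction is not actually needed, since your verification of axiom~\eqref{item:20} only invokes the restriction direction, and once $\{\rho\in\Pi(f|_\sigma):\rho\subset\tau\}\subseteq\Pi(f|_\tau)$ is known, both being polyhedral complexes covering $\tau$ already forces equality.
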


\begin{proof}
  For each $\tau, \sigma\in \Pi$ with $\tau\prec \sigma$, the
  restriction to $\tau $ of the polyhedral complex $\Pi(f|_{\sigma})$
  agrees with $\Pi(f|_{\sigma})$. This implies that $S(\Pi,f)$ is a
  polyhedral complex on $X$, and Proposition \ref{prop:18} implies
  that it is the minimal subdivision of $\Pi $ where $f$ is defined.
\end{proof}

We next show that any polyhedral complex satisfying a finiteness
condition admits a regular subdivision.

\begin{thm}\label{prop:projective} Let
  $\Pi $ be a polyhedral complex on $X$ such that $\Pi|_{U}$ is
  finite.  Then there is a strongly concave piecewise affine function
  $f$ on $X$ such that $S(\Pi, f)|_{U}$ is finite and $f$ is strictly
  concave on $S(\Pi ,f)|_{U}$.  In particular, $S(\Pi, f)$ is a
  subdivision of $\Pi$ that is regular on $U$.
\end{thm}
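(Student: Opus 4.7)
The plan is to construct $f$ as the pullback under $\iota$ of a strictly concave piecewise-affine function on $H_X$, built by truncating a smooth strictly concave function by finitely many tangent planes. This adapts, to the quasi-embedded setting, the classical proof that any finite polyhedral complex in a Euclidean space admits a regular refinement.

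Fix a $C^{2}$ strictly concave function $g\colon N_X\to\R$ (for instance $g(y)=-\|y-y_0\|^{2}$), and a finite set $T\subset H_X$ consisting of sufficiently many points chosen in the relative interior of each $\iota(\sigma)$ for $\sigma\in\Pi|_U$. Define
\begin{displaymath}
\tilde f(y)=\min_{t\in T}\bigl(g(t)+Dg(t)(y-t)\bigr),\qquad y\in H_X,
\end{displaymath}
and set $f:=\tilde f\circ\iota$. Then $\tilde f$ is concave and piecewise affine on $H_X$, so by Definition~\ref{def:6}, $f$ is strongly concave on $X$. Since $T$ is finite, $\tilde f$ has finitely many linearity chambers, so $\Pi(f|_\sigma)$ is finite for each $\sigma\in\Pi|_U$. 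Because $\Pi|_U$ is finite by hypothesis, $S(\Pi,f)|_U=\bigcup_{\sigma\in\Pi|_U}\Pi(f|_\sigma)$ is finite as well, and by Proposition~\ref{prop:16} it is a subdivision of $\Pi|_U$ on which $f$ is defined.

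The core of the argument is checking strict concavity on $S(\Pi,f)|_U$. Let $\tau\in S(\Pi,f)|_U(k)$, let $V\subset U$ be an open neighborhood of an interior point of $\tau$, and take $c\in M_{k+1}^{+}(S(\Pi,f)|_V)$ with $c(\sigma_{*})>0$ for some $\sigma_{*}\succ\tau$. Using the balancing relation $\sum_{\sigma\succ\tau}c(\sigma)\,v_{\sigma\setminus\tau}=0$ and the linearity of $f_{\sigma_*}$, the defining formula for $(f\cdot c)(\tau)$ rewrites as
\begin{displaymath}
(f\cdot c)(\tau)=\sum_{\sigma\succ\tau}c(\sigma)\,(f_{\sigma_*}-f_\sigma)(v_{\sigma\setminus\tau}).
\end{displaymath}
Each summand is non-negative because the tangent plane determining $f_{\sigma_*}$ is a global upper envelope of $\tilde f$, so its directional derivative in $v_{\sigma\setminus\tau}$ dominates that of the tangent plane determining $f_\sigma$ (the latter being the unique tangent agreeing with $\tilde f$ on $\iota(\sigma)$). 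Strict positivity of at least one summand follows by taking $T$ generic enough, since strict concavity of $g$ forces $Dg(t)\ne Dg(t')$ for distinct $t,t'\in T$, and a mild genericity condition ensures that these gradients remain pairwise distinct when evaluated on each of the finitely many normal vectors $v_{\sigma\setminus\tau}$ that arise.

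The main obstacle is making this genericity argument rigorous. Because $\iota$ need not be globally injective, two polyhedra of $\Pi|_U$ with disjoint interiors in $X$ may have overlapping images in $H_X$, so distinct chambers of $S(\Pi,f)|_U$ can share the same anchor point $t\in T$, and their associated linear functionals $f_\sigma$ on $N_X$ coincide as a result. I would handle this by assembling $\tilde f$ from localized tangent perturbations supported near $\iota(\sigma)$ for each $\sigma\in\Pi|_U$ separately, summing the contributions, and then verifying, polyhedron-by-polyhedron in $\Pi|_U$, that on each codimension-one wall of $S(\Pi,f)|_U$ the local perturbations produce the required strict bending. By Definition~\ref{def:projective}, this yields that $f$ is strictly concave on $S(\Pi,f)|_U$, and hence $S(\Pi,f)$ is regular on $U$, completing the proof.
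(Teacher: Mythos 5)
Your overall strategy matches the paper's: build $f$ as $\iota^{*}$ of a concave piecewise affine function on $H_{X}$ that bends at (the images of) the faces of $\Pi|_{U}$. But the specific construction is genuinely different, and as written it has a gap. You define $\tilde f$ as the lower envelope of tangent planes to a smooth strictly concave $g$ at finitely many points $T\subset H_{X}$; the chambers of such a $\tilde f$ form a power diagram determined by $T$, and its codimension-one bending loci are pieces of the hyperplanes $\{T_{t}=T_{t'}\}$, $t,t'\in T$. There is no reason for these to contain $\iota(\tau)$ for the codimension-one faces $\tau$ of $\Pi|_{U}$ that are not created by the subdivision. When $\iota(\tau)$ lies in the interior of a single chamber of $\tilde f$, the same tangent plane is active on $\iota(\sigma)$ for all $\sigma\in S(\Pi,f)|_{U}$ with $\sigma\succ\tau$, so $f_{\sigma}=f_{\sigma_{*}}$ for all of them and every summand in your expression $\sum_{\sigma}c(\sigma)(f_{\sigma_{*}}-f_{\sigma})(v_{\sigma\setminus\tau})$ vanishes, giving $(f\cdot c)(\tau)=0$. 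Your ``genericity'' argument does not repair this: distinct gradients $Dg(t)\neq Dg(t')$ are irrelevant when the same $t$ governs both sides of $\tau$, and a generic $T$ in fact guarantees the power diagram of $T$ does \emph{not} pass exactly through $\iota(\tau)$. The issue you flag (non-injectivity of $\iota$) is real but secondary; the primary problem is that your bending loci are not adapted to $\Pi$, and the suggestion of ``localized tangent perturbations supported near $\iota(\sigma)$'' is too vague to close it --- tangent planes are global affine objects and cannot be localized.

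The paper avoids this entirely by using a construction that is adapted to $\Pi$ by design: for each $\sigma\in\Pi|_{U}$ it sets $f'_{\sigma}=\min(0,\ell_{\sigma,1},\dots,\ell_{\sigma,m_{\sigma}})$, where the $\ell_{\sigma,j}$ are the defining inequalities of $\iota_{X}(\sigma)$, and takes $f'=\sum_{\sigma}f'_{\sigma}$ rather than a single min. Each $f'_{\rho}$ vanishes on $\iota_{X}(\rho)$ and is strictly negative off it, so when $\tau\in S(\Pi,f)|_{U}$ has the same dimension as the minimal $\rho\in\Pi|_{U}$ containing it, the term $f'_{\rho}$ produces strict bending at $\tau$ while the remaining summands only add more concavity; the case $\dim\rho>\dim\tau$ is handled by Proposition~\ref{prop:18}. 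This split into two cases according to $\dim\rho$ versus $\dim\tau$ is what you would need to reproduce, and the sum-over-$\sigma$ structure of the paper's $f'$ (not a single min) is what makes the case $\dim\rho=\dim\tau$ work. If you want to salvage your approach, you would need to choose $T$ so that the power diagram of $T$ refines $\Pi|_{U}$ --- essentially forcing $\iota(\tau)$ to be a perpendicular bisector (or power bisector) between appropriate pairs of points in $T$ --- which is a nontrivial constraint and would need to be established, not assumed generic.
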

\begin{proof}
  For each $\sigma\in \Pi|_{U}$ choose linear functions
  $\ell_{\sigma,j}$ on $N_{X}$, $j=1,\dots, m_{\sigma}$, defining the
  polyhedron $\iota_{X}(\sigma)$ as the subset of points
  $x\in H_{X}$ such that $ \ell_{\sigma,j}(x)\ge 0$ for all
  $j$.  Consider the concave piecewise affine function
  $f'_{\sigma}\colon H_{X}\to \R$ defined, for
  $x \in H_{X}$, as
\begin{equation}\label{eq:14}
    f'_{\sigma }(x)=\min(0,\ell_{\sigma,1}(x), \dots,
    \ell_{\sigma,m_{\sigma}}(x) ).
  \end{equation}
Set
  \begin{displaymath}
    f'= \hspace{2mm}\sum_{\mathclap{\sigma \in {\Pi|_U} }}f'_{\sigma } \and  f=f'\circ \iota_{X}.
  \end{displaymath}
  Since $\Pi|_{U}$ is finite, $f'$ is well-defined, concave and
  piecewise linear. Since $f$ is the pull-back to $X$ of the concave
  function $f'$ on $H_{X}$, it is a strongly concave piecewise affine
  function on $X$. By Proposition \ref{prop:16}, we can then consider
  the associated subdivision $S(\Pi,f)$ of $\Pi$. By construction, the
  restriction of $S(\Pi,f)$ to the open subset $U$ is also finite.

  We need to show that $f$ is strictly concave on $S(\Pi,f)$.
  So let $\tau \in S(\Pi,f)|_U(k-1)$ with $k\ge 1$. Let
  $c\in M_{k}^{+}(S(\Pi,f)|_{U})$ with $c(\sigma)>0$ for some
  $\sigma\in S(\Pi,f)|_{U}(k)$ with $\sigma\succ\tau$.  Let $\rho$ be the
  minimal polyhedron of $\Pi|_{U}$ containing $\tau$.

  If $\dim(\rho ) > \dim(\tau)$ then necessarily
  $ \tau \cap \relint(\rho)\ne \emptyset$ since otherwise $\tau $
  would be contained in a face of $\rho $, contradicting the
  minimality of $\rho $. In this case Proposition~\ref{prop:18}
  implies that $(f\cdot c)(\tau)>0$. Else when
  $\dim(\rho )=\dim(\tau )$, we number as
  $\sigma _{1},\dots,\sigma _{r}$ the polyhedra in $ S(\Pi ,f)|_V(k)$
  with $\tau \prec \sigma _{i}$ and $c(\sigma _{i})>0$. We have that
  $r>0$ by the hypothesis on $c$. Choose a point $x$ in the relative
  interior of $\tau $ and $\varepsilon >0$ such that, for
  $i=1,\dots,r$,
\begin{equation}
  \label{eq:56}
  x_{i}\coloneqq 
  x+\varepsilon \, v_{\sigma _{i}\setminus
    \tau }\in \relint(\sigma _{i}).
\end{equation}
Write $\alpha _{i}={c(\sigma _{i})}/{\sum_{j=1}^{r}c(\sigma
  _{j})}$. Since $c$ is a Minkowski weight, we have that 
$ \iota_{X}(x)=\sum \alpha _{i}\, \iota_{X}(x_{i})$.  Moreover,
\begin{displaymath}
  (f\cdot c)(\tau )=\frac{\sum_{j} c(\sigma _{j})}{\varepsilon
  }\left(f'(\iota_{X}(x))-\sum_{i=1}\alpha _{i}\, f'(\iota_{X} (x_{i}))\right).
\end{displaymath}
Since $\tau \subset \rho $, $\dim(\tau )=\dim(\rho )$ and $x_{i}\in
\relint(\sigma _{i})$ where $\dim(\sigma _{i})>\dim(\tau )$ and
$\sigma _{i}\succ \tau $, we deduce that $x_{i}\not\in \rho $. Let
$f'_{\rho }$ be the function defined as \eqref{eq:14} for $\rho
$. Then $ f'_{\rho }(\iota_{X}(x))=0$ and, for each $i$, we also
have that $f'_{\rho }(\iota_{X}(x_{i})) <0$ because $x_{i}\notin
\rho $. This implies that
\begin{equation*}
  f'_{\rho }(\iota(x))-\sum_{i=1}\alpha _{i}f'_{\rho }(\iota (x_{i}))>0.
\end{equation*}
Since $f'$ is obtained from $f'_{\rho }$ by adding to it a concave
piecewise affine function, we deduce that
\begin{displaymath}
  f'(\iota(x))-\sum_{i=1}\alpha _{i}f'(\iota (x_{i}))>0.
\end{displaymath}
Hence $f$ is strictly concave on $S(\Pi,f)|_U$ and $S(\Pi,f)$ is a
subdivision of $\Pi$ that is regular on $U$, as stated.
\end{proof}
   
\begin{thm}\label{thm:1} 
Let $\Pi$ be a polyhedral complex on $X$ such that $\Pi|_{U}$ is
  finite. Then there is a subdivision $\Pi'$ of $\Pi$ such that for
  every piecewise affine function $g\in \PA_{\Pi'}(U)$ there are concave
  piecewise affine functions $f_{1},f_{2}\in \CPA_{\Pi'}(U)$
  with
    \begin{displaymath}
      g=f_{1}-f_{2}.
    \end{displaymath}
  \end{thm}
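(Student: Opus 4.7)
The plan is to apply Theorem~\ref{prop:projective} to the polyhedral complex $\Pi$, obtaining a strongly concave piecewise affine function $f_{0}$ on $X$ and a subdivision $\Pi' = S(\Pi, f_{0})$ of $\Pi$ such that $\Pi'|_{U}$ is finite and $f_{0}$ is strictly concave on $\Pi'|_{U}$. By Proposition~\ref{prop:16}, $f_{0}$ is defined on $\Pi'$, and by Proposition~\ref{prop:11} it is concave, so $f_{0}\in \CPA_{\Pi'}(U)$. Given $g\in \PA_{\Pi'}(U)$, the aim is to produce a single constant $\lambda>0$ large enough that $g+\lambda f_{0}\in \CPA_{\Pi'}(U)$; then the decomposition $g=(g+\lambda f_{0})-\lambda f_{0}$ solves the problem, with $f_{1}=g+\lambda f_{0}$ and $f_{2}=\lambda f_{0}$.

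By Proposition~\ref{prop:13}, verifying $g+\lambda f_{0}\in \CPA_{\Pi'}(U)$ amounts to checking that for every open $V\subset U$, every $k\ge 0$, every $c\in M_{k+1}^{+}(\Pi'|_{V})$ and every $\tau\in \Pi'|_{V}(k)$,
\[
(g\cdot c)(\tau)+\lambda\,(f_{0}\cdot c)(\tau)\ge 0.
\]
Both products at $\tau$ are linear functionals of the restriction $c|_{\Pi'_{\tau}}$, where $\Pi'_{\tau}$ denotes the finite set of $(k{+}1)$-dimensional polyhedra of $\Pi'|_{U}$ having $\tau$ as a facet. If this restriction is zero the inequality is trivial; otherwise, strict concavity of $f_{0}$ (applied after shrinking $V$ to a small neighborhood of a point of $\relint(\tau)$, where the only $k$-dimensional polyhedron in $\Pi'|_{V}$ is $\tau$ itself) gives $(f_{0}\cdot c)(\tau)>0$.

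The main obstacle is to arrange the uniformity of $\lambda$ across all $\tau$, $V$ and $c$. For each $\tau\in \Pi'|_{U}$ of dimension $k$, consider the polyhedral cone
\[
C_{\tau}=\Big\{c\in \R_{\ge 0}^{\Pi'_{\tau}}\;\Big|\;\sum_{\sigma\in \Pi'_{\tau}}c(\sigma)\,v_{\sigma\setminus\tau}=0\Big\}.
\]
Any $c\in C_{\tau}$ arises as (the relevant restriction of) a positive Minkowski weight on $\Pi'|_{V}$ for a sufficiently small neighborhood $V$ of an interior point of $\tau$, so by strict concavity the linear functional $L_{\tau}(c)=(f_{0}\cdot c)(\tau)$ is strictly positive on $C_{\tau}\setminus\{0\}$. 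A standard homogeneity-and-compactness argument---the slice $\{c\in C_{\tau}\colon L_{\tau}(c)=1\}$ is closed and bounded, hence compact---shows that the continuous linear functional $c\mapsto (g\cdot c)(\tau)$ is bounded below on this slice, yielding a constant $\lambda_{\tau}\ge 0$ with $(g\cdot c)(\tau)+\lambda_{\tau}\,L_{\tau}(c)\ge 0$ for all $c\in C_{\tau}$.

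Since $\Pi'|_{U}$ is finite, $\lambda:=\max_{\tau}\lambda_{\tau}$ is a finite constant, and by the preceding paragraph it validates the inequality above for every choice of $\tau\in \Pi'|_{V}(k)$, every $V\subset U$ and every $c\in M_{k+1}^{+}(\Pi'|_{V})$. Thus $g+\lambda f_{0}\in \CPA_{\Pi'}(U)$, which together with $\lambda f_{0}\in \CPA_{\Pi'}(U)$ yields the desired decomposition.
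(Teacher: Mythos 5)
Your proof is correct and takes essentially the same approach as the paper: both invoke Theorem~\ref{prop:projective} to produce the subdivision $\Pi'$ and the strictly concave function, and both exploit the strict positivity $(f_{0}\cdot c)(\tau)>0$ against the local functionals $c\mapsto(g\cdot c)(\tau)$. The paper packages this as ``$f_{0}$ lies in the interior of the cone $\CPA_{\Pi'}(U)\subset\PA_{\Pi'}(U)$, hence the interior is nonempty, hence $\CPA_{\Pi'}(U)-\CPA_{\Pi'}(U)=\PA_{\Pi'}(U)$,'' whereas you make the same underlying step explicit: for each of the finitely many $\tau\in\Pi'|_{U}$, compactness of the slice $\{c\in C_{\tau}: (f_{0}\cdot c)(\tau)=1\}$ produces a bound $\lambda_{\tau}$, and the maximum $\lambda$ yields the constructive decomposition $g=(g+\lambda f_{0})-\lambda f_{0}$. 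What this buys you is a proof that does not lean on the (slightly elided) fact that the set of functionals $\ell_{k,\tau,V,c}$ dualizes $\CPA_{\Pi'}(U)$ as a closed polyhedral cone; you verify directly that $g+\lambda f_{0}$ satisfies the criterion of Proposition~\ref{prop:13}. The two arguments are otherwise the same in content.

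One small point worth spelling out: when you pass from ``$(g\cdot c)(\tau)+\lambda(f_{0}\cdot c)(\tau)\ge 0$ for all $c\in C_{\tau}$'' to the conclusion that the product $(g+\lambda f_{0})\cdot c$ is positive for an arbitrary open $V\subset U$ and arbitrary $c\in M_{k+1}^{+}(\Pi'|_{V})$, you are implicitly using that $(g\cdot c)(\tau)$ depends only on the restriction of $c$ to the $(k{+}1)$-dimensional polyhedra having $\tau$ as a facet, and that this restriction lies in $C_{\tau}$ (balancing is a local condition at $\tau$). You state this dependence, but the logical direction you actually need is the one just described, not only that every $c\in C_{\tau}$ is realized by some Minkowski weight on a small $V$. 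This is easy, but it is the hinge on which the uniformity claim turns, and deserves a sentence.
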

\begin{proof}
  By Theorem \ref{prop:projective}, there is a subdivision $\Pi'$ of
  $\Pi$ and a strictly concave function $f$ on $\Pi'|_{U} $.  The set
  $ \PA_{\Pi'}(U)$ is a finite dimensional vector space, and
  $\CPA_{\Pi'}(U)$ is a cone inside it. For each $k\in \Z_{\ge 0}$ and
  $\tau \in \Pi(k-1)$ choose an open subset $V\subset U$ such that
  $\tau$ is the only polyhedron of $\Pi(k-1)$ intersecting $V$. Each
  Minkowski weight $c\in M_{k}(\Pi|_{V})$ gives a functional
  $ \ell_{k,\tau,V,c}\colon \PA_{\Pi'}(U) \rightarrow \R $ defined,
  for $g\in \PA_{\Pi'}(U)$, by
   \begin{equation}
    \label{eq:45}
 \ell_{k,\tau,V,c}(g)= (g\cdot c)(\tau).  
  \end{equation}
  Since $f$ is strictly concave on~$\Pi|_{V}$, we have that
  $\ell_{k,\tau,V,c}(f)>0$ for all $k$, $\tau$, $V$ and $c$ as above.
  Hence $f$ lies in the interior of the cone $\CPA_{\Pi'}(U)$, and so
  the interior of this cone is nonempty. Thus
  \begin{displaymath}
\PA_{\Pi'}(U)=\CPA_{\Pi'}(U) - \CPA_{\Pi'}(U),
  \end{displaymath}
which gives the result.
\end{proof}

\begin{rem}\label{rem:fairly-concave}  
  The class of concave piecewise affine functions satisfies two
  fundamental properties, as it follows from its definition and
  Theorem \ref{thm:1}:
\begin{enumerate}
  \item \label{item:1}  they preserve the positivity of Minkowski
    weights,
  \item \label{item:2} piecewise affine functions can be written as a
    difference of concave piecewise affine functions.
  \end{enumerate}
  By contrast, strongly concave piecewise affine functions do not
  satisfy the condition~\eqref{item:2} and weakly concave functions do
  not satisfy the condition \eqref{item:1}, as it can be verified in
  the situation of Example \ref{exm:4}
\end{rem}
  
\section{Concave functions on polyhedral spaces}
\label{sec:conc-funct-polyh}

In this section we given different notions of concavity for functions on polyhedral spaces,
extending the previous notions  for piecewise affine
functions. We denote by $X$ a quasi-embedded polyhedral space with
quasi-embedding $\iota\colon X\to N$, and $U$ an open subset of it.

First we first introduce notions of convex combinations of points.

\begin{defn}\label{def:5}
  A \emph{convex combination} in $U$ is a triple
  \begin{equation*}
   (x,(x_{i})_{i\in I}, (\nu _{i})_{i\in I})  
 \end{equation*}
 with $x\in U$, and where 
 $ (x_{i})_{i\in I}$ and $ (\nu
 _{i})_{i\in I}$ respectively denote finite collections of points of $U$ and  of
 nonnegative real numbers satisfying 
    \begin{displaymath}
   \sum_{i\in I}\nu _{i}=1 \and    \sum_{i\in I}\nu _{i} \,
   \iota(x_{i}) =\iota (x).
 \end{displaymath}
 
 A convex combination in $U$ is \emph{polyhedral} if there is a
 polyhedral complex $\Pi$ on $X$ and  polyhedra $\tau\in \Pi$ with
 $x\in \tau $ and $\sigma _{i}\in \Pi$, $i\in I$, such that
 $\sigma_{i}\succ \tau$, $\sigma _{i}\subset U$ and
 $x_{i}\in \sigma _{i}$.

   If $X$ is Euclidean and of pure dimension, and $U$ is
 balanced, a convex combination in $U$ is \emph{balanced} if it is
 polyhedral and, with notation as above and setting  $k=\dim(\tau)+1$,
satisfies the following conditions:
 \begin{enumerate}
 \item \label{item:5} the polyhedra $\sigma _{i}$, $i\in I$, are the
   different polyhedra in $ \Pi(k)$ having $\tau$ as a face,
 \item \label{item:17} there is a $\beta _{U}$-positive Minkowski
   weight $c\in M_{k}(\Pi|_{U} )$ (Definition ~\ref{def:8}) such that
$\nu _{i }\, d(x_{i },{\tau} ) =  c(\sigma _{i})$,
   $i\in I$, where $d(x_{i },{\tau} )$ denotes the distance between
   the point $\iota(x_{i})$ and the affine subspace
   $\iota_{\tau}(H_{\tau })$ of $ N$.
\end{enumerate}
\end{defn}

We next give examples  illustrating the distinction between the
different notions of convex combination.

\begin{exmpl}\label{exm:3}
  We place ourselves in the setting of Example \ref{exm:4}.  For
  $p\in \R_{>0}$  set
    \begin{displaymath}
      x=(p,2), \quad x_{1}=(p,3) \and 
      \nu _{1}=1.   
    \end{displaymath}
    Then $(x,(x_{1}),(\nu _{1}))$ is a convex combination since
    $\iota(x)=\iota(x_{1})=p$ and so we have that
    $ \sum_{i=1}^{1}\nu_{i}\, \iota(x_{i})=\iota(x)$.  However, this
    convex combination is not polyhedral: there is no polyhedral
    complex on $X$ such that $x_{1}$ lies in a polyhedron with a face
    containing the point $x$.
    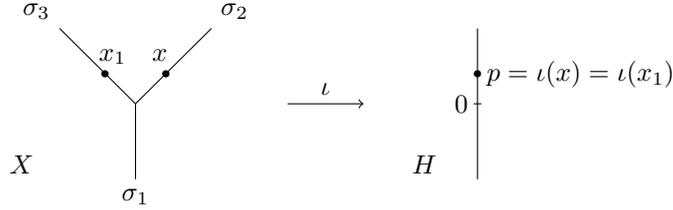
\begin{figure}[h]
\begin{center}
\begin{tikzpicture}[scale=1]
    %\draw (0,0) -- (1,0);
    \draw (0,0) -- (1,1);
    \draw (0,0)--(-1,1);
    \draw (0,0)--(0,-1); 
    \draw (0.4, 0.4) node{\tiny{$\bullet$}};
     \draw (0.4, 0.4) node[above]{$x_{\phantom{1}}$};
     \draw (-0.4,0.4) node{\tiny{$\bullet$}};
    \draw (-0.3,0.4) node[above]{$x_1$};
    \draw (-1.5,-0.8) node{$X$};
    \draw (1,1) node[above right]{$\sigma_2$};
     \draw (-1,1) node[above left]{$\sigma_3$};
     \draw (0,-1) node[below]{$\sigma_1$};
     \draw[->] (2,0) to node[above]{$\iota$} (3,0); 
    
    \draw (4.5,-1) -- (4.5,1);
     \draw (4.5,0) node[left]{$0$};
     \draw (3.8,-0.8) node{$H$};
     \draw (4.5,0.4) node{\tiny{$\bullet$}};
     \draw (4.5,0.4) node[right]{{$p = \iota(x) = \iota(x_1)$}};
     \draw[thin] (4.45,0) -- (4.55,0);
\end{tikzpicture}
\end{center}
\caption{A convex combination that is not polyhedral}\label{fig:conv-comb1}
\end{figure}

Set also
  \begin{displaymath}
x=(0,1), \quad x_{1}=(p,1), \quad x_{2}=(p,2) \and \nu _{i}=\frac{1}{2}, \quad i=1,2. 
  \end{displaymath}
  This is a polyhedral convex combination for the polyhedral complex
  $\Pi$ in Example~\ref{exm:4}: we have that $x\in \tau$ and
  $x_{i}\in \sigma _{i}$, $i=1,2$, and so
  $ \sum_{i=1}^{2}\nu _{i}\, \iota(x_{i})=0=\iota(x)$. This polyhedral
  convex combination is not balanced: setting $\nu_3 = 0$, the vectors
    \begin{displaymath}
      (\nu _{i}\, d(x_i,x))_{i=1,2,3}=\Big( \frac{p}{2}, \frac{p}{2},0 \Big) \and
      (b(\sigma _{i}))_{i=1,2,3}=(2,1,1)
    \end{displaymath}
    are not proportional.
     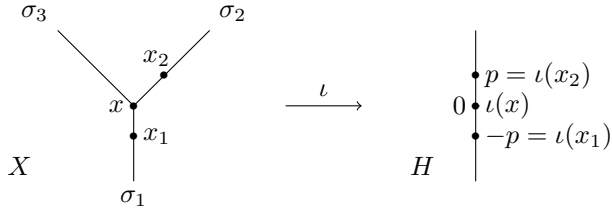
\begin{figure}[h]
\begin{center}
\begin{tikzpicture}[scale=1]
    %\draw (0,0) -- (1,0);
    \draw (0,0) -- (1,1);
    \draw (0,0)--(-1,1);
    \draw (0,0)--(0,-1); 
    \draw (0.4, 0.4) node{\tiny{$\bullet$}};
     \draw (0.3, 0.4) node[above]{$x_2$};
     \draw (0,-0.4) node{\tiny{$\bullet$}};
    \draw (0,-0.4) node[right]{$x_1$};
    \draw (0,0) node{\tiny{$\bullet$}};
    \draw (0,0) node[left]{$x$};
   \draw (-1.5,-0.8) node{$X$};
    \draw (1,1) node[above right]{$\sigma_2$};
     \draw (-1,1) node[above left]{$\sigma_3$};
     \draw (0,-1) node[below]{$\sigma_1$};
     \draw[->] (2,0) to node[above]{$\iota$} (3,0); 
     
      \draw (4.5,-1) -- (4.5,1);
       \draw (4.5,0) node[left]{$0$};
       \draw[thin] (4.45,0) -- (4.55,0);
       \draw (3.8,-0.8) node{$H$};
       
     \draw (4.5,0) node{\tiny{$\bullet$}};
     \draw (4.5,0) node[right]{{$\iota(x)$}};

      \draw (4.5,0.4) node{\tiny{$\bullet$}};
     \draw (4.5,0.4) node[right]{{$p = \iota(x_2)$}};
     \draw (4.5,-0.4) node{\tiny{$\bullet$}};
     \draw (4.5,-0.4) node[right]{{$-p = \iota(x_1)$}};
     
\end{tikzpicture}
\end{center}
\caption{A convex combination that is polyhedral but not balanced}\label{fig:conv-comb2}
\end{figure}

Finally, the points and positive real numbers
\begin{displaymath}
x=(0,1), \quad x_{1}=(2\, p,1), \quad x_{2}=(p,2), \quad x_{3}=(p,3) \and
\nu _{i}=\frac{1}{3}, \ i=1,2,3,
\end{displaymath}
do form a balanced  convex combination in $X$.
     \begin{figure}[h]
\begin{center}
\begin{tikzpicture}[scale=1]
    %\draw (0,0) -- (1,0);
     \draw (0,0) -- (1,1);
    \draw (0,0)--(-1,1);
    \draw (0,0)--(0,-1); 
    \draw (0.4, 0.4) node{\tiny{$\bullet$}};
     \draw (0.3, 0.4) node[above]{$x_2$};
      \draw (-0.4, 0.4) node{\tiny{$\bullet$}};
     \draw (-0.3, 0.4) node[above]{$x_3$};
     \draw (0,-0.8) node{\tiny{$\bullet$}};
    \draw (0,-0.8) node[right]{$x_1$};
    \draw (0,0) node{\tiny{$\bullet$}};
    \draw (0,0) node[left]{$x$};
   \draw (-1.5,-0.8) node{$X$};
    \draw (1,1) node[above right]{$\sigma_2$};
     \draw (-1,1) node[above left]{$\sigma_3$};
     \draw (0,-1) node[below]{$\sigma_1$};
     \draw[->] (2,0) to node[above]{$\iota$} (3,0); 
     
      \draw (4.5,-1) -- (4.5,1);
       \draw (4.5,0) node[left]{$0$};
       \draw[thin] (4.45,0) -- (4.55,0);
       \draw (3.8,-0.8) node{$H$};
       
     \draw (4.5,0) node{\tiny{$\bullet$}};
     \draw (4.5,0) node[right]{{$\iota(x)$}};
     
      \draw (4.5,0.4) node{\tiny{$\bullet$}};
     \draw (4.5,0.4) node[right]{{$p = \iota(x_2) = \iota(x_3)$}};
     \draw (4.5,-0.8) node{\tiny{$\bullet$}};
     \draw (4.5,-0.8) node[right]{{$-2p = \iota(x_1)$}};
     
\end{tikzpicture}
\end{center}
\caption{A balanced convex combination}\label{fig:conv-comb3}
\end{figure}
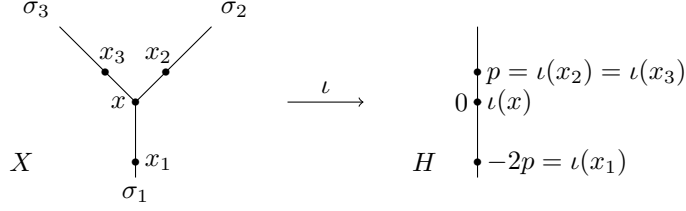
\end{exmpl}

The basic example of balanced convex combination arises when the
$\beta _{U}$-positive Minkowski cycle in Definition \ref{def:5} is the balancing
condition itself.

  \begin{exmpl}\label{exm:10}
    Suppose that $X$ is Euclidean, of pure dimension $n$ and
    balanced, and let $\Pi $ be a polyhedral complex on $X$ with
    balancing condition $b$.  For $\tau \in \Pi (n-1)$ let
    $\sigma _{i}$, $i=1,\dots, r$, be the different polyhedra in
    $\Pi (n)$ having $\tau$ as a facet and set
  \begin{displaymath}
    \nu _{i}=\frac{b(\sigma _{i})}{\sum_{i=1}^{r}b(\sigma _{i})},
    \quad i=1,\dots, r.
  \end{displaymath}
  Let $x\in \relint(\tau )$ and
  $(x,(y_{i})_{1\le i\le r}, (\nu_{i})_{1\le i\le r})$ a convex
  combination contained in $\relint(\tau)$. Let $\varepsilon >0 $ so
  that the point 
  \begin{displaymath}
    x_{i}\coloneqq y_{i}+\varepsilon \, v_{\sigma _{i}\setminus \tau } 
  \end{displaymath}
  lies in $\sigma _{i}$ for all $i$. Then
  $(x,(x_{i})_{1\le i\le r}, (\nu_{i})_{1\le i\le r})$ is a balanced
  convex combination.
\end{exmpl}

\begin{rem}
  \label{rem:4}
  The construction in Example \ref{exm:10} provides balanced convex
  combinations centered at any point in the relative interior of a
  polyhedron of dimension $n-1$. Considering arbitrary
  $\beta _{U}$-positive Minkowski weights as in Definition \ref{def:5}
  allows to produce balanced convex combinations centered at any point
  of $U$.
\end{rem}

\begin{defn}\label{def:1} 
  Let $f\colon U\to \R$ be a function. Then $f$ is \emph{strongly
    concave} if for every convex combination in $U$, the inequality
    \begin{equation}\label{eq:5}
    f(x)\ge \sum_{i\in I}\nu _{i} \, f(x_{i})
  \end{equation}
  holds. It is \emph{locally strongly concave} if there is an open
  covering $U=\bigcup_{i} U_{i}$ such that $f|_{U_{i}}$ is strongly
  concave for every $i$.

  The function $f$ is \emph{concave} if the inequality \eqref{eq:5} is
  satisfied for all polyhedral convex combinations in $U$.

  If $X$ is a Euclidean polyhedral space of pure dimension and $U$ is
  balanced, then $f$ is \emph{weakly concave} when this inequality is
  satisfied for all balanced convex combinations in $U$.
\end{defn}

The next result shows that Definition \ref{def:1} generalizes the
notions of concavity for piecewise affine functions from
Definitions~\ref{def:6} and \ref{def:3}.

\begin{prop}\label{prop:17} Let  $f$ be a piecewise affine function on
  $U$. Then $f$ is strongly concave in the sense of Definition
  \ref{def:6} (respectively concave, weakly concave in the
  sense of Definition \ref{def:3}) if and only if it is strongly
  concave (respectively concave, weakly concave) in the sense
  of Definition \ref{def:1}.
\end{prop}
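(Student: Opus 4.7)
The plan is to verify the three equivalences separately, exploiting a common computational identity: for a Minkowski weight $c$ supported on facet-neighbors $\sigma$ of a polyhedron $\tau$ and with $c(\sigma)$ proportional to the distance of a chosen point in $\sigma$ from $\tau$, one has
\[
(f\cdot c)(\tau)=f(x)-\sum_i\nu_i f(x_i),
\]
which links the intersection-product positivity of Definition~\ref{def:3} with the convex-combination inequality of Definition~\ref{def:1}.

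For the strongly concave equivalence, the forward direction is immediate: if $f=(f'\circ\iota)|_U$ with $f'$ concave on $H_X$, then $\iota(x)=\sum\nu_i\iota(x_i)$ together with concavity of $f'$ yields $f(x)\ge\sum\nu_i f(x_i)$. For the converse, I would define the concave envelope
\[
f'(y)=\sup\Bigl\{\sum_i\nu_i f(x_i)\,:\,x_i\in U,\ \nu_i\ge 0,\ \sum_i\nu_i=1,\ \sum_i\nu_i\iota(x_i)=y\Bigr\},
\]
with the convention $\sup\emptyset=-\infty$. The strong concavity condition of Definition~\ref{def:1} forces $f'(\iota(x))=f(x)$ for $x\in U$, and concavity of $f'$ on $H_X$ follows by concatenating two convex combinations realizing $y_1$ and $y_2$ into one realizing $\lambda y_1+(1-\lambda)y_2$.

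For the concave equivalence, the direction from Definition~\ref{def:3} back to Definition~\ref{def:1} is the subtle one and constitutes the main obstacle of the proposition. The easier direction uses Proposition~\ref{prop:13}: given a polyhedral complex $\Pi$ on which $f$ is defined, an open $V\subset U$, a positive weight $c\in M_k^+(\Pi|_V)$, and $\tau\in\Pi|_V(k-1)$, I pick $x\in\relint(\tau)\cap V$, a small $\varepsilon>0$, and for each $\sigma\in\Pi|_V(k)$ with $\sigma\succ\tau$ and $c(\sigma)>0$ let $x_\sigma\in\sigma$ be the unique point (using injectivity of $\iota$ on $\sigma$) with $\iota(x_\sigma)=\iota(x)+\varepsilon v_{\sigma\setminus\tau}$, and set $\nu_\sigma=c(\sigma)/S$ with $S=\sum c(\sigma)$. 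The Minkowski balancing $\sum c(\sigma)v_{\sigma\setminus\tau}=0$ gives $\sum\nu_\sigma\iota(x_\sigma)=\iota(x)$, hence a polyhedral convex combination, and Definition~\ref{def:1} together with affineness of $f$ on each $\sigma$ yields $(f\cdot c)(\tau)=(S/\varepsilon)(f(x)-\sum_\sigma\nu_\sigma f(x_\sigma))\ge 0$. For the other direction, the obstruction is that the $\sigma_i$ in a polyhedral convex combination need not be facet-neighbors of $\tau$; I would handle this by refining $\Pi$ so that $f$ is defined on it and then decomposing the normal component of $\iota(x_i)-\iota(x)$ (the part orthogonal to $\iota(H_\tau)$) as a nonnegative combination of unit normals $v_{\rho\setminus\tau}$ for facet-neighbors $\rho\succ\tau$ contained in $\sigma_i$, replacing the convex combination by an equivalent one supported on facet-neighbors of $\tau$. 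In that reduced setting one defines $c(\sigma_i)=\nu_i\, d(x_i,\tau)$ on a small open $V$ around $x$ chosen so that $\Pi|_V(k-1)=\{\tau\}$; the splitting of $\sum\nu_i\iota(x_i)=\iota(x)$ into tangential and normal parts shows that $c$ is a positive Minkowski weight, and the displayed identity together with Definition~\ref{def:3} concludes.

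The weakly concave equivalence runs along the same lines but through balanced convex combinations and $\beta_U$-positive weights as in Definition~\ref{def:8}. For Definition~\ref{def:1}~$\Rightarrow$~Definition~\ref{def:3}, I build at each $\tau\in\Pi|_U(n-1)$ a balanced convex combination from a weight $b$ representing $\beta_U$ by taking $\nu_\sigma\propto b(\sigma)$ and $x_\sigma$ as above; the $\beta_U$-positivity of $(\varepsilon/S)b$ is automatic since $\beta_U$ is itself $\beta_U$-positive, so Definition~\ref{def:1} applies and the identity gives $(f\cdot b)(\tau)\ge 0$. For the converse, given a balanced convex combination with its associated $\beta_U$-positive weight $c$, Proposition~\ref{prop:15} yields that $f\cdot c$ is $\beta_U$-positive and hence positive as a weight, and the same facet-case computation gives $(f\cdot c)(\tau)=f(x)-\sum_i\nu_i f(x_i)\ge 0$.
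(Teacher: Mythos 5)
Your treatment of the strongly concave equivalence and of the two ``easier'' directions (Definition~\ref{def:1}~$\Rightarrow$~Definition~\ref{def:3}, for both concave and weakly concave) is correct and matches the paper. The central issue lies in the two harder directions, Definition~\ref{def:3}~$\Rightarrow$~Definition~\ref{def:1}, where you build your argument on the ``common computational identity'' $(f\cdot c)(\tau)=f(x)-\sum_i\nu_i f(x_i)$. This identity requires $f$ to be \emph{affine} on each polyhedron $\sigma_i$ carrying a point $x_i$, because $f_{\sigma_i}$ must encode the value of $f$ at $x_i$. But a polyhedral (or balanced) convex combination comes with its own complex $\Pi$, and a piecewise affine $f$ on $U$ is generally only affine on a finer complex $\Pi'$; on $\Pi$ the function can bend inside each $\sigma_i$. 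Your remark ``refining $\Pi$ so that $f$ is defined on it'' does not resolve this: after refinement, $\tau$ and the $\sigma_i$ are subdivided and cease to be polyhedra of the working complex, so the identity no longer applies to the original combination.

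The paper navigates this by two devices you do not invoke. For the concave case it refines so that $x$ becomes a vertex and replaces each $x_i$ by the nearby point $y_i=x+\varepsilon(x_i-x)$ on a $1$-dimensional ray $\rho_i$ of the refined complex; the product with the $1$-dimensional weight $c(\rho)=\sum_{\rho_i=\rho}\nu_i\|x_i-x\|$ then yields $f(x)\ge\sum\nu_i f(y_i)$, and Corollary~\ref{cor:1} (concavity of $f|_{\sigma_i}$) lifts this back to the $x_i$ via $f(y_i)\ge(1-\varepsilon)f(x)+\varepsilon f(x_i)$. For the weakly concave case it introduces the auxiliary function $g$, affine on each $\sigma_i$ and agreeing with $f$ on a small neighborhood of $\tau$; then $(f\cdot c)(\tau')=g(x)-\sum\nu_i g(x_i)\ge 0$ and $g\ge f$ on each $\sigma_i$ (again Corollary~\ref{cor:1}) gives the desired inequality. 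In your write-up both of these steps are absent, and they are precisely what makes those two implications nontrivial. Your idea of decomposing the normal component of $\iota(x_i)-\iota(x)$ into a nonnegative combination of facet normals is a viable alternative reduction to the paper's ``star at $x$'' subdivision, but it too requires $f$ to be affine on the $\sigma_i$, so it does not circumvent the issue.
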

\begin{proof}
  We prove separately each implication.
  
\smallskip
\noindent\emph{Strong concavity in the sense of Definition \ref{def:6}
    implies strong concavity in the sense of Definition \ref{def:1}:}

  Let $f$ be strongly concave in the sense of Definition
  \ref{def:6}. Then there is a function
  \begin{displaymath}
f' \colon N\longrightarrow \R\cup\{\pm\infty\}     
  \end{displaymath}
  that is concave in the sense of Definition \ref{def:7} and such that
  $f=f'\circ \iota|_{U}$.  Since $f$ takes only finite values, then
  $f'$ also takes only finite values on $\iota(U)$.
Given a convex combination
  $(x,(x_i)_{i \in I}, (\nu_i)_{i \in I})$ in $U$, the concavity of
  $f'$ and this finiteness condition imply that
\begin{displaymath}
f(x) = (f' \circ \iota)(x) = f' \Big(\sum_{i \in I}
  \nu_i \, 
  \iota (x_i)\Big) \geq \sum_{i \in I} \nu_i 
f'(\iota(x_i)) = \sum_{i \in I}\nu_i \, f(x_i),
\end{displaymath}
and so $f$ is strongly concave in the sense of Definition \ref{def:1}.
  
\smallskip
\noindent
\emph{Strong concavity in the sense of Definition \ref{def:1} implies
  strong concavity in the sense of Definition \ref{def:6}:}

Let now
$f$ be strongly concave in the sense of Definition \ref{def:1}.  Let
$g\colon \iota(U) \to \R $ be the function defined, for
$y\in \iota(U) $, by
\begin{displaymath}
  g(y)= f(x)  
\end{displaymath}
for any $x\in U$ with $\iota(x)=y$.  It is well-defined, because for
$x_1, x_2\in U$ such that $\iota(x_1) = \iota(x_2)$ we have that
$(x_1, (x_2), (1))$ is a convex combination in $U$ and so
$f(x_1) \geq f(x_2)$. Similarly $f(x_2) \geq f(x_1)$, and so
$f(x_1) = f(x_2)$.

Consider the function $f' \colon H\rightarrow \R\cup\{\pm\infty\} $
defined, for $y\in H$, by
\begin{equation}
  \label{eq:47}
f'(y)= \sup  \sum_{i\in I} \nu_i \, g(y_i),  
\end{equation}
the supremum being over the collections of points $ (y_i)_{i\in I}$ in
$\iota(U)$ and of nonnegative real numbers $(\nu_i)_{i \in I}$ with
$ \sum_{i\in I} \nu_{i}=1 $ and $ \sum_{i\in I} \nu_{i}\, y_{i}=y$. In
particular, if $y$ does not belong to the convex hull of
$\iota _{X}(U)$ then $f'(y)=-\infty$.

By construction, the hypograph of $f'$ coincides with the convex hull
of the subset $\{(y,z) \mid y\in \iota(U), z\le g(y)\}$ of
$H\times \R$. In particular, this hypograph is a convex set and so
$f'$ is concave in the the sense of Definition \ref{def:7}.

We claim that $f = f' \circ \iota|_{U}$. To prove this, let $x\in U$
and set $y = \iota(x)$. On the one hand, let $ (y_i)_{i\in I}$ and
$(\nu_i)_{i \in I}$ as in \eqref{eq:47} and write $y_i = \iota(x_i)$
with $x_i \in U$,
$i\in I$. Then $(x, (x_i)_{i\in I}, (\nu_i)_{i\in I})$ is a convex
combination in $U$ and so
\begin{displaymath}
  f(x) \geq \sum_{i\in I} \nu_i \, 
  f(x_i)= \sum_{i\in I}\nu_i\, 
g(y_i),
\end{displaymath}
which implies that $f'(y)\le f(x)$. 
On the other hand, 
choosing the collections
$ (y)$ and $ (1)$ in the definition of $f'$ in \eqref{eq:47}, we
deduce that
\begin{displaymath}
  f' (y)\ge f(x)
\end{displaymath}
and so $f'(y) = f(x)$. Hence $f = f' \circ \iota|_{U}$ and so $f$ is
strongly concave in the sense of Definition \ref{def:6}.

\smallskip
\noindent
\emph{Concavity in the sense of Definition \ref{def:3}
  implies concavity in the sense of Definition~\ref{def:1}:}

Now suppose that $f$ is concave in the sense of Definition \ref{def:3}
and consider a polyhedral convex combination
$(x, (x_{i})_{i\in I}, (\nu _{i\in I}))$ in $U$. By definition, there
is a polyhedral complex $\Pi $ on $X$ and polyhedra $\tau\in \Pi$ with
$x\in \tau $ and $\sigma _{i}\in \Pi$, $i\in I$, such that
$\sigma_{i}\succ \tau$, $\sigma _{i}\subset U$ and
$x_{i}\in \sigma _{i}$.

Choose a subdivision $\Pi'$ of $\Pi$ where $f $ is defined and such
that $x$ is a vertex of $\Pi'|_{U}$ and, for each $i\in I$, there is a
segment $\rho _{i}$ in $\Pi'|_{U}$ having $x$ as a vertex and
pointing in the direction of $x_{i}$ from $x$. Choose also
$\varepsilon >0$ such that the point
\begin{displaymath}
  y_{i} = x + \varepsilon  \, (x_{i}-x)
\end{displaymath}
lies in $ \rho _{i}$.  Let $V \subset U$ be an open neighborhood of
$x$ such that $\Pi'|_V(0) = \{x\}$ and $c$ the positive
$1$-dimensional weight on $\Pi'|_V$ given, for
  $\rho\in \Pi'|_{V}$, by
\begin{displaymath}
  c(\rho)=\sum_{\rho_{i}=\rho }\nu _{i} \, \|x_{i}-x\|,
\end{displaymath}
the sum being over the indexes $i\in I$ such that $ \rho_{i}=\rho$. 
We have that
\begin{displaymath}
  \sum_{\rho \succ \{x\}} c(\rho) \, v_{\rho \setminus \{x\}} = \sum_{i\in I}
  \nu _{i} \, \|x_{i}-x\| \, v_{\rho_{i} \setminus \{x\}} = \sum_{i\in I} \nu_{i}
  \, (x_{i}-x) =0
\end{displaymath}
and so $c \in M_1^{+}(\Pi'|_V)$. By Proposition \ref{prop:13},
$(f\cdot c)\left(\{x\}\right)\ge 0$, and so the formula for the
product in \eqref{eq:2} implies that
\begin{equation}\label{eq:13}
  f(x)\ge \sum_{i\in I}\nu _{i}f(y_{i}).
\end{equation}
By Corollary \ref{cor:1}, for each $i\in I$ the piecewise affine
  function $f$ is convex on $\sigma_{i}$ in the usual sense and so
  $f(y_{i})\ge (1-\varepsilon) \, f(x)+\varepsilon\, f(x_{i})$.
With \eqref{eq:13}, this implies that
$ f(x)\ge \sum_{i\in I}\nu _{i} \, f(x_{i})$, proving that $f$ is concave
in the sense of Definition \ref{def:1}.
  
\smallskip
\noindent
\emph{Concavity in the sense of Definition \ref{def:1}
  implies concavity in the sense of Definition~\ref{def:3}:}

Suppose that $f$ is concave in the sense of Definition
\ref{def:1}. Let $\Pi $ be a polyhedral complex on $X$ where $f$ is
defined, $V \subset U$ an open subset, $k \in \Z_{\ge 0}$ and
$c \in M_k^{+}(\Pi|_V)$.

For each $\tau \in \Pi|_V(k-1)$ let $\sigma _{i}$, $i\in I$, be the
polyhedra of $\Pi|_{V} (k)$ having $\tau $ as a face and set
  \begin{displaymath}
     S=\sum_{i\in I}c(\sigma_{i} ).
  \end{displaymath}
  If $S=0$ then $c(\sigma_{i})=0$ for all $i\in I$ because $c$ is
  positive, and therefore $(f\cdot c)(\tau)=0$ in this case.  Else
  $S>0$, and then we choose $x\in \relint(\tau )$ and $\varepsilon >0$
  so that for each $i\in I$,  the point
\begin{displaymath}
x_{i}'\coloneqq  \iota(x)+\varepsilon \, v_{\sigma _{i}\setminus \tau } 
\end{displaymath}
lies in $\iota(\sigma _{i}\cap V) $. Set 
$x_{i}$ for the unique point in $\sigma _{i}\cap V$ such that
$\iota(x_{i})=x_{i}'$.  Then
$(x,(x_{i})_{i\in I},(c(\sigma _{i})/S)_{i\in I})$ is a polyhedral
convex combination in $V$ and so
$ f(x)\ge \sum_{i=1}^{r}\frac{c(\sigma _{i})}{S}f(x_{i})$. The
formula in \eqref{eq:2} then implies that
  \begin{displaymath}
    (f\cdot c)(\tau )=\frac{S}{\varepsilon } 
    \left( f(x)-
      \sum_{i\in I}
      \frac{c(\sigma_{i} )}{S} f(x_{i})\right)\ge 0.
  \end{displaymath}
  We deduce that $f\cdot c \in M_{k-1}^{+}(\Pi|_{V})$ and so
  Proposition \ref{prop:13} implies that $f$ is concave in the sense
  of Definition \ref{def:3}.
  
\smallskip
\noindent
\emph{Weak concavity in the sense of Definition \ref{def:3}
  implies weak concavity in the sense of Definition \ref{def:1}}

Suppose now that $f$ is weakly concave in the sense of Definition
\ref{def:3} and let $(x, (x_i)_{i \in I}, (\nu_i)_{i \in I} )$ be a
balanced convex combination as in Definition \ref{def:5}.
With notation as in that latter definition, let $\Pi '$ be a
subdivision of $\Pi $ where $f$ is defined and choose a polyhedron
$\tau '\in \Pi'(k-1)$ with $x\in \tau '\subset \tau $. For each
$i\in I$ there is a unique $\sigma _{i}'\in \Pi'(k )$ with
$\tau '\prec \sigma '_{i}$ and $\sigma _{i}'\subset \sigma _{i}$.
Consider the subset $K=\bigcup_{i} \sigma _{i}$ and let
$g\colon K\to \R$ be the unique function such that the restriction
$g|_{\sigma _{i}}$ is affine and agrees with $f$ in the smaller
polyhedra $\sigma'_{i}$, that is
  \begin{displaymath}
    g|_{\sigma '_{i}}=f|_{\sigma '_i}
  \end{displaymath}
  for all $i$.  Since $x\in \tau '\subset \sigma '_i$ for all $i$,
  we have that $ g(x)=f(x)$.  By Corollary \ref{cor:1} we have that
  $f$ is concave on $\sigma_{i}$ in the usual sense, and so
  $ g(y)\ge f(y)$ for all $y\in K$. Moreover
  \begin{equation*}
    g(x)-\sum_{i\in I} \nu_{i}\, g(x_{i}) = (f\cdot c)(\tau ')\ge 0. 
  \end{equation*}
  We deduce that
$f(x)=g(x) \ge \sum_{i\in I} \nu_{i}\,g(x_{i}) \ge \sum_{i\in I}
\nu_{i}\,f(x_{i})$, and so $f$ is weakly concave in the sense of
Definition \ref{def:1}.

\smallskip
\noindent
\emph{Weak concavity in the sense of Definition \ref{def:1}
  implies weak concavity in the sense of Definition \ref{def:3}:}

Finally assume that $f$ is weakly concave in the sense of
  Definition \ref{def:1}. Let $\Pi$ be a polyhedral complex on $X$
  where $f$ is defined and denote by $b_{U}$ the balancing condition
  on $\Pi|_{U}$.

  For each $\tau \in \Pi|_U(n-1)$ let $\sigma_i$, $i\in I$, be the
  maximal polyhedra of $\Pi$ having $\tau $ as a facet and set
  $S= \sum_{i \in I} b_{U}(\sigma_i)$.  Choose $x \in \relint(\tau)$ and
  $\varepsilon > 0$ so that, for each $i\in I$, the point
\begin{displaymath}
x_i' \coloneqq \iota(x) + \varepsilon \, v_{\sigma_i \setminus \tau} 
\end{displaymath}
lies in $ \iota(\sigma_i\cap U) $. Set $x_i$ for the only point in
$\sigma_i\cap U$ such that $\iota(x_i) = x_i'$. Then
$(x, (x_i)_{i\in I} , (b_{U}(\sigma_i)/S)_{i\in I})$ is a balanced convex
combination, which gives the inequality
$f(x) \geq \sum_{i=1}^r \frac{b_{U}(\sigma_i)}{S}f(x_i)$. Hence
\begin{displaymath}
(f \cdot b_{U})(\tau) = \frac{S}{\varepsilon}\left(f(x) - \sum_{i =
    1}^r \frac{b_{U}(\sigma_i)}{S}f(x_i)\right) \geq 0 
\end{displaymath}
and so $f$ is weakly concave in the sense of Definition \ref{def:3}.
\end{proof}

The next result extends Corollary \ref{cor:1} to concave functions
that are not necessarily piecewise affine. 

\begin{prop}\label{prop:1}
  Let $X$ be a Euclidean polyhedral space of pure dimension, $U$ a
  balanced open subset of $X$, and $f$ a weakly concave function on
  $U $. Let $\Pi $ be a polyhedral complex on $X$ and
  $\sigma \in \Pi $ a polyhedron contained in $U$. Then the
  restriction $f|_{\sigma}$ is concave in the usual sense.
\end{prop}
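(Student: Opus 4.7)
The plan is to derive the concavity inequality $f(y)\ge \nu_{1}f(y_{1})+\nu_{2}f(y_{2})$ for every $y_{1},y_{2}\in \sigma$ and every $\nu_{1},\nu_{2}\ge 0$ with $\nu_{1}+\nu_{2}=1$ and $y:=\nu_{1}y_{1}+\nu_{2}y_{2}\in \sigma$ by exhibiting an explicit balanced convex combination $(y,(y_{1},y_{2}),(\nu_{1},\nu_{2}))$ in $U$ and applying the weak concavity of $f$. I would first reduce to the case $\dim \sigma =n$: every polyhedron of $\Pi$ is a face of some maximal polyhedron by purity, and after passing to a suitable subdivision of $\Pi$ one may arrange that $\sigma$ is contained in a maximal polyhedron which itself lies in $U$ (possible because $\sigma \subset U$ and $U$ is open). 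Concavity on such a maximal polyhedron then restricts to the convex subset $\sigma$. The subcases $\nu_{1}=0$, $\nu_{2}=0$ and $y\in\{y_{1},y_{2}\}$ are immediate, so I may assume $\nu_{1},\nu_{2}>0$ and $y\notin \{y_{1},y_{2}\}$.

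Now with $\sigma$ of maximal dimension, identify $\sigma ,y,y_{1},y_{2}$ with their images under $\iota$ in $N_{X}$, and choose an affine hyperplane $H$ of the affine span of $\iota(\sigma)$ through $\iota(y)$ such that (a) $\iota(y_{1})$ and $\iota(y_{2})$ lie strictly on opposite sides of $H$, and (b) $H\cap \iota(\relint(\sigma))\ne \emptyset$. When $y\in \relint(\sigma)$ this is automatic, taking $H$ perpendicular to $\iota(y_{2})-\iota(y_{1})$; when $y\in \partial \sigma$, the full-dimensionality of the tangent cone $T_{y}\sigma$ permits $H$ to be chosen to contain both a separating direction and a direction from the interior of $T_{y}\sigma$. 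Set $\sigma_{1}=\iota^{-1}(H_{+})\cap \sigma$, $\sigma_{2}=\iota^{-1}(H_{-})\cap \sigma$ and $\tau=\iota^{-1}(H)\cap \sigma$. By the standard relative-interior formula for intersections, $\relint(\tau)=H\cap \relint(\iota(\sigma))\subset \relint(\iota(\sigma))$, so $\tau$ has dimension $n-1$ and its relative interior meets no other maximal polyhedron of $\Pi$. Refining $\Pi$ to a polyhedral complex $\Pi'$ on $X$ whose restriction to $\sigma$ is $\{\sigma_{1},\sigma_{2},\tau\}$ together with all their faces, extended compatibly outside $\sigma$, the polyhedra $\sigma_{1}$ and $\sigma_{2}$ become the unique maximal polyhedra of $\Pi'$ having $\tau$ as a facet, and both are contained in $\sigma \subset U$.

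Take the Minkowski cycle $c=\alpha \,\beta_{U}$ on $\Pi'|_{U}$ for some scalar $\alpha >0$; this is $\beta_{U}$-positive by Definition \ref{def:8} (the case $n-k=0$), and the pullback formula of Proposition \ref{prop:12} gives $c(\sigma_{1})=c(\sigma_{2})=\alpha \,b_{U}(\sigma)$. Since $\iota(y)\in H$ lies on the segment from $\iota(y_{1})$ to $\iota(y_{2})$, an elementary signed-distance computation gives $\nu_{1}\,d(y_{1},\tau)=\nu_{2}\,d(y_{2},\tau)$, so choosing $\alpha$ with $\alpha \,b_{U}(\sigma)$ equal to this common value satisfies the weight condition $\nu_{i}\,d(y_{i},\tau)=c(\sigma_{i})$ of Definition \ref{def:5}. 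Together with $\nu_{1}+\nu_{2}=1$ and $\iota(y)=\sum\nu_{i}\,\iota(y_{i})$, this exhibits $(y,(y_{1},y_{2}),(\nu_{1},\nu_{2}))$ as a balanced convex combination in $U$ with respect to $\Pi'$ and $c$, and weak concavity of $f$ yields the desired inequality. The main obstacle is the case $y\in \partial \sigma$: one must select $H$ via the tangent cone with sufficient care so that $\relint(\tau)\subset \relint(\sigma)$ is preserved, and the refined polyhedral complex $\Pi'$ on the whole of $X$ must be built compatibly with the subdivision of $\sigma$ induced by $H$ without introducing extra maximal polyhedra adjacent to $\tau$; both tasks are handled by standard local subdivision constructions.
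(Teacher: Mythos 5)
Your proof follows essentially the same approach as the paper: reduce to $\dim\sigma=n$, cut $\sigma$ by the hyperplane through $\iota(y)$ perpendicular to $\iota(y_2)-\iota(y_1)$, take a positive scalar multiple of the pulled-back balancing condition as the $\beta_U$-positive Minkowski weight, and apply weak concavity to the resulting balanced convex combination; the paper realizes the global refinement canonically as $S(\Pi,g)$ for the strongly concave piecewise affine function $g=g'\circ\iota_X$ with $g'(u)=\min(0,\langle\iota(y)-\iota(y_2),u-\iota(y)\rangle)$, and leaves the rescaling of $b_U$ implicit where you make the scalar $\alpha$ explicit. Two minor remarks: your separate treatment of the case $y\in\partial\sigma$ is superfluous, because a hyperplane strictly separating two points of the convex set $\iota(\sigma)$ automatically meets $\relint(\iota(\sigma))$, so $\relint(\tau)\subset\relint(\sigma)$ holds in all cases; and your reduction to $\dim\sigma=n$ is a reasonable gloss on the paper's bare ``without loss of generality'', though both elide the construction of a full-dimensional polyhedron of some subdivision that contains $\sigma$ and is itself contained in $U$, which merits a word of justification (especially when $\sigma$ is unbounded).
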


\begin{proof} We assume without loss of generality that $\sigma $ is
  of maximal dimension. Let $x,y\in \sigma $ be different points and
  $0<t<1$ a real number, and set $ z=t\,x+(1-t)\,y$.  Since $\iota $ is
  injective on $\sigma$, we have that $\iota (x)\not = \iota(y)$.

  Consider the function $g'\colon H\to \R$ defined by
  \begin{displaymath}
    g'(u)=\min (0, \langle \iota(z)-\iota(y), u-\iota(z)\rangle)
  \end{displaymath}
  and set $g=g'\circ \iota $. This is a strongly concave piecewise
  affine function that takes the value $0$ at $x$ and at $z$, and is
  strictly negative at $y$. Let $S(\Pi,g)$ be the subdivision of $\Pi$
  associated to $g$ as in \eqref{eq:46} and denote by $b_{U}$ the
  pullback to this subdivision of the balancing condition on $U$. In
  this subdivision, $\sigma $ gets broken into two polyhedra of
  maximal dimension $\sigma _{1}$ and $\sigma _{2}$ such that
  $x\in \sigma _{1}$, $y\in \sigma _{2}$ and
  $z \in \sigma _{1}\cap \sigma _{2}$. Moreover
  $b_{U}(\sigma _{1})=b_{U}(\sigma _{2})=b_{U}(\sigma ) >0$.
  
  Since $    t \, d(x,z)=t\, (1-t)\, d(x,y)=(1-t)\, d(y,z)$ 
  and the points $x,z,y$ are aligned, 
  the  triple $(z,(x,y),(t,1-t))$ is a balanced  convex
  linear combination in $U$. Since $f$ is weakly concave on $U$,  
  \begin{displaymath}
    f(z)\ge tf(x)+(1-t)f(y)
  \end{displaymath}
  and so $f|_{\sigma }$ is concave, as stated.
\end{proof}

As in the classical case, the three notions of concavity are stable
under sums, product by a positive number and taking infimum.  The next
proposition follows directly from the definitions.

\begin{prop}\label{prop:9}
  Let $f_{1}$ and $f_{2}$ be strongly concave (respectively concave,
  weakly concave) functions on $U$ and $\alpha \in \R_{\ge 0}$. Then
   $ \alpha f_{1}$ and $ f_{1}+f_{2} $ are also strongly
  concave (respectively concave, weakly concave) functions on $U$.
\end{prop}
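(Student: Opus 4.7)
The plan is to observe that each of the three notions in Definition \ref{def:1} is defined by requiring the single inequality \eqref{eq:5} to hold over a specified class of convex combinations in $U$, respectively: all convex combinations (strongly concave), all polyhedral convex combinations (concave), or all balanced convex combinations (weakly concave). Crucially, these classes depend only on $U$ and, in the balanced case, on the balancing condition $\beta_U$; they do not depend on the function being tested. Hence the proof reduces to verifying that the inequality \eqref{eq:5} is preserved under nonnegative scaling and under addition, and then quantifying over the appropriate class of convex combinations in each case.

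First I would fix one of the three notions and an arbitrary convex combination $(x, (x_i)_{i\in I}, (\nu_i)_{i\in I})$ in the corresponding class. If $f_1$ satisfies
\begin{equation*}
f_1(x) \ge \sum_{i\in I} \nu_i \, f_1(x_i),
\end{equation*}
and $\alpha \in \R_{\ge 0}$, then multiplying both sides by $\alpha$ preserves the direction of the inequality and yields $(\alpha f_1)(x) \ge \sum_{i\in I} \nu_i \, (\alpha f_1)(x_i)$. Since this holds for every convex combination in the selected class, $\alpha f_1$ satisfies the same notion of concavity as $f_1$.

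Next, assuming both $f_1$ and $f_2$ satisfy the inequality \eqref{eq:5} for the same convex combination, I would add the two inequalities termwise to obtain
\begin{equation*}
(f_1 + f_2)(x) \ge \sum_{i\in I} \nu_i \, (f_1 + f_2)(x_i).
\end{equation*}
Again, since the class of convex combinations being tested is the same for both summands, the sum $f_1 + f_2$ belongs to the chosen class of concave functions.

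There is essentially no obstacle here: the argument is a direct verification from the definitions, which is why the authors state the proposition as following immediately. The one subtlety worth flagging is that in the strongly concave case one could instead invoke Definition \ref{def:6} (pullback of a concave function on $N$); but the equivalence in Proposition \ref{prop:17} lets us work uniformly with Definition \ref{def:1}, making the scaling and addition arguments identical in all three cases.
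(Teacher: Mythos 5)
Your proof is correct and is exactly the direct verification from Definition~\ref{def:1} that the paper has in mind: the paper gives no proof and simply states the result ``follows directly from the definitions.'' The key observation you make---that the class of convex combinations being tested depends only on $U$ (and $\beta_U$) and not on the function, so \eqref{eq:5} is preserved under nonnegative scaling and termwise addition---is precisely the intended argument.
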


\begin{prop}\label{prop:23}
  Let $\{f_{\lambda }\}_{\lambda \in \Lambda }$ be a family  of
  strongly concave (respectively concave, weakly concave) functions on
  $U$. If $\inf_{\lambda \in \Lambda } f_{\lambda }(x)>-\infty$ for
  all $x\in U$, then $ \inf_{\lambda \in \Lambda } f_{\lambda }$ is
  also a strongly concave (respectively concave, weakly concave)
  function on $U$.
\end{prop}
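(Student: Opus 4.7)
Set $f=\inf_{\lambda \in \Lambda } f_{\lambda }$. The hypothesis
  $\inf_{\lambda \in \Lambda } f_{\lambda }(x)>-\infty$ together with
  the fact that each $f_{\lambda }$ is real-valued implies that $f$ is
  a real-valued function on $U$, so the notions of concavity of
  Definition \ref{def:1} apply to it.

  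The plan is to verify, in each of the three cases, the defining
  inequality \eqref{eq:5} for $f$ by reducing it to the corresponding
  inequality for each $f_{\lambda }$. Let
  \begin{displaymath}
    (x,(x_{i})_{i\in I},(\nu _{i})_{i\in I})
  \end{displaymath}
  be a convex combination in $U$ of the appropriate type (arbitrary,
  polyhedral, or balanced, according to which of the three cases we
  are considering). Since $\nu _{i}\ge 0$ and
  $f_{\lambda }(x_{i}) \ge f(x_{i})$ for every $i\in I$ and every
  $\lambda \in \Lambda $, we have
  \begin{displaymath}
    f_{\lambda }(x) \ge \sum_{i\in I}\nu _{i}\, f_{\lambda }(x_{i})
    \ge \sum_{i\in I}\nu _{i}\, f(x_{i}),
  \end{displaymath}
  where the first inequality is the corresponding concavity condition
  for $f_{\lambda }$. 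Taking the infimum over $\lambda \in \Lambda $
  on the left-hand side gives
  $f(x)\ge \sum_{i\in I}\nu _{i}\, f(x_{i})$, as required.
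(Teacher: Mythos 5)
Your proof is correct and follows essentially the same approach as the paper: in both cases one applies the concavity inequality to each $f_{\lambda}$ for the given convex combination and then passes to the infimum over $\lambda$. The only cosmetic difference is that the paper takes the infimum on both sides and then exchanges $\inf$ and $\sum$, whereas you first replace each $f_{\lambda}(x_{i})$ by $f(x_{i})$ and then take the infimum on the left; these are trivially equivalent.
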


\begin{proof}
  Set for short $f = \inf_{\lambda \in \Lambda} f_{\lambda}$. Suppose
  that each function $f_{\lambda }$ is strongly concave and consider a
  convex combination
  $\left(x, (x_i)_{i \in I}, (\nu_i)_{i \in I} \right)$ in $U$. We
  have that
\begin{displaymath}
f_{\lambda} (x) \geq \sum_{i \in I}\nu_if_{\lambda}(x_i)
\end{displaymath}
and so 
\begin{displaymath}
f(x) = \inf_{\lambda \in \Lambda} f_{\lambda}(x) \geq \inf_{\lambda
  \in \Lambda}\sum_{i \in I}\nu_if_\lambda(x_i) \geq \sum_{i \in
  I}\nu_i \inf_{\lambda \in \Lambda}f_{\lambda}(x_i) = \sum_{i \in
  I}\nu_if(x_i),
\end{displaymath}
which shows that $f$ is strongly concave. The proofs for concave and
weakly concave functions are done in a similar way.
\end{proof}

We next show that the conditions of being concave and
weakly concave are local.

\begin{prop}\label{prop:20}
  Let $f\colon U\to \R$ be a function and $U=\bigcup_{\lambda}U_{\lambda }$ an
  open covering. Then $f$ is concave (respectively weakly
  concave) if and only if
  $f|_{U_{\lambda }}$ is concave (respectively weakly concave) for all
  $\lambda $. 
\end{prop}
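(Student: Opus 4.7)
The plan is to treat both equivalences in parallel and in two steps, the easier forward implication and the substantive backward one. For the forward implication, any polyhedral (respectively balanced) convex combination $(x,(x_{i})_{i\in I},(\nu _{i})_{i\in I})$ in some $U_{\lambda }$, with polyhedral complex $\Pi $ and polyhedra $\tau ,\sigma _{i}$, has all $\sigma _{i}\subset U_{\lambda }\subset U$ and is already polyhedral in $U$; in the balanced case the witness $c=\sum _{j}\alpha _{j}f_{1,j}\cdots f_{n-k,j}\cdot b_{U_{\lambda }}$ lifts to a $\beta _{U}$-positive Minkowski weight on $\Pi |_{U}$ after extending the concave piecewise affine $f_{l,j}$ from $U_{\lambda }$ to $U$ (using a subdivision and the compatibility of products with restriction and pullback in Proposition \ref{prop:12}). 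The defining concavity inequality for $f$ on $U$ then transfers directly to $f|_{U_{\lambda }}$.

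For the backward implication, let $(x,(x_{i})_{i\in I},(\nu _{i})_{i\in I})$ be a polyhedral (respectively balanced) combination in $U$ with complex $\Pi $, polyhedra $\tau ,\sigma _{i}$ and (in the balanced case) witness $c$. Choose $\lambda $ with $x\in U_{\lambda }$, and for $t\in (0,1]$ small enough let $y_{i}(t)\in \sigma _{i}$ be the unique point with $\iota (y_{i}(t))=(1-t)\iota (x)+t\iota (x_{i})$; openness of $U_{\lambda }$ together with $x\in \sigma _{i}$ ensures that every $y_{i}(t)$ lies in $U_{\lambda }$. Subdivide $\Pi $ to a polyhedral complex $\Pi '$ by successively cutting with the affine hyperplanes bounding a small convex polytope $P\subset N_{X}$ with $\iota (x)\in \relint (P)$ and $\iota ^{-1}(P)\cap \sigma _{i}\subset U_{\lambda }$ for every $i$. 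For $P$ and $t$ chosen compatibly one obtains a polyhedron $\tau '\in \Pi '$ with $x\in \tau '\subset \tau $ and $\dim (\tau ')=\dim (\tau )$, and, for each $i$, a unique $\sigma _{i}'\in \Pi '$ with $\tau '\prec \sigma _{i}'$, $y_{i}(t)\in \sigma _{i}'\subset \sigma _{i}\cap U_{\lambda }$, such that the $\sigma _{i}'$ exhaust the $k$-dimensional polyhedra of $\Pi '$ having $\tau '$ as a face. Thus $(x,(y_{i}(t))_{i\in I},(\nu _{i})_{i\in I})$ is a polyhedral combination in $U_{\lambda }$; in the balanced case the pullback-restriction $t\cdot c_{\Pi '}|_{U_{\lambda }}$ is $\beta _{U_{\lambda }}$-positive Minkowski and satisfies $t\cdot c_{\Pi '}|_{U_{\lambda }}(\sigma _{i}')=t\,c(\sigma _{i})=\nu _{i}\,d(y_{i}(t),\tau ')$, which witnesses balance. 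The hypothesis gives $f(x)\ge \sum _{i}\nu _{i}f(y_{i}(t))$.

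To replace the $y_{i}(t)$ by the $x_{i}$, I would show that $f|_{\sigma _{i}}$ is concave in the usual sense. In the weakly concave case this follows from Proposition \ref{prop:1} applied within each $U_{\lambda '}$ meeting $\sigma _{i}$, combined with the classical fact that a function on a convex set that is concave in the usual sense on each member of an open cover by convex neighborhoods is globally concave. In the concave case the same conclusion holds because, for $y_{0},y_{1}\in \sigma _{i}$, $s\in (0,1)$ and $y=(1-s)y_{0}+sy_{1}$, the triple $(y,(y_{0},y_{1}),(1-s,s))$ is a polyhedral combination in $U$ (with $\tau =\sigma _{1}=\sigma _{2}=\sigma _{i}$), and the hypothesis applied on an appropriate $U_{\lambda '}$ together with the same local-to-global principle yields $f(y)\ge (1-s)f(y_{0})+sf(y_{1})$. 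Usual concavity on $\sigma _{i}$ applied to the segment $[x,x_{i}]$ gives $f(y_{i}(t))\ge (1-t)f(x)+tf(x_{i})$; summing against $\nu _{i}$ and combining with the inequality coming from the hypothesis yields $f(x)\ge (1-t)f(x)+t\sum _{i}\nu _{i}f(x_{i})$, which rearranges to the desired $f(x)\ge \sum _{i}\nu _{i}f(x_{i})$. The most delicate point is engineering $\Pi '$, via the simultaneous choice of $P$ and $t$, so that the local star of $\tau '$ in $\Pi '$ contains no spurious $k$-dimensional polyhedra beyond the intended $\sigma _{i}'$ and the dimension of $\tau '$ matches that of $\tau $.
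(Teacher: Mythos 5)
Your proof is correct and follows essentially the same strategy as the paper's: for the substantive reverse direction, shrink the given polyhedral (or balanced) combination by a parameter $t$ until it fits inside a single $U_{\lambda}$, apply the local hypothesis there, and then use ordinary concavity of $f$ on segments inside each $\sigma_i$ (via Proposition~\ref{prop:1} and locality of classical concavity) to recover the original inequality. You supply more detail than the paper (the explicit subdivision by cutting with a small polytope, and the verification that the scaled witness $t\cdot c_{\Pi'}|_{U_\lambda}$ is $\beta_{U_\lambda}$-positive), while the paper simply asserts the existence of a suitable $\varepsilon$; your forward-direction remark about ``extending'' the $f_{l,j}$ from $U_\lambda$ to $U$ is the one place that is stated a bit too casually, but the paper itself dismisses that direction as clear.
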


\begin{proof}
  We give the proof for concave functions. The case of weakly concave
  functions can be treated similarly, by adding the condition that the
  considered polyhedral convex combination are balanced.

  If $f$ is concave on $U$ then clearly it is also concave on
  $U_{\lambda}$ for all $\lambda$, and so we only have to consider the
  reverse implication. Hence we suppose that $f|_{U_{\lambda }}$ is
  concave for all $\lambda $. Let then
  \begin{displaymath}
    s=(x,(x_{i})_{i \in I},(\nu _{i})_{i \in I})
  \end{displaymath}
  be a polyhedral convex combination in $U$, and $\tau $ and
  $\sigma _{i}$, $i\in I$, the polyhedra appearing in the definition
  of such a convex combination.  Since the classical notion of
  concavity is local and each polyhedron $\sigma_{i}$ is
    contained in $ U$, Proposition \ref{prop:1} implies that $f$ is
  concave in the usual sense on $\sigma_{i}$. As the segment
  $\ov{x\, x_{i}}$ is contained in $\sigma_{i}$, this implies that $f$
  is concave in the usual sense on $\ov{x\, x_{i}}$.

  For $\varepsilon \in[0,1]$ the point $ x+\varepsilon\, (x_{i}-x)$
  lies in $U$ for all $i\in I$, and so the triple
  \begin{equation}\label{eqn:conv-combi}
    \varepsilon\, s=(x,(x+\varepsilon (x_{i}-x))_{i\in I},(\nu _{i})_{i \in I})
  \end{equation}
  is also a polyhedral convex combination in $U$. 

  Let $\lambda $ such that $x\in U_{\lambda }$. Since $U_{\lambda }$
  is open, there is $0<\varepsilon \le 1$ such that $\varepsilon\, s$
  is a polyhedral convex combination in $U_{\lambda }$. Since $f$ is
  concave on $U_{\lambda}$ in the sense of Definition \ref{def:1} and
  $f$ is concave in the usual sense on $\ov{x\, x_{i}}$ for each
  $i\in I$, then
  \begin{displaymath}
    f(x)\ge \sum_{i}\nu _{i}\, f(x+\varepsilon\, (x_{i}-x)) \ge
    \sum_{i}\nu _{i}\, (1-\varepsilon )\, f(x)+\sum_{i}\nu _{i}\,
    \varepsilon \, f(x_{i}).
  \end{displaymath}
  This readily implies that $ f(x)\ge \sum_{i}\nu _{i}f(x_{i})$ and so
  $f$ is concave.
\end{proof}

As shown in Example \ref{exm:8}, the property of being strongly
concave is not local. The argument in the proof of Proposition
\ref{prop:20} relied on the fact that if $s$ is a polyhedral convex
combination, so is the triple $\varepsilon\, s$ defined in
\eqref{eqn:conv-combi}. This property fails for arbitrary convex
combinations, as in the case of that example.

\begin{rem} \label{rem:5}
  An alternative approach to define concavity on polyhedral spaces
  would be in terms of double polars of cones of concave piecewise
  affine functions. More precisely, we say that a  signed Radon measure
  $\mu $ on $U$ with compact support is \emph{convex} if for every
  concave piecewise affine function $f$ on $U$ the inequality
  \begin{equation}\label{eq:54}
    \int f d\mu \ge 0
  \end{equation}
  is satisfied.  If $U$ is balanced, this measure $\mu$ is
  \emph{strongly convex} if the inequality~\eqref{eq:54} is satisfied
  for every weakly concave piecewise affine function $f$ on $U$. A
  function $f\colon U\to \R$ is \emph{$*$-concave} if the inequality
  \eqref{eq:54} is satisfied for every convex measure $\mu$, and it is
  \emph{weakly $*$-concave} if the inequality is satisfied for every
  strongly convex measure $\mu$.

  By Proposition \ref{prop:17}, any polyhedral (respectively,
  balanced) convex combination gives rise to a discrete convex
  (respectively, strongly convex) measure, and so it is clear that
  being $\ast$-concave (respectively, being weakly $\ast$-concave)
  implies being concave (respectively, being weakly concave).

  By the bipolar theorem, any $\ast$-concave function can be
  approximated by concave piecewise affine functions. By contrast, it
  is not clear yet whether the notion of being $\ast$-concave is
  local, or if the infimum of two $\ast$-concave functions is also
  $\ast$-concave.  If one can prove that any concave function can be
  approximated by concave piecewise affine functions, then every
  concave function would be $\ast$-concave and both definitions would
  agree. Similar considerations can be done concerning weakly concave
  and weakly $\ast$-concave. This is still an open question which we
  hope to continue studying.
\end{rem}

\section{Continuity properties}
\label{sec:cont-prop}

In this section we study the continuity properties of weakly concave
functions on balanced open subsets of polyhedral spaces. We denote by
$X$ a Euclidean polyhedral space of pure dimension $n$ and
$U \subset X$ an open subset that, unless otherwise stated, will be
assumed to be balanced, with balancing condition $\beta_{U}$.

Before discussing the different continuity properties we prove a
technical lemma.

\begin{lem}\label{lemm:3}
  Let $y\in U$ and let $\Pi $ be a polyhedral complex in $X$ such that
  the set
  \begin{displaymath}
    V= \hspace{2mm} \bigcup_{\mathclap{\substack{y\in \sigma \in \Pi \\ \sigma
        \subset U}}}\sigma 
  \end{displaymath}
  is a neighborhood of $y$. Then, for each point $x_{0}\in V$ there is
  a balanced convex combination
  \begin{displaymath}
    (y,(x_{i})_{0\le i\le r},(\nu _{i})_{0\le i\le r})
  \end{displaymath}
  centered at $y$, with $x_{0}$ as one of the points of the
  combination and $\nu _{0}>0$. % Moreover there is a subdivision
  % $\Pi '$ of $\Pi $ such that
  % \begin{displaymath}
  %   \tau _{i}\coloneqq \overline {y\,x_{i}}\in \Pi '(1).
  % \end{displaymath}
\end{lem}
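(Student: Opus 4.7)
The case $x_{0}=y$ is immediate: I will take the trivial balanced combination, where $\sigma_{0},\ldots,\sigma_{r}$ are all $1$-dimensional polyhedra at $y$ of some refinement, $\nu _{0}=1$, $\nu _{i}=0$ for $i\ge 1$, and the $\beta_{U}$-positive Minkowski weight is the zero weight. So the main work concerns the case $x_{0}\ne y$. By the hypothesis on $V$ and the local finiteness of $\Pi $, there exists $\sigma^{*}\in \Pi $ with $y, x_{0}\in \sigma^{*}\subset U$; the segment $[y,x_{0}]$, interpreted via the affine structure of $\sigma^{*}$, lies in $\sigma^{*}\subset V$.

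My strategy is to build the combination with $\tau =\{y\}$, so $k=1$, using the $1$-dimensional polyhedra at $y$ of a suitable refinement. First I refine $\Pi $ to a polyhedral complex $\Pi _{1}$ in which $y$ is a vertex, the segment $[y,x_{0}]$ is a $1$-dimensional polyhedron $\sigma _{0}$ of $\Pi _{1}$ at $y$, and every $1$-dimensional polyhedron of $\Pi _{1}$ at $y$ lies in $U$. By local finiteness I can select an open neighborhood $W\subset V$ of $y$ with $\Pi _{1}|_{W}$ finite. Then I apply Theorem~\ref{prop:projective} to $\Pi _{1}$ and $W$ to obtain a strongly concave function $f\in \SCPA(X)$ and a subdivision $\Pi _{2}=S(\Pi _{1},f)$ with $f$ strictly concave on $\Pi _{2}|_{W}$. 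I arrange that $f$ is affine on $\sigma _{0}$, for instance by choosing the defining linear forms in the explicit expression $f=\sum _{\sigma }f'_{\sigma }$ so that each $f'_{\sigma }$ is affine on $\sigma _{0}$; then $\sigma _{0}$ survives as an edge of $\Pi _{2}$ at $y$.

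Next I set $c:=f^{n-1}\cdot \beta _{U}\in M_{1}(\Pi _{2}|_{U})$, which is $\beta _{U}$-positive by Definition~\ref{def:8}. An induction on $k$, based on the strict concavity of $f$ on $\Pi _{2}|_{W}$ and Proposition~\ref{prop:19}\eqref{item:12}, shows that $f^{k}\cdot \beta _{U}$ is strictly positive on every $(n-k)$-dimensional polyhedron of $\Pi _{2}|_{W}$; in particular $c(\sigma _{0})>0$. Let $\sigma _{0},\sigma _{1},\ldots ,\sigma _{r}$ be all $1$-dimensional polyhedra of $\Pi _{2}$ at $y$. I pick arbitrary $x_{i}\in \sigma _{i}$ for $i\ge 1$ and rescale $c$ by a positive constant $\lambda $ so that $\sum _{i=0}^{r}\lambda c(\sigma _{i})/d(x_{i},\{y\})=1$; then I define $\nu _{i}:=\lambda c(\sigma _{i})/d(x_{i},\{y\})$. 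Writing $\iota (x_{i})=\iota (y)+d(x_{i},\{y\})\, v_{\sigma _{i}\setminus \{y\}}$ and using the balancing identity $\sum _{i}\lambda c(\sigma _{i})\, v_{\sigma _{i}\setminus \{y\}}=0$ for the Minkowski weight $\lambda c$ at the vertex $\{y\}$, I verify that $\sum _{i}\nu _{i}\iota (x_{i})=\iota (y)$; the relation $\nu _{i}\, d(x_{i},\{y\})=(\lambda c)(\sigma _{i})$ is built in, and $\nu _{0}>0$ since $c(\sigma _{0})>0$. This produces the desired balanced convex combination.

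The main technical obstacle is arranging that the regularization step preserves $\sigma _{0}$ as an edge at $y$; a priori the function $f$ produced by Theorem~\ref{prop:projective} could subdivide $\sigma _{0}$. This is handled by noting that each local piece $f'_{\sigma }=\min (0,\ell _{\sigma ,1},\ldots ,\ell _{\sigma ,m_{\sigma }})$ is affine on $\sigma _{0}$ provided either $\sigma _{0}\subset \sigma $, in which case $f'_{\sigma }\equiv 0$ on $\sigma _{0}$, or $\sigma _{0}$ exits $\sigma $ through a single facet at the vertex $y$; the latter genericity condition can always be achieved by a preliminary further refinement of $\Pi _{1}$ that puts the direction from $y$ to $x_{0}$ in general position with respect to the remaining polyhedra of $\Pi _{1}$ at $y$.
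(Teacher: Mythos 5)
Your proof follows the same general blueprint as the paper's: subdivide so that the segment $\sigma_0=\overline{y\,x_0}$ is an edge of the complex, form the $1$-dimensional $\beta_U$-positive weight $c = (\text{concave PA})^{n-1}\cdot b_U$, check $c(\sigma_0)>0$, and normalize. Your explicit rescaling to enforce $\sum_i \nu_i = 1$ is a welcome clarification. However, there is a genuine gap in the step where you arrange the regularizing function $f$ from Theorem~\ref{prop:projective} to be affine on $\sigma_0$. You assert that each local piece $f'_\sigma = \min(0,\ell_{\sigma,1},\dots,\ell_{\sigma,m_\sigma})$ is affine on $\sigma_0$ provided $\sigma_0$ exits $\sigma$ through a single facet at $y$. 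This is false. Parametrize $\sigma_0$ by $t\in[0,1]$ with direction $v$. Suppose $\ell_1(\iota(y))=0$, $\ell_1(v)=-1$ (so $\sigma_0$ leaves through the facet $\ell_1=0$ only), but also $\ell_2(\iota(y))=1>0$, $\ell_2(v)=-3$. Then $f'_\sigma(t)=\min(0,-t,1-3t)$ equals $-t$ for $t<\tfrac12$ and $1-3t$ for $t>\tfrac12$, giving a breakpoint at $t=\tfrac12$ in the interior of $\sigma_0$ even though the exit facet is unique. The proposed ``genericity'' fix does not save this: the direction $x_0-y$ is fixed by the data of the lemma, and refining $\Pi_1$ cannot make that fixed direction avoid the affine hyperplanes $\ell_{\sigma,j}=0$ of the neighboring polyhedra $\sigma$, which is what causes these breakpoints. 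If $\sigma_0$ gets subdivided in $S(\Pi_1,f)$, the surviving edge at $y$ is a proper subsegment of $\overline{y\,x_0}$ and no longer contains $x_0$, so the resulting balanced combination fails to have $x_0$ among its points, which the lemma requires.

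The paper sidesteps this entirely by building the auxiliary strongly concave function from defining linear forms of the \emph{segment} rather than of the polyhedra: choosing $\ell_j$ with $\iota_X(L)=\{\ell_j\ge 0\ \forall j\}$ and setting $g'=\min(0,\ell_1,\dots,\ell_m)$, one gets $g=g'\circ\iota_X\equiv 0$ on $L$ and $g<0$ off $L$. Hence $L$ is automatically a single affinity locus of $g$ inside every polyhedron of $\Pi$ that contains it, so $\tau_0=L$ is an edge of $S(\Pi,g)$ with no genericity condition at all. Furthermore, one does not need global strict concavity: to get $(g^{n-1}\cdot b_U)(\tau_0)>0$, it suffices to invoke Proposition~\ref{prop:18} for $g|_\sigma$ on the single (full-dimensional, relative to $N_\sigma$) polyhedron $\sigma$ that is minimal containing $y$ and $x_0$. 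If you replace your $f$ by this $g$ and use Proposition~\ref{prop:18} in place of Theorem~\ref{prop:projective} plus your induction, your argument closes.

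Two minor remarks. First, in the case $x_0=y$, rather than appealing to the zero $\beta_U$-positive weight (whose $\beta_U$-positivity relies on allowing the zero concave function in Definition~\ref{def:8}, which is defensible but delicate), you could simply run the construction for any auxiliary point $x_0'\ne y$ in $V$ and then replace the resulting $x_0'$ by $y$; since $d(y,\{y\})=0$, the balanced condition still holds with the same weight. Second, note that your normalization step shows that the paper's own displayed formula $\nu_i=c(\tau_i)/\|v_i\|$ also needs to be rescaled by a positive constant to meet the requirement $\sum_i\nu_i=1$; this is harmless since $\beta_U$-positive weights are stable under positive scaling, but it is worth being explicit about, as you are.
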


\begin{proof}
  Let $\sigma $ be the minimal polyhedron of $\Pi $ containing both
  $x_{0}$ and $y$.  Consider the segment $L=\overline {y\, x_{0}}$ and
  choose linear functions $\ell_{j}$, $j=1,\dots, m$, on $N_{X}$
  defining the image $\iota_{X}(L)$ as the subset of points
  $x\in H_{X}$ such that $\ell_{j}(x)\ge 0$ for all $j$. Consider the
  concave piecewise affine function $g'\colon H_{X}\to \R$ defined,
  for $x\in H_{X}$,~by
  \begin{displaymath}
    g'(x)=\min(0,\ell_{1}(x),\dots,\ell_{m}(x))
  \end{displaymath}
  and set $g=g'\circ \iota_{X}$. This is a strongly concave piecewise
  affine function on $X$ and it determines a subdivision
  $\Pi '\coloneqq S(\Pi,g)$ of $\Pi $, as in \eqref{eq:46}.
  
  By construction $\{y\}\in \Pi'(0)$ and there is a 1-dimensional
  polyhedron $\tau_{0} \in \Pi'(1)$ having $y$ and $x_{0}$ as
  vertices. Let $\tau _{i}$, $i=1,\dots, r$, be the other
  $1$-dimensional polyhedra of $ \Pi'(1)$ having $y$ as a vertex. Let
  $b_{U}$ be the Minkowski weight on $\Pi '$ defining the balancing
  condition $\beta_{U}$ on on $U$ and consider the $1$-dimensional
  $\beta_{U}$-positive Minkowski weight
      \begin{displaymath}
        c=g^{n-1}\cdot b_{U}.
      \end{displaymath}
      By Proposition \ref{prop:18} applied to the polyhedron $\sigma $
      and the concave piecewise affine function $g|_{\sigma }$, we
      deduce that $ c(\tau _{0})>0$.
      
      For $i=1,\dots, r$ set $x_{i}$ for the vertex of $\tau _{i}$
      different from $y$, and put
      \begin{displaymath}
        v_{i}=x_{i}-y \and \nu _{i}=\frac{c(\tau _{i})}{\|v_{i}\|}, \quad i=0,\dots, r.   
      \end{displaymath}
      Then the
      triple $ (y,(x_{i})_{0\le i\le r},(\nu _{i})_{0\le i\le r})$ is
      a balanced convex combination on $U$ satisfying the conditions
      of the lemma.
\end{proof}

The next  result  is the analogue of the fact that a concave function
on an open subset of $\R^{n}$ is continuous \cite[Theorem 10.1]{ROCK}.

\begin{thm}\label{thm:2}
  Let $f$ be a weakly concave function on $U$. Then $f$ is continuous.
\end{thm}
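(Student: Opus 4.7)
The plan is to establish continuity of $f$ at an arbitrary point $y \in U$ by proving lower and upper semicontinuity separately, combining Proposition \ref{prop:1} (which gives classical concavity of $f$ on every polyhedron contained in $U$) with Lemma \ref{lemm:3} (which produces balanced convex combinations centered at $y$ with a prescribed nearby point appearing with positive weight).

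First I would fix a polyhedral complex $\Pi$ on which the balancing condition $\beta_U$ is defined, and a neighborhood $V$ of $y$ equal to the union of the finitely many polyhedra $\sigma \in \Pi$ satisfying $y \in \sigma \subset U$, as in Lemma \ref{lemm:3}. For lower semicontinuity, Proposition \ref{prop:1} tells me $f|_\sigma$ is concave in the classical sense on each such $\sigma$, and hence $\liminf_{z \to y,\ z \in \sigma} f(z) \ge f(y)$ (a standard property of classical concave functions on convex sets: take the limit in $f((1-t)y + tz) \ge (1-t)f(y)+tf(z)$ as $t \to 0$). Given any sequence $x_n \to y$ in $U$, the local finiteness of $\Pi$ lets me extract a subsequence contained in a single polyhedron $\sigma$ with $y \in \sigma \subset U$, yielding $\liminf f(x_n) \ge f(y)$, which gives lower semicontinuity at $y$.

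Second, to establish upper semicontinuity I first obtain local boundedness above via Lemma \ref{lemm:3}. For each $x \in V$, the lemma supplies a balanced convex combination $(y,(x,x_1,\dots,x_r),(\nu_0,\nu_1,\dots,\nu_r))$ with $\nu_0>0$, and weak concavity yields
\begin{equation*}
  \nu_0\, f(x)\ \le\ f(y)-\sum_{i=1}^{r}\nu_i\, f(x_i).
\end{equation*}
The auxiliary points $x_i$ arise as the endpoints of the $1$-dimensional faces at $y$ of a subdivision of $\Pi$, and inspection of the construction in Lemma \ref{lemm:3} shows they can be arranged to lie in a fixed compact neighborhood of $y$ in $U$; on such a compact set the lower semicontinuity already established bounds $f$ from below. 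Provided $\nu_0$ admits a uniform positive lower bound as $x$ varies in a small neighborhood of $y$, the displayed inequality then gives a uniform upper bound on $f$.

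With $f$ now locally bounded above near $y$, I would deduce $\limsup_{x \to y} f(x) \le f(y)$ by the classical concavity argument inside each polyhedron: given $x_n \to y$ in a single polyhedron $\sigma \ni y$ (after passing to a subsequence via local finiteness), pick a point $w \in \relint(\sigma)$ on the opposite side of $y$ from the $x_n$ at uniformly bounded distance, write $y = t_n x_n + (1-t_n) w$ with $t_n \to 1$, and apply the classical concavity of $f|_\sigma$ to obtain $f(x_n) \le f(y)/t_n - ((1-t_n)/t_n)\, f(w)$, which tends to $f(y)$. The main obstacle is the careful analysis of the construction in Lemma \ref{lemm:3} required to guarantee the uniform positive lower bound on $\nu_0$ and the uniform compactness of the set of auxiliary points $x_i$ as $x$ varies; a secondary subtlety is selecting the reference point $w$ when $y$ lies on a proper face of $\sigma$, where one must make sure the segment from $x_n$ through $y$ can be continued inside $\sigma$ to a fixed distance.
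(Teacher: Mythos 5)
There is a genuine gap in your upper semicontinuity argument, and it lies precisely in the part you dismiss as a ``secondary subtlety.'' In your second step you want to show $\limsup_{x \to y} f(x) \le f(y)$ by the classical argument inside a single polyhedron $\sigma$: write $y = t_n x_n + (1-t_n) w$ with $w \in \sigma$ on the far side of $y$ and $t_n \to 1$. This requires $y$ to lie strictly between $x_n$ and a point $w$ of $\sigma$, which is impossible whenever $y$ is on the relative boundary of $\sigma$ and $x_n$ approaches from $\relint(\sigma)$ --- but this is exactly the interesting case, since if $y \in \relint(\sigma)$ for a top-dimensional $\sigma$ the theorem is already classical. Worse, the underlying fact you would need --- that a real-valued concave function on a polytope is upper semicontinuous at boundary points --- is simply false (on $[0,1]$, $f(0)=0$, $f(x)=1$ for $x>0$ is concave, bounded, and not usc at $0$). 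Local boundedness above does not rescue this. Separately, your first step needs a uniform positive lower bound on $\nu_0$ as $x$ varies, which the construction of Lemma \ref{lemm:3} does not obviously supply, since the auxiliary function $g$ there is built from the segment $\overline{y\,x_0}$ and hence changes with $x_0$.

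The paper avoids both difficulties with a different use of Lemma \ref{lemm:3}. It fixes a single $x_0 \in \relint(\sigma)$, obtains one balanced convex combination $(y,(x_i)_{0\le i\le r},(\nu_i))$, and notes that the rescaled combinations $(y,(y+\eta v_i)_i,(\nu_i))$ are again balanced for every $0<\eta\le 1$, giving $f(y)\ge\sum_i\nu_i f(y+\eta v_i)$. It then invokes lower semicontinuity of $f$ restricted to each one-dimensional segment $\tau_i$ (via Rockafellar Thm.\ 10.2 applied to $f|_{\tau_i}$, which is classically concave by Proposition \ref{prop:1}); isolating the $i=0$ term in the inequality converts these lower bounds into an upper bound $f(y+\eta v_0)\le f(y)+C\varepsilon$, i.e.\ upper semicontinuity along $\tau_0$. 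Since $\tau_0$ can point towards any $x_0\in\relint(\sigma)$ and $\cl(f|_\sigma)$ is continuous on $\sigma$, this pins down $\cl(f|_\sigma)(y)=f(y)$ and finishes the proof. The key structural move, which your proposal misses, is to read the weak concavity inequality ``backwards'' --- using lower semicontinuity of the other terms to bound the distinguished term from above --- rather than first trying to establish a uniform bound and then falling back on the classical usc argument, which is not available at the boundary.

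Finally, a smaller point: your lower semicontinuity one-liner (``take the limit in $f((1-t)y+tz)\ge(1-t)f(y)+tf(z)$ as $t\to 0$'') only gives semicontinuity along the ray towards a fixed $z$; to get it along arbitrary sequences $z_n\to y$ in $\sigma$ you still need to write $z_n=(1-s_n)y+s_n w_n$ with $s_n\to 0$ and $w_n$ in $\sigma$ at bounded distance, and use that a finite concave function on a polytope is bounded below (by its minimum over the vertices). The conclusion is correct, but the argument as stated is not yet a proof.
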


\begin{proof}
  Let $y\in X$ and choose a polyhedral complex $\Pi $ on $X$ such that
  the union of the polyhedra $\sigma \in \Pi $ with
  $y\in \sigma$ and $\sigma \subset U$ is a neighborhood
  of $y$. To prove that $f$ is continuous at $y$, it is
  enough to prove that for each such $\sigma$, the restriction
  $f|_{\sigma }$ is continuous at $y$.

  By Proposition \ref{prop:1}, $f|_{\sigma }$ is concave.  Let
  $f'=\cl(f|_{\sigma })$ be its closure, that is, the upper
  semicontinuous hull of the concave function $f|_{\sigma }$, see \cite[\S 7]{ROCK}
  for the analogous notion for convex functions.  The function $f'$
  is concave and, by \cite[Theorem~10.2]{ROCK}, it is continuous and
  agrees with $f|_{\sigma }$ in the relative interior of $\sigma
  $. Hence, to prove the continuity of $f|_{\sigma }$ at $y$ it is
  enough to show that $f|_{\sigma }(y)=f'(y)$.

  For $x_{0}\in \relint(\sigma )$ let $\Pi'$ be the subdivision of
  $\Pi$ and $(y,(x_{i})_{0\le i\le r},(\nu _{i})_{0\le i\le r})$ the
  balanced convex combination given by Lemma \ref{lemm:3}. For each
  $i$ write $v_{i}=x_{i}-y$ for the vector spanning the segment
  $\tau _{i}$.  The weak concavity of $f$ then implies that for every
  $0<\eta\le 1$,
      \begin{equation}\label{eq:6}
        f(y)\ge \sum_{i=0}^{r} \nu _{i}\, f(y+\eta \, v_{i}). 
      \end{equation}
      
      By Proposition \ref{prop:1}, the restriction of $f$ to each
      $\tau _{i}$ is concave and so, by \cite[Theorem 10.2]{ROCK}, it
      is lower semicontinuous.  Hence given $\varepsilon >0$ there is
      $\eta_{0}>0$ such that for each $0<\eta<\eta_{0}$ and each
      $0\le i\le r$ we have that
      \begin{equation}
        \label{eq:7}
        f(y+\eta\,  v_{i})\ge f(y) -  \varepsilon .
      \end{equation}
      Combining the inequalities \eqref{eq:6} and \eqref{eq:7} and the
      fact that $\sum_{i=0}^{r}\nu _{i}=1$, we
      deduce that, for $0<\eta\le \eta_{0}$,
      \begin{multline*}
                f(y+\eta \, v_{0}) \le \frac{1}{ \nu _{0}} \,
        f(y)-\sum_{i=1}^{r} \frac{\nu _{i}}{ \nu _{0}}\,
        f(y+\eta
        \, v_{i})\\
         \le \frac{1}{ \nu _{0}}\,
        f(y)-\sum_{i=1}^{r}\frac{\nu _{i}}{ \nu _{0}}\,(f(y)-\varepsilon ) 
        =
        f(y)+\Big( \sum_{i=1}^{r}\frac{\nu _{i}}{ \nu _{0}}\Big)\,\varepsilon.
      \end{multline*}
      Therefore $f|_{\tau _{0}}$ is upper semicontinuous at $y$ and so it
      is continuous at this point, completing the proof.
    \end{proof}

    \begin{cor}\label{cor:2}
      Suppose that $X$ is a locally balanceable polyhedral space,
      $U\subset X$ an open subset that is not necessarily balanced,
      and $f$ a concave function on $U$. Then $f$ is continuous.
    \end{cor}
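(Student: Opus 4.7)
The plan is to reduce the statement to Theorem \ref{thm:2} by exploiting that continuity is a local property. It suffices to prove that $f$ is continuous at each point $y\in U$. To this end, I would use the local balanceability of $X$ to choose an open subset $U_{i}\subset X$ containing $y$ and admitting a balancing condition $\beta_{i}$, and then set $W=U\cap U_{i}$, which is an open neighborhood of $y$ inside $U$.

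The first step is to verify that $W$ inherits the structure of a balanced open subset. If $\beta_{i}$ is represented by a strictly positive Minkowski weight $b$ on some $\Pi|_{U_{i}}$, then Proposition \ref{prop:19}\eqref{item:15} ensures that the restriction $b|_{W}$ is again an $n$-dimensional Minkowski weight on $\Pi|_{W}$, and strict positivity on $n$-dimensional polyhedra is obviously preserved under restriction. Hence $\beta_{i}|_{W}$ is a balancing condition on $W$.

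The next step is to transfer concavity from $U$ to $W$ and upgrade it to weak concavity. By Proposition \ref{prop:20}, concavity is local, so $f|_{W}$ is concave on $W$. Now I would invoke the essentially tautological observation that on any balanced open subset, concavity implies weak concavity: by Definition \ref{def:5}, every balanced convex combination is in particular polyhedral, so if the concavity inequality \eqref{eq:5} holds for all polyhedral convex combinations in $W$, it holds a fortiori for all balanced ones. Thus $f|_{W}$ is weakly concave on the balanced open subset $W$.

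Applying Theorem \ref{thm:2} to $f|_{W}$ then yields that $f|_{W}$ is continuous on $W$, and in particular $f$ is continuous at $y$. Since $y\in U$ was arbitrary, $f$ is continuous on $U$. I do not anticipate any real obstacle: the proof is a direct assembly of the locality of concavity, the restriction behavior of balancing conditions, the trivial inclusion of balanced convex combinations into polyhedral ones, and the already proven Theorem \ref{thm:2}.
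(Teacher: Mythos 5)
Your proof is correct and follows essentially the same route as the paper's: cover $U$ by open subsets on which a balancing condition is available, observe that a concave function is automatically weakly concave there (since balanced convex combinations are a subclass of polyhedral ones), and invoke Theorem \ref{thm:2}. You simply spell out two small steps that the paper leaves implicit, namely that the restriction of a balancing condition to $W=U\cap U_i$ is again a balancing condition (via Proposition \ref{prop:19}) and the inclusion of balanced into polyhedral convex combinations; also note that restriction of a concave function to an open subset is concave directly from Definition \ref{def:1}, so the appeal to Proposition \ref{prop:20} is not strictly needed.
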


    \begin{proof}
      Let $U=\bigcup_{i} U_{i}$ be an open covering of $U$ by
      balanceable open subsets.  Since the condition of being
      continuous is local, it is enough to show that $f|_{U_{i}}$ is
      continuous for each $i$.  Since $f$ is concave, the restriction
      $f|_{U_{i}}$ is weakly concave with respect to any balancing
      condition on $U_{i}$ and, by Theorem \ref{thm:2}, it is
      continuous.
    \end{proof}

    \begin{exmpl}\label{exm:14}
      The condition of being locally balanceable in Corollary
      \ref{cor:2} is necessary for the validity of its conclusion. For
      instance, the ray $\R_{\ge 0}$ is not locally balanceable
      (Example \ref{exm:13}) and the function
      $f\colon \R_{\ge 0}\to \R$ defined by $f(x)=1$ for $x>0$ and
      $f(0)=0$ is concave but not continuous.
    \end{exmpl}

    \begin{rem}\label{rem:2} 
      The rest of the results of this section are only stated for
      weakly concave functions on a balanced open subset but as in
      Corollary~\ref{cor:2}, they all admit a variant for concave
      functions on an open subset of a locally balanceable polyhedral
      space.
\end{rem}

A consequence of Theorem \ref{thm:2} is that weakly concave functions
satisfy a minimum principle. The next result extends
\cite[Theorem~32.1]{ROCK} to balanced open subsets of polyhedral
spaces.

\begin{prop}\label{prop:22}
  Suppose that $U$ is connected and let $f$ be a weakly concave
  function on $U$. If there is a point $y\in U$ such that
  $ f(y)=\inf_{x\in U}f(x)$, then $f$ is constant.
\end{prop}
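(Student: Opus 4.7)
The plan is to show that the set
\[
A = \{x \in U : f(x) = m\}, \quad \text{where } m = f(y) = \inf_{x \in U} f(x),
\]
is both open and closed in $U$. Combined with the connectedness of $U$ and the fact that $y \in A$, this will force $A = U$ and hence $f \equiv m$. Closedness of $A$ is immediate from continuity: by Theorem \ref{thm:2} the weakly concave function $f$ is continuous on $U$, so $A = f^{-1}(\{m\})$ is closed.

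The main content is openness. I would fix $y' \in A$ and produce a neighborhood of $y'$ contained in $A$ as follows. First, choose a polyhedral complex $\Pi$ on $X$ such that every polyhedron of $\Pi$ containing $y'$ is itself contained in $U$; this is arranged by subdividing any given polyhedral complex near $y'$, exploiting the local finiteness of polyhedral structures together with the openness of $U$. With such a $\Pi$ in hand, the set
\[
V = \bigcup_{\substack{y' \in \sigma \in \Pi \\ \sigma \subset U}} \sigma
\]
is the closed star of $y'$ in $\Pi$ and hence a neighborhood of $y'$, so the hypothesis of Lemma \ref{lemm:3} is met. Given any $x_0 \in V$, the lemma produces a balanced convex combination $(y', (x_i)_{0 \le i \le r}, (\nu_i)_{0 \le i \le r})$ centered at $y'$ with $x_0$ among the points of the combination and $\nu_0 > 0$. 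The weak concavity of $f$ then yields
\[
m = f(y') \ge \sum_{i=0}^r \nu_i \, f(x_i) \ge \sum_{i=0}^r \nu_i \, m = m,
\]
so equality holds throughout. Since every summand $\nu_i (f(x_i) - m)$ is nonnegative and the sum is zero, each must vanish; as $\nu_0 > 0$ this forces $f(x_0) = m$, i.e.\ $x_0 \in A$. Therefore $V \subset A$, showing that $A$ is a neighborhood of $y'$ and hence open.

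The only mildly subtle point is the initial choice of $\Pi$ at $y'$, guaranteeing that the closed star of $y'$ lands inside $U$. I expect this to follow from a direct subdivision argument based on the local finiteness built into Definition \ref{def:10} and the openness of $U$ (alternatively, one may invoke Theorem \ref{prop:projective} to construct an explicit subdivision), and I do not foresee a serious obstacle. Once this is in place, the weak concavity inequality combined with the minimality $f(y') = m$ performs the remaining work essentially automatically.
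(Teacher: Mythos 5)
Your proof is correct and follows essentially the same approach as the paper: reduce to showing $f^{-1}(m)$ is open (closedness by Theorem~\ref{thm:2}), then use Lemma~\ref{lemm:3} to produce a balanced convex combination at each point of the level set, and combine weak concavity with minimality. You are in fact slightly more careful than the paper in running the openness argument at an arbitrary $y' \in A$ rather than only at the given $y$, which is the logically complete version of what the paper does implicitly.
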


\begin{proof}
  Assume that there is a point $y\in U$ realizing the infimum of $f$
  on $U$ and set $m=f(y)$.  By Theorem \ref{thm:2}, $f$ is continuous
  and so $f^{-1}(m)$ is a nonempty closed subset of $U$. Since $U$ is
  connected, to complete the proof it suffices to show that this
  subset is also is open.

  Choose a polyhedral complex $\Pi$ on $X$ such that every $\sigma \in
  \Pi $ containing the point $y$, is contained in $U$.  Hence the
  subset $V\coloneqq \bigcup_{\sigma \ni y}\sigma $ is a neighborhood of $y$
  contained in $U$.  By Lemma \ref{lemm:3},
  for each point $x_{0}\in V$ there is a balanced convex
  combination $(y,(x_{i})_{0 \le i\le r},(\nu _{i})_{0 \le i\le r})$
  with $\nu _{0}>0$. Since $f$ is weakly concave, we
  have that
  \begin{displaymath}
    f(y)\ge \sum_{i=0}^{r} \nu _{i}\, f(x_{i}).
  \end{displaymath}
  Moreover, since $ \sum_{i=0}^r \nu_{i}=1 $ and $ f(x_{i})\ge f(y)$,
  $ i = 0, \dots, r$, we deduce that $f(x_{i})=f(y)$ for all $i$ and,
  in particular, that $f(x_{0})=m$. Hence $f$ is constant in $V$ 
  and so $f^{-1}(m)$ is an open subset,
  concluding the proof.
\end{proof}

Our next objective is to show that a weakly concave function is not
just continuous, but Lipschitz continuous on any compact subset of its
domain, and that its Lipschitz constant can be bounded in terms of the
supremum of the function on a slightly larger open subset. The main
technical tool is a variant of \cite[Proposition
2.2]{BoucksomFabreJonsson:rit} and \cite[Proposition A.1]{BFJ} stated in
Lemma \ref{lemm:4}.

Let $(Z,d)$ be a metric space and $f \colon V\to \R$ a function on a
subset of it. The \emph{ Lipschitz constant} of $f $ on $V$ with
respect to $d$ is defined as
\begin{displaymath}
  \Lip_{V,d}(f )=\sup_{\substack{x, y \in V \\ x\not =
    y}}\frac{|f (x)-f (y)|}{d(x,y)}\in  \R_{\ge 0} \cup \{+\infty\},
\end{displaymath}
with the convention that, if $V$ consists of a single point, then
$\Lip_{V}(f )=0$.  A function $f $ is called \emph{Lipschitz
  continuous} on $V$ with respect to $d$ if $\Lip_{V,d}(f
)<+\infty$.

For a polytope $\tau$ in $X$, the Euclidean structure of $X$ induces a
metric on $\tau$.  Given a function $f \colon \tau \to \R$, we denote
by
\begin{displaymath}
\Lip_{\tau}(f )  
\end{displaymath}
the Lipschitz constant of $f $ on $\tau$ with
respect to this metric.  Following \cite[\S 2]
{BoucksomFabreJonsson:rit} and
\cite[Appendix A]{BFJ}, given two
points $u,v\in \tau $, we also denote by
\begin{equation*}
  D_{v}f (u)\coloneqq \frac{d}{dt}\Big|_{t=0^{+}}
  f ((1-t)v+tu)
\end{equation*}
the {derivative of $f $ at $v$ in the direction of $u$}, whenever the
limit exists. This holds when $f $ is concave, in which case this
derivative is as an element of $\R \cup \{+\infty\}$.
 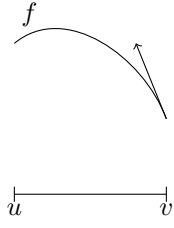
\begin{figure}[h]
\begin{center}
\begin{tikzpicture}[scale=1]    
     \draw (0,0) -- (2,0);
     \draw[thin] (0,-0.1)--(0,0.1);
     \draw[thin] (2,-0.1)--(2,0.1);
     \draw (0,0) node[below]{$u$};
     \draw (2,0) node[below]{$v$};     
     \draw (0,2) to[out=40,in=110] (2,1);
     \draw[->] (2,1) to (1.6,2);
     \draw (0.2,2.1) node[above]{$f $};     
\end{tikzpicture}
\end{center}
\caption{The derivative of $f $ at $v$ in the direction of
  $u$}\label{fig:dir-der}
\end{figure}

\begin{lem}\label{lemm:2}
  There exists a constant $C>0$ depending only on $\tau $ such that, for
  each continuous concave function $f \colon \tau\to \R$ and each
  subset $A\subset \tau $ such that $\tau \setminus A$ has
  Lebesgue measure zero, the following estimate holds
  \begin{equation}
    \label{eq:16}
    C^{-1}\Lip_{\tau }(f ) \le \sup_{\substack{v\in A\\
        e\in \tau (0)}} |D_{v}f (e)| \le
   C\, \Lip_{\tau }(f ).
  \end{equation}
\end{lem}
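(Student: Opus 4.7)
The plan is to prove the two inequalities separately: the right-hand one is essentially immediate, while the left-hand one reduces to a purely geometric compactness statement about the polytope $\tau$.

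For the upper bound $\sup_{v\in A,\,e\in\tau(0)} |D_v f(e)| \le C\,\Lip_\tau(f)$, I first reduce to the case $L:=\Lip_\tau(f) < \infty$, since otherwise the inequality is vacuous. For any $v \in A$ and $e \in \tau(0)$ the difference quotients satisfy
\[
\Bigl|\frac{f((1-t)v + te) - f(v)}{t}\Bigr| \le L\, \|e - v\| \le L\, \diam(\tau),
\]
and letting $t \to 0^+$ gives $|D_v f(e)| \le \diam(\tau)\cdot L$. Hence the right-hand constant can be taken to be $\diam(\tau)$.

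For the lower bound, the core tool is a compactness lemma depending only on $\tau$: there exists $c_0 > 0$ such that, for every $v \in \tau$ and every vector $V$ in the ambient Euclidean space,
\[
\|V\| \le c_0^{-1} \max_{e \in \tau(0)} |V \cdot (e - v)|.
\]
I would prove this by studying $\phi(v) := \min_{\|V\|=1} \max_{e \in \tau(0)} |V \cdot (e-v)|$, which is continuous on the compact set $\tau$ as a min–max of continuous functions over compacta. It is strictly positive at every $v \in \tau$: the vectors $\{e - v : e \in \tau(0)\}$ span the ambient space (any interior point of $\tau$, translated by $-v$, is a nonnegative combination of them, and interior points form an open set), so no unit vector can be orthogonal to all of them. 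Compactness then yields $c_0 := \min_\tau \phi > 0$.

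To conclude the lower bound, I reduce to $M := \sup_{v\in A,\,e\in\tau(0)} |D_v f(e)| < \infty$. Since $f$ is concave, it is differentiable almost everywhere on $\tau$; writing $D$ for the set of differentiability points, $A\cap D$ has full measure. For $v \in A \cap D$ one has $D_v f(e) = \nabla f(v)\cdot (e-v)$ for each vertex $e$, and the compactness lemma gives $\|\nabla f(v)\| \le c_0^{-1} M$. Thus $\|\nabla f\|$ is essentially bounded by $c_0^{-1}M$ on $\tau$. Integrating along a segment joining arbitrary $x, y \in \inter(\tau)$ (chosen by Fubini to miss the measure-zero non-differentiability set) and then extending to $\partial\tau$ by continuity of $f$ gives $\Lip_\tau(f) \le c_0^{-1} M$. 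Setting $C := \max\bigl(\diam(\tau),\, c_0^{-1}\bigr)$ finishes the proof.

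The main obstacle is the compactness claim; the conclusion step also requires the standard but slightly delicate passage from an almost-everywhere gradient bound to a global Lipschitz bound on the closed polytope, which relies on the continuity of $f$ up to $\partial\tau$ guaranteed by hypothesis (without which concave functions such as $x \mapsto -\sqrt{x}$ on $[0,1]$ show the conclusion can fail).
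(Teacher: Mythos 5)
Your proof is correct, but it takes a genuinely different route from the paper. The paper's own proof is essentially a pointer: it cites \cite[Lemma~A.2]{BFJ} for the case where $f$ is Lipschitz continuous and observes that the same argument gives $\sup|D_v f(e)|=+\infty$ otherwise, so the inequality holds trivially. You instead give a self-contained argument: the upper bound is the direct difference-quotient estimate with constant $\diam(\tau)$, and the lower bound is reduced to a geometric compactness lemma about $\tau$ — the existence of $c_0>0$ with $\|V\|\le c_0^{-1}\max_{e\in\tau(0)}|V\cdot(e-v)|$ uniformly in $v\in\tau$ — which is essentially what the BFJ lemma proves. The paper's route is terser but opaque; yours makes the constant $C=\max(\diam(\tau),c_0^{-1})$ and its dependence on $\tau$ explicit. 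Two small remarks for precision: (i) where you say the vectors $\{e-v: e\in\tau(0)\}$ ``span the ambient space,'' you should mean the linear space parallel to $\aff(\tau)$ (the polytope need not be full-dimensional in $N_X$); this is where the spanning holds and where $\nabla f(v)$ of $f|_\tau$ lives, so the argument is unaffected. (ii) In passing from the a.e. bound $\|\nabla f\|\le c_0^{-1}M$ to the Lipschitz bound, a slightly cleaner alternative to integrating along Fubini-chosen segments is to invoke the concavity inequality $f(u)\le f(v)+\nabla f(v)\cdot(u-v)$ for $v$ in the full-measure differentiability set, let $v$ run over a dense subset, and pass to all $v\in\tau$ by continuity of $f$, then symmetrize.
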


\begin{proof}
  If $f $ is Lipschitz continuous, this is \cite[Lemma
  A.2]{BFJ}. Otherwise, the argument in the proof of this result also
  shows that $ \sup_{v, e} |D_{v}f (e)| =+\infty $, and so the
  inequalities in \eqref{eq:16} are also satisfied in this case.
\end{proof}

For a function $f\colon \tau \to \R$ we set
\begin{displaymath}
  \|f \|_{C^{0}(\tau )}=\sup_{x\in \tau }|f (x)|
  \and   \|f \|_{C^{0,1}(\tau )}=\|f \|_{C^{0}(\tau )}+\Lip_{\tau
  }(f ).
\end{displaymath}

\begin{lem}\label{lemm:4}
  There is a constant $C>0$ depending only on $ \tau$ such that, for
  each continuous concave function $f \colon \tau\to \R$,
  \begin{equation}
    \label{eq:26}
    C^{-1}\, \|f \|_{C^{0,1}(\tau )}\le \|f \|_{C^{0}(\partial
      \tau )}
    +\sup |D_{v}f (e)|\le
    C\, \|f \|_{C^{0,1}(\tau )},
  \end{equation}
  the supremum being over the facets $F$ of $\tau$, the points
  $v\in \relint(F)$ and the vertices $ e$ of $\tau$ that are not
  contained in $F$.
\end{lem}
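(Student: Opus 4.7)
The plan is to treat the two inequalities in \eqref{eq:26} separately. Write $M=\|f\|_{C^{0}(\partial\tau)}$ and $D$ for the supremum appearing in the middle term. The right-hand inequality is immediate from the definitions: $M\le\|f\|_{C^{0}(\tau)}$, and for any admissible pair $(v,e)$ the definition of the directional derivative as a right-sided limit along the segment from $v$ to $e$ gives $|D_v f(e)|\le \Lip_{\tau}(f)\,\|e-v\|\le\diam(\tau)\,\Lip_{\tau}(f)$. Therefore $M+D\le(1+\diam(\tau))\,\|f\|_{C^{0,1}(\tau)}$, which is the right inequality with a constant depending only on $\tau$.

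For the left inequality the geometric input I would establish first is the following. Fix a vertex $e\in\tau(0)$ and a point $v\in\relint(\tau)$. The ray from $e$ through $v$ leaves $\tau$ at a unique point $v_{0}\ne e$; since any facet of $\tau$ containing $e$ meets this ray only at $e$ (its affine span cannot contain the ray, as $v\in\relint(\tau)$), the exit point $v_{0}$ lies on a facet $F$ with $e\notin F$. Moreover, for fixed $e$ the set of $v$ for which $v_{0}$ lands on a face of codimension $\ge 2$ is the image under central projection from $e$ of a finite union of lower-dimensional faces, hence has Lebesgue measure zero. Intersecting over the finitely many vertices produces a subset $A\subset\relint(\tau)$ of full measure such that, for every $v'\in A$ and every vertex $e$, one has a decomposition $v'=(1-s)v_{0}+s\,e$ with $v_{0}\in\relint(F)$, $e\notin F$, and $s\in(0,1)$. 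Setting $g(t)=f((1-t)v_{0}+te)$ yields a continuous concave function on $[0,1]$ with $g(0)=f(v_{0})$, $g(s)=f(v')$, $g(1)=f(e)$ and $g'(0^{+})=D_{v_{0}}f(e)$.

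With this in hand I would split the estimate into two parts. \emph{Sup-norm bound.} Concavity of $g$ at $t=s$ gives $f(v')=g(s)\le g(0)+s\,g'(0^{+})=f(v_{0})+s\,D_{v_{0}}f(e)\le M+D$ for $v'\in A$; the classical fact that a concave function on a polytope attains its minimum at a vertex gives $f\ge -M$ on $\tau$; continuity of $f$ then extends $|f|\le M+D$ from the dense set $A$ to all of $\tau$. \emph{Lipschitz-norm bound.} Lemma \ref{lemm:2} applied to $A$ gives $\Lip_{\tau}(f)\le C_{0}\sup_{v'\in A,\ e\in\tau(0)}|D_{v'}f(e)|$. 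The reparametrization $(1-t)v'+t\,e=(1-\lambda(t))v_{0}+\lambda(t)\,e$ with $\lambda(t)=s+t(1-s)$ shows $D_{v'}f(e)=(1-s)\,g'(s^{+})$. Monotonicity of $g'$ combined with $g'(0^{+})=D_{v_{0}}f(e)\in[-D,D]$ yields $D_{v'}f(e)\le D$ regardless of the sign of $g'(0^+)$, and the secant bound $g'(s^{+})(1-s)\ge g(1)-g(s)=f(e)-f(v')$ combined with the sup-norm bound gives $D_{v'}f(e)\ge -2(M+D)$. Hence $\Lip_{\tau}(f)\le 2C_{0}(M+D)$, and summing the two estimates gives $\|f\|_{C^{0,1}(\tau)}\le(2C_{0}+1)(M+D)$, proving the left inequality.

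The main obstacle is the genericity step: Lemma \ref{lemm:2} demands a bound on $|D_{v'}f(e)|$ for $v'$ running over a set whose complement has measure zero, so the decomposition $v'=(1-s)v_{0}+s\,e$ with $v_{0}\in\relint(F)$ and $e\notin F$ must be available simultaneously for every vertex $e$, which forces the finite intersection described above. Since $\tau$ has only finitely many vertices and finitely many faces of codimension $\ge 2$ this is not a deep technical issue, but the argument must be organized carefully so that Lemma \ref{lemm:2} can be invoked exactly as stated and the two sides of \eqref{eq:26} are related by a single constant depending only on $\tau$.
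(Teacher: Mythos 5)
Your proof is correct, and it takes a genuinely different route from the paper. The paper's own proof of Lemma~\ref{lemm:4} is a one-line citation to \cite[Proposition~A.1]{BFJ} in the Lipschitz case, together with the remark that Lemma~\ref{lemm:2} forces both sides of \eqref{eq:26} to $+\infty$ otherwise. You instead give a self-contained argument deducing the lemma from Lemma~\ref{lemm:2} by a ray-exit construction: for a generic $v'\in\relint(\tau)$ and each vertex $e$, the ray from $e$ through $v'$ exits $\tau$ at a point $v_{0}$ in the relative interior of a facet $F\not\ni e$; the tangent inequality for the concave restriction $g$ to the segment $[v_{0},e]$, together with the fact that a concave function on a polytope attains its minimum at a vertex, gives $\|f\|_{C^{0}(\tau)}\le M+D$; the reparametrization identity $D_{v'}f(e)=(1-s)\,g'(s^{+})$, squeezed between $g'(0^{+})=D_{v_{0}}f(e)$ from above (monotonicity of $g'$) and the secant $g(1)-g(s)$ from below, gives $|D_{v'}f(e)|\le 2(M+D)$ on the full-measure set $A$; and feeding that bound into Lemma~\ref{lemm:2} yields the Lipschitz estimate. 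This is essentially the argument of \cite{BFJ} written out in full, and it has a concrete advantage over the bare citation: the supremum in \eqref{eq:26} runs over $v\in\relint(F)$, a measure-zero subset of $\tau$, so Lemma~\ref{lemm:2} as stated does not directly assert that this supremum is infinite when $f$ is not Lipschitz. Your argument supplies the contrapositive that is actually needed, namely that finiteness of $\|f\|_{C^{0}(\partial\tau)}+\sup|D_{v}f(e)|$ forces $\Lip_{\tau}(f)<\infty$, and thereby makes the non-Lipschitz case rigorous as well.
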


\begin{proof}
  If $f $ is Lipschitz continuous, this is \cite[Proposition
  A.1]{BFJ}. Otherwise, Lemma \ref{lemm:2} implies that
  $ \|f \|_{C^{0,1}(\tau )}= \sup_{F,v,e} |D_{v}f (e)|=+\infty$, and
  so the inequalities in \eqref{eq:26} are also satisfied in this
  case.
\end{proof}

Our next task is to estimate the derivatives $D_{v}f (e)$ that
appear in Lemma \ref{lemm:4} for a polytope in a balanced open subset.

\begin{lem}\label{lemm:5} Let $\Pi $ be a polyhedral complex on $X$ such that
  $\Pi|_{U}$ is finite and $\Pi $ is regular on $U$. Then for every
  polytope $\tau \in \Pi $ contained in $U$ there is a
  constant $C>0$ such that for every weakly concave function
  $f \colon U\to \R$,
  \begin{displaymath}
    \Lip_{\tau }(f )\le C\, \|f \|_{C^{0}(U)}.
  \end{displaymath}
\end{lem}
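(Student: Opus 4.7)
The plan is to combine Lemma~\ref{lemm:4} applied to the restriction $f|_\tau$ with bounds on the directional derivatives coming from the weak concavity of $f$ on $U$. By Proposition~\ref{prop:1}, $f|_\tau$ is concave in the usual sense, and by Theorem~\ref{thm:2} it is continuous. Hence Lemma~\ref{lemm:4} furnishes a constant $C_0>0$ depending only on $\tau$ with
\[
\Lip_\tau(f)\le C_0\bigl(\|f\|_{C^0(\partial\tau)}+\sup |D_vf(e)|\bigr),
\]
where the supremum is over facets $F$ of $\tau$, points $v\in\relint(F)$, and vertices $e$ of $\tau$ not contained in $F$. The first term is trivially at most $\|f\|_{C^0(U)}$. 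The easy half of the second term is the \emph{lower} bound: concavity of $f|_\tau$ along $[v,e]\subset\tau$ yields $D_vf(e)\ge f(e)-f(v)\ge -2\|f\|_{C^0(U)}$.

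The core of the proof is the \emph{upper} bound on $D_vf(e)$. Since $\tau\subset U$ is compact in the open set $U$, $\Pi|_U$ is finite, and $\Pi$ is regular on $U$, Theorem~\ref{prop:projective} lets us pass (if needed) to a subdivision of $\Pi$, still finite and regular on $U$, in which every polyhedron containing $v$ is actually contained in $U$. The balancing condition $\beta_U$ then forces the star of $v$ to include a polyhedron $\sigma$ meeting the open half-space on the opposite side of $\iota_\tau(H_F)$ from $e$. Applying Lemma~\ref{lemm:3} with $y=v$ to a point $x_0=v-s_0(e-v)\in\sigma$ for some controlled $s_0>0$, we obtain a balanced convex combination
\[
(v,(x_0,x_1,\dots,x_r),(\nu_0,\nu_1,\dots,\nu_r))
\]
with $\nu_0>0$.

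Weak concavity of $f$ gives $\sum_{i=0}^{r}\nu_if(x_i)\le f(v)$, and using $|f(v)|,|f(x_i)|\le\|f\|_{C^0(U)}$ this rearranges to $f(v)-f(x_0)\le (2/\nu_0)\|f\|_{C^0(U)}$. Since $\sigma\subset U$, Proposition~\ref{prop:1} gives concavity of $f|_\sigma$ along $[x_0,v]$; combined with concavity of $f|_\tau$ along $[v,e]$, the inequality at $v$ supplied by the balanced combination plays the role of the missing concavity across the facet $F$, and a tangent-line comparison (three colinear points $x_0,v,x_0+\lambda(e-v)$) yields
\[
D_vf(e)\le\frac{f(v)-f(x_0)}{s_0}\le\frac{2}{s_0\nu_0}\,\|f\|_{C^0(U)}.
\]
The finiteness of $\Pi|_U$ and compactness of the facets of $\tau$ allow $s_0$ and $\nu_0$ to be chosen uniformly in $v\in\relint(F)$, and there are finitely many pairs $(F,e)$, so taking the supremum produces the desired constant $C$ depending only on $\tau$ (and on $\Pi$, $U$, $\beta_U$).

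The principal obstacle is precisely this upper bound. The naive strategy, namely to invoke concavity of $f$ along the whole line from $x_0$ through $v$ to $e$, is unavailable: Proposition~\ref{prop:1} produces concavity only on each polyhedron, never across the facet $F$ separating $\sigma$ from $\tau$. The substitute is the weak concavity inequality at $v$, together with the balanced convex combination produced by Lemma~\ref{lemm:3}; the regularity of $\Pi$ on $U$ is used to ensure enough polytopes around $v$ and to keep the weights $\nu_0$ bounded below uniformly, so that the final constant $C$ depends only on the geometric data and not on $f$.
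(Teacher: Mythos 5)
Your outline captures the right ingredients (Lemma~\ref{lemm:4}, Proposition~\ref{prop:1}, the trivial lower bound on $D_vf(e)$, weak concavity at a balanced convex combination centered at $v$) but the step doing all the work has a genuine hole. You set $x_0=v-s_0(e-v)$ and declare $x_0\in\sigma$. There is no reason for the point $\iota_X(v)-s_0(\iota_X(e)-\iota_X(v))$ to lie in $\iota_X(X)$ at all: the balancing condition forces the star of $F$ to contain a polyhedron $\sigma$ on the far side of $\iota_\tau(H_F)$, but it does not place any polyhedron along the specific ray opposite to $e$ through $v$. Even granting $x_0$, the inequality $D_vf(e)\le (f(v)-f(x_0))/s_0$ that you attribute to a ``tangent-line comparison'' is exactly the statement that $f$ is concave on the line segment $[x_0,e]$ across the facet $F$, which is not available; the weak-concavity inequality from a balanced convex combination at $v$ reads $\sum_i\nu_if(x_i)\le f(v)$ and involves all the $x_i$, so it cannot be used as a two-point concavity inequality across $F$ without first controlling the other terms.

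In the paper this control is supplied by two devices you omit. First, there is an induction on $\dim(\tau)$: each direction $e_i-v$ is split into a component tangent to $F$ and a component normal to $F$, the weighted balancing of a $\beta_U$-positive Minkowski weight $c=g^{n-k}\cdot b_U$ (with $g$ strictly concave on $\Pi|_U$, which is where regularity enters) produces a residual tangent vector $v_F$, and $|D_vf(v_F)|$ is bounded via the inductive bound $\Lip_F(f)\le C_F\|f\|_{C^0(U)}$ together with Lemma~\ref{lemm:2}. Second, the uniformity of the constants is established by extending the quantities $\alpha_i$ and $\|v_F\|$ to continuous non-vanishing functions on the compact facet $F$; by contrast your $\nu_0$ and $s_0$ come from an application of Lemma~\ref{lemm:3} whose auxiliary function $g$ is built from the segment $\overline{v\,x_0}$ and hence varies with $v$ in a way you do not control. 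Without these two steps the argument does not close.
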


\begin{proof}
  By possibly shrinking the open subset $U$, we can assume without
  loss of generality that its closure is compact.

  We prove the result by induction on the dimension of $\tau $. If
  $ \dim(\tau )=0$ the result is clear. So we assume that
  $\dim(\tau )=k \ge 1$ and that the result is true for every polytope
  of smaller dimension.  Let $F$ be a facet of $\tau$,
  $v\in \relint(F)$ and $ e$ a vertex of $\tau$ not contained in $F$.
  By Proposition \ref{prop:1}, $f $ is concave on $\tau $ in the usual
  sense. This implies that $ D_{v}(e)\ge f (e) - f (v) $ and so
  \begin{equation}\label{eq:3}
    - D_{v}(e) \le 2\, \|f \|_{C^{0}(\tau )}\le 2\,\|f \|_{C^{0}(U)}. 
  \end{equation}

  Let $\tau$ and $\sigma _{i}$, $i=1,\dots, r$, be the $k$-dimensional
  polyhedra of $\Pi $ containing $F$. Write
  $v_{0}=v_{\tau \setminus F}$ and
  $v_{i}=v_{\sigma _{i}\setminus F }$, $i=1,\dots, r$. Put $e_{0}=e$
  and for each $i$, pick a vertex $e_{i}$ of $ \sigma _{i}$ that is
  not contained in $ F$. Then for $i=0,\dots,r$ write
  \begin{displaymath}
    e_{i}=v+ v_{F ,i}+\alpha _{i} \, v_{i},
  \end{displaymath}
  with  $\alpha _{i}>0$ and  $v_{F,i}\in N_{F}$, the vector 
  space associated to $F$ (Definition \ref{def:10}).

  Let $g$ be a strictly concave function on $\Pi|_{U}$ (Definition
  \ref{def:projective}), and consider the $\beta_{U}$-positive
  $k$-dimensional Minkowski weight $c=g^{n-k}\cdot b _{U}$. Set
  $c_{0}=c(\tau )$ and $c_{i}=c(\sigma _{i})$, $i=1,\dots, r$. Since
  $c$ is a Minkowski weight,
\begin{displaymath}
  \sum_{i=0}^{r}\frac{c_{i}}{\alpha _{i}}\, (e_{i}-v)=
  \sum _{i=0}^{r}\frac{c_{i}}{\alpha _{i}} \,  v_{F,i}+
  \sum _{i=0}^{r}c_{i}\, v_{i}=\sum _{i=0}^{r}\frac{c_{i}}{\alpha _{i}}
  \, v_{F,i}\in N_{F}.
\end{displaymath}
Denote this last vector by $-v_{F}$, which implies pi=31415
that
$v_{F}+\sum _{i=0}^{r}\frac{c_{i}}{\alpha _{i}} \, v_{F,i}=0$.  Choose
$\lambda >0$ small enough so that
\begin{displaymath}
  p_{F}\coloneqq v+\lambda \, v_{F}\in F \and 
  p_{i}\coloneqq v+\lambda \, \frac{c_{i}}{\alpha _{i}} \,
  (e_{i}-v)\in V, \quad i=0,\dots, r.
\end{displaymath}
Then the convex combination in $V$
\begin{displaymath}
  \left(v,(p_{F},p_{0},\dots,p_{k}),\Big(\frac{1}{r+2},\dots, \frac{1}{r+2} \Big)\right)
\end{displaymath}
  is balanced. Since $f $ is weakly concave,
  we deduce that
  \begin{displaymath}
    0\ge D_{v}f (p_{F})+\sum_{i=0}^{r}D_{v}f (p_{i})=
    \lambda \, D_{v}f (v_{F}) +\sum_{i=0}^{r} \lambda \, \frac{c_{i}}{\alpha
      _{i}} \, D_{v}f (e_{i}). 
  \end{displaymath}
  Using the inductive hypothesis on $F$, Lemma \ref{lemm:2} and the
  analogues of the inequalities \eqref{eq:3} for 
  $\sigma _{i}  \cap \ov{V}$, $i=1,\dots, r$, we deduce that 
  \begin{align*}
    D_{v}f (e_{0})&\le \sum_{i=1}^{r}\frac{-\alpha _{0}\, c_{i}}{
      c_{0}\, \alpha _{i}}D_{v}f (e_{i})-\frac{\alpha _{0}}{
      c_{0}}D_{v}f (v_{F}) \\
    &\le \sum_{i=1}^{r}\frac{2\, \alpha _{0}\, c_{i}}{ c_{0}\, \alpha
      _{i}}\|f \|_{C^{0}(V)}+ C_{F}\, \frac{\alpha _{0}}{c_{0}}\, \|v_{F}
    \| \, \|f \|_{C^{0}(V)}.
  \end{align*}
  In the last inequalities, the $\alpha _{i}$'s depend on $v$, but
  they can be extended to a continuous non-vanishing function on the
  compact set $F$. Therefore they attain a strictly positive minimum and a
  finite maximum. Similarly, $\|v_{F}\|$ depends on $v$, but again can
  be extended to a continuous function on $F$, where it attains a
  maximum. Joining this discussion with the inequality \eqref{eq:3},
  there is a constant $C_{\tau }$ such that
  \begin{displaymath}
    |D_{v}(e))|\le C_{\tau }\, |f \|_{C^{0}(V)},
  \end{displaymath}
  and so the result follows from Lemma \ref{lemm:4}.
\end{proof}

We next discuss metric structures on polyhedral spaces.

\begin{defn}
  A \emph{polyhedral metric} on $X$ is a metric induced by an
  injective morphism of polyhedral spaces of $X$ into a Euclidean
  vector space.  For a compact subset $K\subset X$, a
  \emph{polyhedral metric} on $K$ is the restriction to this subset of
  a polyhedral metric on a polyhedral subspace of $X$ containing it.
\end{defn}

Every compact subset of a polyhedral space admits a polyhedral metric
and all of these metrics are equivalent, as the next result
  shows.

\begin{lem}\label{lemm:7}
  Let $K\subset X$ be a compact subset. Then $K$ admits a polyhedral
  metric, and for any pair $d, d'$ of these metrics on $K$, there is a
  constant $C>0$ such that, for all $x,y\in K$,
  \begin{equation}\label{eq:10}
    C^{-1} d(x,y)\le d'(x,y)\le C\, d(x,y).
  \end{equation}
\end{lem}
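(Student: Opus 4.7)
The statement has two parts, which I handle separately.

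For existence, fix any polyhedral complex $\Pi$ on $X$. Compactness of $K$ together with local finiteness of $\Pi$ imply that only finitely many polyhedra of $\Pi$ meet $K$; their union $Y$ is a compact polyhedral subspace of $X$ containing $K$. I would choose a simplicial subdivision of $Y$ (for instance, a barycentric subdivision) with vertex set $\{v_1,\dots,v_m\}$ and define $\Phi\colon Y\to \R^{m}$ by sending $v_j$ to the $j$-th standard basis vector and extending affinely on each simplex. Since distinct simplices have distinct vertex sets, their images are distinct faces of the standard $(m-1)$-simplex in $\R^{m}$, meeting only in the image of their common subsimplex; combined with the uniqueness of barycentric coordinates, this makes $\Phi$ an injective morphism of polyhedral spaces, and the restriction to $K$ of the pulled-back Euclidean metric is a polyhedral metric.

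For equivalence, let $d$ and $d'$ come from injective morphisms $\phi\colon Y\to \R^{N}$ and $\phi'\colon Y'\to \R^{N'}$ with $K\subset Y\cap Y'$. Compactness of $K$ and local finiteness allow one to select a compact polyhedral subspace $Z\subset Y\cap Y'$ containing $K$ together with a single polyhedral complex $\Pi^{*}$ on $Z$ (obtained from Proposition~\ref{prop:2} applied to compatible polyhedral structures on $Z$) on each polyhedron of which both $\phi$ and $\phi'$ are affine. Let $\tau_1,\dots,\tau_p$ denote the polyhedra of $\Pi^{*}$ meeting $K$. The map $f\coloneqq \phi'\circ \phi^{-1}\colon \phi(Z)\to \R^{N'}$ is continuous and affine on each image polyhedron $\phi(\tau_k)$, hence Lipschitz there with some constant $L_k$. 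I would then show that $f$ is globally Lipschitz on $\phi(Z)$ by a sequential-compactness argument: if the ratio $\|f(u_n)-f(v_n)\|/\|u_n-v_n\|$ were unbounded, I extract a subsequence with $(u_n,v_n)\to (a,b)$; the case $a\ne b$ contradicts continuity of $f$ and positivity of $\|a-b\|$, while for $a=b$ I further restrict so that $u_n\in \phi(\tau_i)$, $v_n\in \phi(\tau_j)$ for fixed indices $i,j$, with $a\in \phi(\tau_i\cap \tau_j)$, and invoke the fact that the tangent cones of $\phi(\tau_i)$ and $\phi(\tau_j)$ at $a$ meet transversally at a positive angle (their intersection being the tangent cone of the common face). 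This gives a uniform bound on the ratio. Lipschitz continuity of $f$ yields $d'\le C\, d$ on $K$, and the reverse inequality follows by swapping the roles of $\phi$ and $\phi'$.

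The main obstacle is the construction of the common polyhedral refinement on $Z$, since $\phi$ and $\phi'$ are defined on different polyhedral subspaces of $X$: one must first carve out $Z$ inside $Y\cap Y'$, using that by compactness $K$ is covered by finitely many polyhedra in each of $\Pi_Y$ and $\Pi_{Y'}$ and that their pairwise intersections can be organized (within the ambient polyhedral complex on $X$) into a polyhedral complex on a compact subset of $Y\cap Y'$ containing $K$. The angle-cone estimate then reduces to a standard orthogonal decomposition of the displacement $u_n-v_n$ into a component along the common face $\phi(\tau_i\cap \tau_j)$ (on which $f$ is unambiguously affine, controlled by the Lipschitz constant on that face) and a transverse component that admits a lower bound from the positive angle between the transverse parts of the cones.
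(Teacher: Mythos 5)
Your existence argument is the same as the paper's: take the finitely many polyhedra of a fixed complex meeting $K$, pass to a simplicial subdivision, and send vertices to standard basis vectors. The difference is in the equivalence of polyhedral metrics, where the paper simply invokes Luukkainen--V\"ais\"al\"a (\cite[Theorem 2.18]{ELT}, stating that polyhedral maps between compact polyhedral spaces with polyhedral metrics are Lipschitz), whereas you attempt a self-contained compactness argument. The paper's route is shorter and safer; yours avoids the external reference but takes on a real technical burden.

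That burden is the precise content of your ``positive angle'' step, and it is where a gap remains. After passing to a subsequence with $u_n,v_n\to a\in\phi(\tau_i\cap\tau_j)=:\phi(F)$, what you actually need is a uniform bound of the form $\dist(u_n,\aff\phi(F))+\dist(v_n,\aff\phi(F))\le C\|u_n-v_n\|$, which then feeds into the standard decomposition $|f(u_n)-f(v_n)|\le L_i\|u_n-w_n\|+L_j\|w_n-v_n\|$ through an intermediate point $w_n\in\aff\phi(F)$. Knowing that the two tangent cones at $a$ intersect precisely in the tangent cone of $\phi(F)$ (true, by local conicality of polyhedra) does not by itself give this estimate: after rescaling, one must bound $\dist(u,L_F)/\dist(u,C_j)$ for $u$ on the unit sphere inside $C_i=T_a\phi(\tau_i)$, and the dangerous regime is $u\to u^*\in C_F\cap S^{N-1}$, where both numerator and denominator tend to $0$. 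If $u^*$ is a boundary point of $C_F$ (not in its relative interior), a single blow-up does not settle the ratio and one has to iterate the tangent-cone analysis along a chain of faces; this terminates in finitely many steps because lineality strictly increases, but it needs to be said. Your phrase ``a transverse component that admits a lower bound from the positive angle between the transverse parts of the cones'' presupposes this quantitative control rather than establishing it. So the proposal is correct in spirit but incomplete exactly at the point the paper sidesteps by citing \cite[Theorem 2.18]{ELT}; either finish the iterated blow-up argument or cite that result as the paper does.
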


\begin{proof}
  Let $\Pi $ be a simplicial polyhedral complex on $X$, that is, a
  polyhedral complex such that its polyhedra are simplices. Then
  $\Pi_{0}=\{\sigma\in \Pi \mid \sigma \cap K\ne \emptyset\}$ is a
  finite collection of simplices of $\Pi$, and
  \begin{displaymath}
X_{0}=\bigcup_{\sigma\in \Pi_{0}}\sigma 
\end{displaymath}
is a polyhedral subspace of $X$ containing $K$.  Let $\mathcal{V}$ be
the set of vertices of $\Pi _{0}$ and consider the embedding
\begin{displaymath}
X_{0}\hooklongrightarrow \R^{\mathcal{V}}  
\end{displaymath}
that sends each vertex $v\in \cV$ to the corresponding vector $e_{v}$
in the standard basis of $ \R^{\mathcal{V}}$ and that is affine on the
simplices of $X_{0}$. Then the Euclidean metric of $\R^{\mathcal{V}}$
induces a polyhedral metric on $X_{0}$ and, \emph{a fortiori} on~$K$.

By Theorem \cite[Theorem 2.18]{ELT} any polyhedral map between compact
polyhedral spaces with polyhedral metrics is Lipschitz. This implies
the existence of the constant $C$ satisfying the inequalities
\eqref{eq:10}. 
\end{proof}

% Given a compact subset $K\subset X$ and a polyhedral metric $d$ on
% it,  for each function $f \colon K\to \R$ we denote by
% \begin{displaymath}
%   \Lip_{K,d}(f )
% \end{displaymath}
% the Lipschitz constant of $f $ with respect to $d$.  This quantity
% depends on the choice of the polyhedral metric $d$, but since all of
% them are equivalent, choosing another one only affects the bounds by
% an overall multiplicative constant.

\begin{thm}\label{thm:3}
  Let $K\subset U$ be a compact subset and $d$ a polyhedral metric on
  it. Then there is a constant $C>0$ such that, for every weakly
  concave function $f $ on~$U$,
  \begin{displaymath}
    \Lip_{K,d}(f )\le C\, \|f \|_{C^{0}(U)}.
  \end{displaymath}
\end{thm}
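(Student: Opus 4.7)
The strategy is to apply Lemma~\ref{lemm:5} on individual polytopes of a suitable finite polyhedral complex covering $K$ and to glue the resulting local Lipschitz bounds into a global one via a path argument combined with Lemma~\ref{lemm:7}.

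First I would choose an intermediate open set $V$ with $K \subset V \subset \overline V \subset U$ and $\overline V$ compact. Starting from any polyhedral complex on $X$---which by local finiteness restricts to a finite one on $V$---I would subdivide by cutting each polyhedron $\sigma$ meeting $K$ with the preimage under $\iota_{X,\sigma}$ of a bounded polytope $Q \subset N_X$ chosen so that $\iota_X(K) \subset \mathrm{int}(Q)$ and $\iota_{X,\sigma}^{-1}(Q) \cap \sigma \subset V$ for every such $\sigma$ (possible since only finitely many $\sigma$'s are involved and $V$ is open around $\sigma \cap K$). Applying Theorem~\ref{prop:projective} to this subdivision yields a further subdivision $\Pi$ that is regular on $V$ with $\Pi|_V$ finite, and the polyhedra of $\Pi$ meeting $K$ remain polytopes contained in $V$. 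Denote them by $\tau_1, \dots, \tau_N$ and set $K_0 = \bigcup_{i=1}^N \tau_i$. By Proposition~\ref{prop:19}\eqref{item:15} the restriction $\beta_U|_V$ is a balancing condition on $V$, and by Proposition~\ref{prop:20} the function $f|_V$ is weakly concave; thus Lemma~\ref{lemm:5} provides a uniform constant $C_0 > 0$ with $\Lip_{\tau_i}(f) \le C_0 \|f\|_{C^0(U)}$ for every $i$, the Lipschitz constant being computed with respect to the Euclidean metric on $\tau_i$ induced by $\iota_X$.

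For $x, y \in K$ lying in the same connected component of $K_0$, any piecewise linear path $\gamma$ in $K_0$ from $x$ to $y$ decomposes into segments contained in individual polytopes, and summing the Lipschitz estimates on each segment gives $|f(x) - f(y)| \le C_0 \|f\|_{C^0(U)}\, L(\gamma)$, where $L(\gamma)$ denotes the total Euclidean length. Taking the infimum yields $|f(x) - f(y)| \le C_0 \|f\|_{C^0(U)}\, \ell(x, y)$, where $\ell$ is the intrinsic path metric on $K_0$ induced by the Euclidean metrics on the polytopes. By Lemma~\ref{lemm:7} the given polyhedral metric $d$ on $K$ is Lipschitz equivalent to the polyhedral metric $d_0$ on $K_0$ produced by the embedding construction in the proof of that lemma. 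A standard compactness argument then shows that on each connected component of $K_0$ the intrinsic path metric $\ell$ and the extrinsic polyhedral metric $d_0$ are Lipschitz equivalent, $\ell \le C_1 d_0$; for $x, y$ in distinct components of $K_0$, the positive lower bound on $d_0$ between disjoint compact components combined with the trivial estimate $|f(x) - f(y)| \le 2\|f\|_{C^0(U)}$ gives the bound directly. Combining all of these yields $\Lip_{K, d}(f) \le C \|f\|_{C^0(U)}$ for a suitable $C$.

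The main technical obstacle is the Lipschitz equivalence $\ell \le C_1 d_0$ between the intrinsic path metric and the extrinsic polyhedral metric on a compact polyhedral complex embedded in Euclidean space; this is a classical fact from piecewise linear topology, provable by noting that locally each point of $K_0$ has a star neighborhood consisting of finitely many polytopes on which the intrinsic-to-extrinsic ratio is uniformly bounded, and then invoking compactness. The remaining ingredients---the construction of the subdivision $\Pi$ with the required finiteness, regularity, and polytope-containment properties, and the passage from per-polytope Lipschitz control to a global estimate---are routine combinations of Lemma~\ref{lemm:5}, Lemma~\ref{lemm:7}, Proposition~\ref{prop:20}, and Theorem~\ref{prop:projective}.
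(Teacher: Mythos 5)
Your proposal is correct and follows the paper's overall architecture---shrink $U$ to a relatively compact open set, pass to a finite regular subdivision via Theorem~\ref{prop:projective}, and apply Lemma~\ref{lemm:5} on each polytope meeting $K$---but your gluing step is genuinely different from the paper's. The paper instead performs a direct three-case analysis on pairs $x,y\in K$: if they lie in a common polyhedron, the per-polytope Lipschitz bound applies directly; if every pair of polyhedra of $\Pi_0$ containing them is disjoint, then $d(x,y)$ is bounded below by the minimal gap $B$ between such disjoint polyhedra and the trivial estimate $|f(x)-f(y)|\le 2\|f\|_{C^0(K)}$ suffices; and if they lie in intersecting polyhedra $\sigma,\sigma'$, one chooses a point $z\in\sigma\cap\sigma'$ with $\max\{d(x,z),d(y,z)\}\le C_1\, d(x,y)$ and bounds through $z$. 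You replace this with an intrinsic path metric on $K_0=\bigcup_i\tau_i$ and a comparison between the intrinsic metric $\ell$ and the extrinsic polyhedral metric $d_0$. Both routes ultimately rest on a nontrivial fact about the distance geometry of a fixed finite polyhedral complex: your Lipschitz comparison $\ell\le C_1\, d_0$ near the diagonal is equivalent in difficulty to the paper's existence of a bounded-distortion common point $z$ in $\sigma\cap\sigma'$. The paper's version is a little more elementary in that it only ever needs chains of length two and localizes the geometric input to a single pairwise intersection of polyhedra; your version is more conceptual and expresses the same input in the standard language of intrinsic versus extrinsic metrics on compact polyhedral complexes. One small point worth tightening in your construction: when you cut the polyhedra meeting $K$ by $\iota_{X,\sigma}^{-1}(Q)$, you should apply the cutting hyperplanes of $Q$ to \emph{all} polyhedra of the complex, not only those meeting $K$, so that the resulting collection still satisfies the common-face condition; compatibility across shared faces is then automatic because the same $Q$ is used throughout.
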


\begin{proof}
    Without loss of generality we can assume that the closure of $U$ is
  compact and, by Lemma \ref{lemm:7}, that the polyhedral metric is
  defined on the closure of
  $U$. Let $X_{0}$ be a polyhedral subspace of $X$ containing $U$ and
  $X_{0}\hookrightarrow N'$ an injective morphism of polyhedral spaces
  into a Euclidean vector space inducing the polyhedral metric $d$ on
  $\overline U$.

  Let $\Pi_{0}$ be a polyhedral complex on $X_{0}$,  such that every
  polyhedron $\sigma\in \Pi_{0}$ with $\sigma\cap K\ne \emptyset$ is
  contained in $U$. By Theorem \ref{prop:projective} we can assume
  that $\Pi _{0}$ is regular on $U$. Set
  \begin{displaymath}
    B=\min \{d(\sigma ,\sigma ')\mid \sigma ,\sigma '\in \Pi_{0} \text{
    with } \sigma \cap K, \sigma'\cap K \ne \emptyset \text{ and }
  \sigma \cap \sigma '=\emptyset\} >0.
  \end{displaymath}
  Let $x,y\in K$. If there is $\sigma\in \Pi_{0}$ such that
  $x,y\in \sigma $ then
 \begin{equation}
   \label{eq:8}
      \frac{|f(x)-f(y)|}{d(x,y)}\le \Lip_{\sigma, d}(f) \le C_{0}\, \Lip_{\sigma }(f),
 \end{equation}
where $\Lip_{\sigma }(f)$ denotes the Lipschitz constant of $f$ with
respect to the metric on $\sigma$ induced by the Euclidean structure
of  $X$, and $C_{0}>0$  depends only on $X$ and $d$. 

If there are $ \sigma ,\sigma '\in \Pi_{0} $ such that $x\in \sigma $,
$y\in \sigma '$ and $\sigma \cap \sigma '=\emptyset$ then
\begin{equation}
  \label{eq:9}
  \frac{|f(x)-f(y)|}{d(x,y)}\le \frac{2}{B} \, \|f\|_{C^{0}(K)}.
\end{equation}

Finally, if there are $ \sigma ,\sigma '\in \Pi_{0} $ such that
$x\in \sigma $, $y\in \sigma '$ and
$\sigma \cap \sigma ' \ne\emptyset$, there is a point
$z\in \sigma \cap \sigma '$ with
$ \max\{d(x,z),d(y,z)\}\le C_{1}\, d(x,y)$ for a constant $C_{1}>0$
depending only on $X$ and $d$. Then
    \begin{align} \label{eq:12}
\nonumber      \frac{|f(x)-f(y)|}{d(x,y)}& \le\frac{|f(x)-f(z)|}{d(x,y)}+\frac{|f(z)-f(y)|}{d(x,y)}
      \\
\nonumber      & \le C_{1}\, \frac{|f(x)-f(z)|}{d(x,z)}+C_{1} \,
      \frac{|f(z)-f(y)|}{d(z,y)} \\
     & \le
      2 \, C_{1}\, C_{0} \, (\Lip_{\sigma}(f)+\Lip_{\sigma'}(f)).
    \end{align}
It follows from \eqref{eq:8}, \eqref{eq:9} and \eqref{eq:12} that
  \begin{displaymath}
    \Lip_{K,d}(f)\le C \, \Big(\sup_{\substack{\sigma \in \Pi_{0} \\ \sigma \cap
      K\ne \emptyset}}\Lip_{\sigma }(f)+\|f\|_{C^{0}(K)}\Big)
  \end{displaymath}
  for a constant $C>0$ depending only on $X$ and $d$. The result then
  follows from Lemma \ref{lemm:5}.
\end{proof}

As a consequence we obtain the following stronger version of Theorem
\ref{thm:2}, extending \cite[Theorem 10.4]{ROCK}.

\begin{cor}\label{cor:lipschitz}
  Let $f $ be a weakly concave function on $U$. Then $f$ is Lipschitz
  continuous on any compact subset of $U$.
\end{cor}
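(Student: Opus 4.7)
The plan is to deduce this directly from Theorem \ref{thm:3} by passing to a relatively compact balanced open neighborhood of $K$ on which $f$ is bounded.

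First, I would note that a polyhedral space is locally compact: each polyhedron $\sigma$ is homeomorphic to a closed polyhedron in a finite-dimensional vector space, hence locally compact, and the polyhedra form a locally finite closed cover of $X$ (Definition \ref{def:10}), so every point of $X$ has a compact neighborhood. Combined with the fact that the underlying space is Hausdorff (Proposition \ref{prop:8}) and second countable, a standard argument produces an open subset $V \subset X$ with $K \subset V$ and $\overline{V} \subset U$ compact.

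Next, I would equip $V$ with the balancing condition $\beta_V := \beta_U|_V$ obtained by restricting $\beta_U$ (the restriction of a Minkowski cycle is a Minkowski cycle by Proposition \ref{prop:19}, and positivity is preserved). The restriction $f|_V$ is then weakly concave on the balanced open subset $V$, since weak concavity is local (Proposition \ref{prop:20}, or use that weak concavity at each balanced convex combination in $V$ is a condition already satisfied in $U$).

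Then I would invoke Theorem \ref{thm:2} (or Lemma \ref{lemm:7} to fix a polyhedral metric $d$ on a polyhedral subspace containing $\overline V$): $f$ is continuous on $U$, hence bounded on the compact set $\overline{V}$, so
\[
\|f\|_{C^0(V)} \le \sup_{\overline{V}} |f| < +\infty.
\]
Choose a polyhedral metric $d$ on the compact set $K$ via Lemma \ref{lemm:7}; by that lemma, any two such metrics are equivalent, so the conclusion of Lipschitz continuity will not depend on the choice.

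Finally, I would apply Theorem \ref{thm:3} with $V$ in place of $U$ and with $K \subset V$: there is a constant $C>0$ with
\[
\Lip_{K,d}(f) \le C \, \|f\|_{C^0(V)} < +\infty,
\]
which is exactly the statement that $f$ is Lipschitz continuous on $K$. The only step that requires care is constructing the neighborhood $V$ with $\overline V$ compact and contained in $U$, but this is immediate from local compactness of $X$ together with Hausdorffness; no genuine obstacle appears once Theorem \ref{thm:3} is in hand.
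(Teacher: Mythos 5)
Your proposal is correct and fills in exactly the argument the paper leaves implicit: the corollary is stated as an immediate consequence of Theorem~\ref{thm:3}, and the one point needing care is that $\|f\|_{C^0(U)}$ could be infinite, which you handle by shrinking to a relatively compact balanced $V$ containing $K$ and invoking continuity (Theorem~\ref{thm:2}) to bound $f$ on $\overline V$. Your auxiliary observations (local compactness of polyhedral spaces, that restriction of $\beta_U$ yields a balancing condition on $V$ by Proposition~\ref{prop:19}, and that $f|_V$ remains weakly concave by the easy direction of Proposition~\ref{prop:20}) are all accurate and consistent with the paper's setup.
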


The following is the analogue of \cite[Theorem 10.6]{ROCK} for weakly
concave functions on a balanced open subset of a polyhedral space.

\begin{thm}\label{thm:conv1}
  Let $\{f_{i}\}_{i\in I}$ be a family of weakly concave functions on
  $U$ and  $C',C''\subset U$  dense subsets such that
  $\inf_{i\in I} f_{i}(x)$ and $\sup_{i\in I} f_{i}(x)$ are finite for
  every $x\in C'$ and $x\in C''$, respectively.  Then
  $\{f_{i}\}_{i\in I}$ is uniformly bounded and equi-Lipschitz on any
  compact subset of $U$.
\end{thm}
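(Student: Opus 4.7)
\emph{Reduction via Theorem~\ref{thm:3}.} A uniform bound $\sup_{i}\|f_{i}\|_{C^{0}(W)}\le M$ on an open set $W$ with $K\subset W\subset \ov W\subset U$ and $\ov W$ compact yields, by Theorem~\ref{thm:3}, an equi-Lipschitz bound $\sup_{i}\Lip_{K,d}(f_{i})\le C\cdot M$ for any polyhedral metric $d$ on $\ov W$. Since uniform boundedness is a local property and $K$ is compact, the problem reduces to showing that every $y\in U$ admits an open neighborhood $V_{y}\subset U$ with $\sup_{i}\|f_{i}\|_{C^{0}(V_{y})}<\infty$; $K$ is then covered by finitely many such $V_{y}$'s, and their union (intersected with a compactly contained open set) serves as $W$.

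\emph{Points in the relative interior of a maximal polyhedron.} Fix a polyhedral complex $\Pi$ on $X$. For an $n$-dimensional polyhedron $\sigma\in\Pi$ contained in $U$, the set $\relint(\sigma)$ is open in $U$ because $X$ has pure dimension $n$, so both $C'\cap\relint(\sigma)$ and $C''\cap\relint(\sigma)$ are dense in $\relint(\sigma)$. By Proposition~\ref{prop:1}, each $f_{i}|_{\sigma}$ is concave in the classical sense. Applying \cite[Theorem~10.6]{ROCK} to the family of convex functions $\{-f_{i}|_{\relint(\sigma)}\}$, whose pointwise supremum $-\inf_{i}f_{i}$ is finite on the dense set $C'\cap\relint(\sigma)$, shows that $\{f_{i}\}$ is equi-Lipschitz on compact subsets of $\relint(\sigma)$ and that $\inf_{i}f_{i}$ is finite (and concave) there. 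Combining the equi-Lipschitz bound with the pointwise upper bound $\sup_{i}f_{i}(c)<\infty$ at any chosen $c\in C''\cap\relint(\sigma)$ gives the desired uniform $C^{0}$-bound on a compact neighborhood of any $y\in\relint(\sigma)$.

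\emph{Boundary points.} The main obstacle is a point $y$ lying on a lower-dimensional face $\tau\in\Pi$, since the equi-Lipschitz constants from the previous step degenerate as one approaches $\tau$ from any adjacent $n$-dimensional polyhedron $\sigma_{j}\succ\tau$. I would apply Lemma~\ref{lemm:3} twice. First, to obtain a uniform upper bound on $f_{i}(y)$: pick $c\in C''$ in the relative interior of some $\sigma_{j}\succ\tau$ close to $y$, and apply Lemma~\ref{lemm:3} centered at $c$, choosing the subdivision in its proof so that $y$ appears with positive weight $\nu_{0}>0$ in the resulting balanced convex combination and the remaining auxiliary points $x_{j}$ lie in relative interiors of maximal polyhedra; weak concavity then yields
\[
  f_{i}(y)\le \nu_{0}^{-1}\Bigl(f_{i}(c)-\sum_{j>0}\nu_{j}f_{i}(x_{j})\Bigr),
\]
which is a uniform upper bound in $i$, because $f_{i}(c)$ is bounded via $c\in C''$ and each $f_{i}(x_{j})$ is controlled by the interior case. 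Second, applying Lemma~\ref{lemm:3} centered at $y$ with the auxiliary points again arranged in the relative interiors of the $\sigma_{j}$'s, and invoking weak concavity, produces a matching uniform lower bound on $f_{i}(y)$. The same constructions, applied with $y$ replaced by nearby points, give uniform bounds on a full neighborhood $V_{y}$. The delicate technical point is to control the weights $\nu_{j}$ uniformly away from zero and to verify that the auxiliary points can indeed be arranged inside the relative interiors of adjacent maximal polyhedra, which is achieved by a suitable refinement of the polyhedral complex inside the construction of Lemma~\ref{lemm:3}.
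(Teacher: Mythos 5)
Your reduction to uniform boundedness via Theorem~\ref{thm:3} and your treatment of points in the relative interior of a maximal polyhedron coincide in substance with the paper's proof. Where you diverge is the handling of boundary points, and there the proposal has a genuine gap.

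The paper's route avoids the direct construction you attempt. For the uniform lower bound, it sets $f=\inf_{i}f_{i}$, shows $f>-\infty$ first on relative interiors of maximal polyhedra (Rockafellar 10.6) and then on all of $U$ by a single balanced convex combination whose auxiliary points lie in such interiors. It then invokes Proposition~\ref{prop:23} (the infimum of weakly concave functions is weakly concave) and Theorem~\ref{thm:2} (weakly concave functions are continuous) to conclude that $f$ is continuous, hence bounded below on any compact subset. For the upper bound, it enlarges $K$ to a finite union of maximal polyhedra and runs the classical Rockafellar argument on each, needing only a single point $x\in C''\cap\relint(\tau)$ together with the already-established lower bound. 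No balanced convex combination centered at a boundary point is ever needed on the upper-bound side, and on the lower-bound side the combination is used only to propagate pointwise finiteness, not to extract uniform estimates. The key device you are missing is precisely this: once $\inf_{i}f_{i}>-\infty$ everywhere, Proposition~\ref{prop:23} and Theorem~\ref{thm:2} deliver the uniform lower bound for free.

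By contrast, your boundary argument carries two unresolved burdens. First, the claim that the auxiliary points produced by Lemma~\ref{lemm:3} can be forced into relative interiors of $n$-dimensional polyhedra is not justified and is in fact not what the lemma's construction gives: the $x_{i}$'s there are vertices of $1$-dimensional cells of the subdivision $S(\Pi,g)$, which generically land on lower-dimensional faces of $\Pi$ (for instance, on the rays of a fan surrounding a vertex). So you cannot assume the lower bounds from the interior case apply to them, and the argument becomes potentially circular: bounding $f_{i}(y)$ for a boundary $y$ appeals to bounds at auxiliary points that may themselves be boundary points. Second, the quantitative requirements you correctly flag as ``delicate'' --- keeping the central weight $\nu_{0}$ bounded away from zero as $y$ ranges over a neighborhood, and keeping the auxiliary points in a fixed compact set where the interior bounds are uniform --- are not discharged, and they are genuinely harder than what the paper's route requires, precisely because you want uniform estimates out of the combination rather than just pointwise finiteness. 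As written, the proposal does not establish the uniform bound on a neighborhood of a boundary point, so the proof is incomplete.
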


\begin{proof} Let $K\subset U$ be a compact subset and $V\subset U$ an
  open subset with compact closure containing $K$. By Theorem
  \ref{thm:3}, if the family $\{f_{i}\}_{i\in I}$ is uniformly bounded
  on $V$ then it is equi-Lipschitz in $K$. So we only need to prove
  that this family is uniformly bounded on $V$.

  For $x\in U$ set $f(x)=\inf_{i}f_{i}(x) \in \R\cup \{-\infty\}$.
  Let $\Pi $ be a polyhedral complex on $X$. For every
  $\tau \in \Pi(n) $, the functions $f_{i}|_{\tau }$ are concave in
  the classical sense and since $C'$ is dense in $\tau$, \cite[Theorem
  10.6]{ROCK} implies that $f(x)>-\infty $ for $x\in \relint(\tau)$.

  For $x\in X$, there is a balanced convex combination
  $ (x,(x_{i}),(\nu _{i}))$ such that the points $x_{i}$ belong
  to the relative interior of a polyhedron in $\Pi(n)$. Therefore
  \begin{displaymath}
    f(x)\ge \sum_{i}\nu _{i}\, f(x_{i}) > -\infty.
  \end{displaymath}
  By Proposition \ref{prop:23}, $f$ is a weakly concave function on
  $U$ and by Theorem \ref{thm:2} it is continuous on $U$. Hence $f$ is
  uniformly bounded in $K$, which implies that the family
  $\{f{i}\}_{i\in I}$ is uniformly bounded below in $K$.

  Making $K$ bigger and $\Pi $ finer, we can assume that $K$ is a
  finite union of polyhedra of $\Pi $ of maximal dimension contained
  in $V$. For each $\tau\in \Pi(n)$ with $\tau \subset K$ there is a
  point $x\in C''\cap \relint(\tau )$.  We then follow the proof of
  \cite[Theorem 10.6]{ROCK}. Choose $\varepsilon $ such that
  $x + \varepsilon \, \B\subset \tau $, where $\B$ denotes the unit
  ball. Since $x\in C''$, there is $\alpha \in \R$ such that
  $f_{i}(x)\le \alpha $ for all $i$. Moreover, since we have already
  proved the existence of a uniform lower bound, there is also
  $\beta \in \R$ such that $f(z)\ge \beta $ for all $z\in \tau $.

  Let now $y\in \tau$ with $y\not = x$. Set
  \begin{displaymath}
    z= x + \frac{\varepsilon}{\|x-y\|}(x-y) \and
    \lambda = \frac{\varepsilon}{\varepsilon +\|x-y\|}.
  \end{displaymath}
  Then $z\in x + \varepsilon \, \B\subset \tau$ and $x=(1-\lambda
  )z+\lambda y$. Hence, by concavity,
  \begin{displaymath}
    \alpha \ge f_{i}(x)\ge (1-\lambda) f_{i}(z)+\lambda f_{i}(y) \ge
    (1-\lambda )\beta + \lambda f_{i}(y).
  \end{displaymath}
  Thus
  \begin{displaymath}
    f_{i}(y)\le \frac{\alpha -(1-\lambda )\beta}{\lambda}=\alpha +\frac{\|x-y\|(\alpha -\beta )}{\beta} .  
  \end{displaymath}
  Note that the term on the right hand side is a continuous function
  with respect to $y$ on the compact set $\tau $. Therefore there is a
  constant $\gamma _{\tau }$ such that for all $i\in I$ and
  $y\in \tau $ we have that $f_{i}(y)\le \gamma _{\tau }$. Since $K$
  is compact, the number of maximal polyhedra contained in $K$ is
  finite. Hence, we obtain a uniform upper bound on the whole of $K$
  which concludes the proof.
\end{proof}

Theorem \ref{thm:conv1} has the following important consequence,
which is an analogue of \cite[Theorem 10.8]{ROCK} on polyhedral spaces.

\begin{thm}\label{thm:4}
  Let $(f_{i})_{i\ge 0}$ be a sequence of weakly concave functions on
  $U$ and $C\subset U$ a dense subset such that, for all $x\in C$,
  the sequence $f_{i}(x)$ converges to a number $f(x) \in \R$. Then
  the sequence $(f_{i})_{i\ge 0}$ converges to a function $f$ on the
  whole $U$, the limit function $f$ is weakly concave, and the
  convergence is uniform on compact subsets.
\end{thm}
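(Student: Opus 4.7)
The plan is to deduce Theorem~\ref{thm:4} from Theorem~\ref{thm:conv1} via a standard Arzelà--Ascoli type argument, and then to verify weak concavity of the limit by passing to the limit in the defining inequalities.

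First I would apply Theorem~\ref{thm:conv1} with $C' = C'' = C$. The hypothesis that $f_i(x)$ converges to $f(x) \in \R$ for every $x \in C$ implies in particular that both $\inf_i f_i(x)$ and $\sup_i f_i(x)$ are finite on the dense subset $C$. Hence the family $\{f_i\}_{i \ge 0}$ is uniformly bounded and equi-Lipschitz on every compact subset $K \subset U$. Let $d$ be a polyhedral metric on $K$ (which exists by Lemma~\ref{lemm:7}) and let $L \ge 0$ be a common Lipschitz constant for all $f_i|_K$ with respect to $d$.

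Next I would promote pointwise convergence on $C$ to uniform convergence on any such $K$. Given $\varepsilon > 0$, equi-Lipschitzness lets me choose a finite subset $\{y_1, \dots, y_m\} \subset C \cap V$ (where $V$ is an open neighborhood of $K$ with compact closure in $U$, in which $\{f_i\}$ is still equi-Lipschitz and $C$ is dense) such that every $x \in K$ lies within $d$-distance $\varepsilon/(3L)$ of some $y_j$. Since $f_i(y_j) \to f(y_j)$ for each $j$ and there are finitely many, there is $N$ such that $|f_i(y_j) - f(y_j)| < \varepsilon/3$ for all $i \ge N$ and all $j$. The triangle inequality
\[
|f_i(x) - f_{i'}(x)| \le |f_i(x) - f_i(y_j)| + |f_i(y_j) - f_{i'}(y_j)| + |f_{i'}(y_j) - f_{i'}(x)|
\]
together with the equi-Lipschitz bound then makes $(f_i(x))_{i \ge 0}$ uniformly Cauchy on $K$, so it converges uniformly to a function which I also denote $f$; in particular $f$ is defined pointwise on all of $U$ and the convergence is uniform on compact subsets. (The value of $f$ on $C$ is consistent with the hypothesized limit since pointwise limits are unique.)

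Finally, to see that $f$ is weakly concave on $U$, let $(x, (x_j)_{j \in I}, (\nu_j)_{j \in I})$ be any balanced convex combination in $U$ (in the sense of Definition~\ref{def:5}). By hypothesis, for every $i$ we have
\[
f_i(x) \ge \sum_{j \in I} \nu_j \, f_i(x_j).
\]
The finite set $\{x\} \cup \{x_j : j \in I\}$ is contained in a compact subset of $U$, so $f_i \to f$ on this set (in fact uniformly, but pointwise suffices here). Taking $i \to \infty$ on both sides yields
\[
f(x) \ge \sum_{j \in I} \nu_j \, f(x_j),
\]
which is the weak concavity inequality. As the balanced convex combination was arbitrary, $f$ is weakly concave on $U$ by Definition~\ref{def:1}, completing the proof.

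The only nontrivial step is the Arzelà--Ascoli style passage from pointwise convergence on $C$ to uniform convergence on compacts, but this is entirely standard once Theorem~\ref{thm:conv1} provides the equi-Lipschitz bound; all the real analytic work was already carried out in establishing Theorem~\ref{thm:3} and Theorem~\ref{thm:conv1}.
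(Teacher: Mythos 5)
Your proposal is correct and follows essentially the same route as the paper: both deduce equi-Lipschitzness on compacts from Theorem~\ref{thm:conv1} and then run the standard argument promoting pointwise convergence on a dense subset to uniform convergence on compacts, closing with a limit passage in the weak-concavity inequality. The paper simply cites Rockafellar's Theorem 10.8 for this last portion, whereas you spell out the Arzel\`a--Ascoli style details explicitly.
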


\begin{proof}
  By Theorem \ref{thm:conv1}, the family $(f_{i})_{i\ge 0}$ is
  equi-Lipschitz on each compact subset of $U$. Therefore the
  statement can be proven as in \cite[Theorem
  10.8]{ROCK}.
\end{proof}

On the same spirit, other results of \cite[Chapter 10]{ROCK} can be
translated to the current setting. For instance we particularize the
analogue of \cite[Theorem~10.7]{ROCK}. It is a consequence of theorems
\ref{thm:3} and \ref{thm:conv1}, its proof is similar to the one in
\emph{loc. cit.} and is left to the reader.

\begin{thm}\label{thm:conv2}
  Let $T$ be a locally compact topological space. If
  $f\colon U\times T\to \R$ is a function such that $f(\cdot ,t)$ is
  weakly concave for each $t\in T$ and $f(x,\cdot )$ is continuous for
  each $x\in U$, then $f$ is continuous on $U\times T$.
\end{thm}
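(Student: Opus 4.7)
\textbf{Proof plan for Theorem \ref{thm:conv2}.} The plan is to reduce continuity of $f$ at a point $(x_{0},t_{0})\in U\times T$ to a standard ``equi-Lipschitz plus one-variable continuity'' estimate, exactly as in the classical argument for concave functions on $\R^{n}$. The key input will be Theorem \ref{thm:conv1}, applied to the family $\{f(\cdot,t)\}_{t\in W}$ for a suitable compact neighborhood $W$ of $t_{0}$.

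First, using local compactness of $T$, choose a compact neighborhood $W$ of $t_{0}$. For every $x\in U$ the function $f(x,\cdot)$ is continuous on the compact set $W$, hence bounded there; in particular, both $\inf_{t\in W}f(x,t)$ and $\sup_{t\in W}f(x,t)$ are finite for \emph{every} $x\in U$. Since each $f(\cdot,t)$ is weakly concave on $U$ by hypothesis, Theorem \ref{thm:conv1} (applied with index set $W$ and $C'=C''=U$) yields that the family $\{f(\cdot,t)\}_{t\in W}$ is uniformly bounded and equi-Lipschitz on every compact subset of $U$.

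Now pick a compact neighborhood $K\subset U$ of $x_{0}$ equipped with a polyhedral metric $d$ (whose existence is granted by Lemma \ref{lemm:7}). The previous step provides a constant $L>0$ such that
\begin{equation*}
|f(x,t)-f(y,t)|\le L\, d(x,y)
\end{equation*}
for every $x,y\in K$ and every $t\in W$. Given $\varepsilon>0$, use the continuity of $f(x_{0},\cdot)$ at $t_{0}$ to find an open neighborhood $W'\subset W$ of $t_{0}$ with $|f(x_{0},t)-f(x_{0},t_{0})|<\varepsilon/2$ for $t\in W'$, and choose an open neighborhood $V\subset K$ of $x_{0}$ with $d(x,x_{0})<\varepsilon/(2L)$ for $x\in V$. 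Splitting
\begin{equation*}
|f(x,t)-f(x_{0},t_{0})|\le |f(x,t)-f(x_{0},t)|+|f(x_{0},t)-f(x_{0},t_{0})|
\end{equation*}
and bounding each term gives $|f(x,t)-f(x_{0},t_{0})|<\varepsilon$ on $V\times W'$, proving continuity at $(x_{0},t_{0})$.

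There is no real obstacle here: the only delicate step is verifying that the hypotheses of Theorem \ref{thm:conv1} hold for the family $\{f(\cdot,t)\}_{t\in W}$, and this is precisely where local compactness of $T$ is used to confine the parameter $t$ to a compact set on which the continuous slices $f(x,\cdot)$ are automatically bounded. Everything else is a direct transcription of the classical Rockafellar argument, with Theorems \ref{thm:3} and \ref{thm:conv1} replacing their Euclidean counterparts.
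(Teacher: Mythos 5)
Your argument is correct and follows exactly the route the paper has in mind: the paper's proof of Theorem \ref{thm:conv2} is the one‑line remark that it follows from Theorems \ref{thm:3} and \ref{thm:conv1} by the same argument as \cite[Theorem~10.7]{ROCK}, and your write-up is precisely that argument spelled out. You use local compactness of $T$ to confine $t$ to a compact $W$, observe that continuity of $f(x,\cdot)$ on the compact $W$ gives finiteness of $\inf_{t\in W}f(x,t)$ and $\sup_{t\in W}f(x,t)$ at every $x\in U$ (so \ref{thm:conv1} applies with $C'=C''=U$), and then split $|f(x,t)-f(x_0,t_0)|$ into an equi-Lipschitz term and a one-variable continuity term. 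This is exactly Rockafellar's proof transcribed to the polyhedral setting, with Theorem \ref{thm:conv1} playing the role of \cite[Theorem 10.6]{ROCK}, so there is no meaningful difference from the intended proof.
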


% \bibliographystyle{amsalpha}
% \bibliography{bibliography}

% \end{document}

\providecommand{\bysame}{\leavevmode\hbox to3em{\hrulefill}\thinspace}
\providecommand{\MR}{\relax\ifhmode\unskip\space\fi MR }
% \MRhref is called by the amsart/book/proc definition of \MR.
\providecommand{\MRhref}[2]{%
  \href{http://www.ams.org/mathscinet-getitem?mr=#1}{#2}
}
\providecommand{\href}[2]{#2}

\end{document}